\documentclass{article}
\usepackage{amsmath,amssymb,amsthm,graphicx,epsfig,float,url}
\usepackage{pdfsync}
\usepackage[usenames, dvipsnames]{xcolor}
\usepackage{tikz}
\usepackage{subfig}
\usepackage{bbm}
\usepackage{stmaryrd}
\usepackage{mathrsfs}  
\usepackage{caption}
\usepackage{float}
\usepackage[makeroom]{cancel}
\usetikzlibrary{patterns}
\topmargin -1cm
\textheight 21cm
\textwidth 15cm 
\oddsidemargin 1cm

\newcommand{\R}{\textnormal{I\kern-0.21emR}}
\newcommand{\N}{\textnormal{I\kern-0.21emN}}

\newcommand{\A}{\mathcal{A}}

\renewcommand{\geq}{\geqslant}
\renewcommand{\leq}{\leqslant}

\def\B{{\mathbb B}}
\def\e{{\varepsilon}}
\allowdisplaybreaks

\usepackage[dvipsnames]{xcolor}

\newtheorem*{theorem*}{Theorem}
\newtheorem{conjecture}{Conjecture}  

\newtheorem{theorem}{Theorem}  
\newtheorem{proposition}{Proposition}

\newtheorem{definition}{Definition}
\newtheorem{lemma}{Lemma}
\newtheorem{claim}{Claim}

\theoremstyle{definition}\newtheorem{remark}{Remark}

\def\O{{\Omega}}
\def\n{{\nabla}}
\def\p{{\varphi}}

\usepackage{xargs}
 \usepackage[colorinlistoftodos,textsize=small]{todonotes}
 \newcommandx{\unsure}[2][1=]{\todo[linecolor=red,backgroundcolor=red!25,bordercolor=red,#1]{#2}}
 \newcommandx{\change}[2][1=]{\todo[linecolor=blue,backgroundcolor=blue!25,bordercolor=blue,#1]{#2}}
 \newcommandx{\info}[2][1=]{\todo[linecolor=green,backgroundcolor=green!25,bordercolor=green,#1]{#2}}
 \newcommandx{\improvement}[2][1=]{\todo[linecolor=yellow,backgroundcolor=yellow!25,bordercolor=yellow,#1]{#2}}
 
  \newcommandx{\biblio}[2][1=]{\todo[linecolor=blue,backgroundcolor=magenta!25,bordercolor=blue,#1]{#2}}

\begin{document}
\nocite{*}
\title{Quantitative inequality for the eigenvalue of a Schr\"{o}dinger operator in the ball }


\author{Idriss Mazari\footnote{Sorbonne Universit\'es, UPMC Univ Paris 06, UMR 7598, Laboratoire Jacques-Louis Lions, F-75005, Paris, France (\texttt{idriss.mazari@sorbonne-universite.fr})}}
\date{\today}

\maketitle

\begin{abstract}
The aim of this article is to  prove a  quantitative inequality for the first eigenvalue of a Schr\"{o}dinger operator in the ball. More precisely, we optimize the first eigenvalue $\lambda(V)$ of the operator $\mathcal L_v:=-\Delta-V$ with Dirichlet boundary conditions with respect to the potential $V$, under $L^1$ and $L^\infty$ constraints on $V$. The solution has been known to be the characteristic function of a centered ball, but this article aims at proving a sharp growth rate of the following form: if $V^*$ is a minimizer, then $\lambda(V)-\lambda(V^*)\geq C ||V-V^*||_{L^1(\O)}^2$ for some $C>0$.
\\The proof relies on two notions of derivatives for shape optimization:  parametric derivatives and shape derivatives. We use parametric derivatives to handle radial competitors, and shape derivatives to deal with normal deformation of the ball. A dichotomy is then established to extend the result to all other potentials. We develop a new method to handle radial distributions and a comparison principle to handle second order shape derivatives at the ball. Finally, we add some remarks regarding the coercivity norm of the second order shape derivative in this context.
\end{abstract}

\noindent\textbf{Keywords:} Stability for eigenvalues, Schr\"{o}dinger operator, mathematical biology.

\medskip

\noindent\textbf{AMS classification:} 35J15,35Q93,47A75,49R05,49Q10  .
\\
\medskip

\tableofcontents
\section{Introduction}
\subsection{Structure of the paper}
In the first part (Section \ref{Ss:Setting}) of the introduction, we  lay out the mathematical setting of this article, the main results and give the relevant definitions. In the second part of the Introduction, we give bibliographical references concerning quantitative inequalities and the biological motivations this study stems from. We then explain, in Subsection \ref{Ss:StructureProof}, how the proof differs from that of other quantitative inequalities. 
\\The core of this paper is devoted to the proof of Theorem \ref{Th:Quanti}. In the conclusion, we give a conjecture and a final comment on a possible  way to obtain an optimal exponent  using parametric derivatives.
\subsection{Mathematical setting}\label{Ss:Setting}
The optimization of eigenvalues of elliptic operators defined on domains with a zero-order term (i.e with a potentiel) with respect to either the domain (with a fixed potential defined on a bigger domain) or the potential (with a fixed domain) is a classical question in optimization under partial differential equations constraints. The example under scrutiny here is the operator 
\begin{equation}\label{Eq:Operator}\mathcal L_V:u\in W^{2,2}_0(\O)\mapsto-\Delta u-Vu,\end{equation} where $\O$ is a smooth domain. Under the assumption that $V\in L^\infty(\O)$, this operator is known to have a first, simple eigenvalue, denoted by $\lambda(V)$, and associated with an eigenfunction $u_V$, which solves 
\begin{equation}\label{Eq:EigenFunction}
\left\{\begin{array}{ll}
-\Delta u_V-Vu_V=\lambda(V)u_V\text{ in }\O,&
\\u_V=0\text{ on }\partial \O.&
\\\int_\O u_V^2=1.
\end{array}\right.
\end{equation}
Alternatively, this eigenvalue admits the following variational formulation in terms of Rayleigh quotients:
\begin{equation}\label{Eq:Rayleigh}
\lambda(V)=\inf_{u\in W^{1,2}_0(\O)\, , \int_\O u^2=1}\left\{\int_\O |\n u|^2-\int_\O V u^2\right\}.\end{equation}
We make stronger assumptions on the potential $V$ and require that it lies in 
\begin{equation}\label{De:Admissible}
\mathcal M(\O):=\left\{V\in L^\infty(\O)\, , 0\leq V\leq 1\, , \frac1{|\O|}\int_\O V=V_0\right\},\end{equation}
where $V_0>0$ is a real parameter such that
$$V_0<1$$ so that $\mathcal M(\O)$ is non-empty.
The optimization problem we focus on in this paper is
\begin{equation}\label{Eq:PvG}
\inf_{V\in \mathcal M(\O)}\lambda(V).\end{equation}
This problem has drawn a lot of attention from the mathematical community over the last decades, and is quite a general one. It is particularly relevant in the context of mathematical biology, see Section \ref{Su:Biology}.

\subsection{Main results}

\subsubsection{Notations}
Here and throughout, the underlying domain is $\O=\mathbb B(0;R)$. 
\\The parameter $r^*$ is chosen so that 
$$V^*:=\chi_{\mathbb B(0;r^*)}=\chi_{E^*}\in \mathcal M(\O).$$
The constant $c_n>0$ is the $(n-1)$-dimensional volume of the unit sphere in dimension $n$.
\\For any $E\subset \B$, we define $\lambda(E):=\lambda(\chi_E)$.

\subsubsection{Quantitative inequality}
A classical application of Schwarz' rearrangement  shows  that $V^*$ is the unique minimizer of $\lambda$ in $\mathcal M(\O)$:
$$\forall V\in \mathcal M(\O)\, , \lambda(V)\geq \lambda(V^*).$$
We refer to \cite{Kawohl} for an introduction to the Schwarz rearrangement and to \cite{LamboleyLaurainNadinPrivat} for its use in this context. For the sake of completeness, we also prove this result in Annex \ref{An:Schwarz}.
\\The goalf of this paper is to establish the following quantitative spectral inequality:
\begin{theorem}\label{Th:Quanti} Let $n=2$.
There exists a constant $C>0$ such that, for any $V\in \mathcal M(\O)$, there holds
\begin{equation}\label{Eq:Quantitative}
\lambda(V)-\lambda(V^*)\geq C ||V-V^*||_{L^1}^2.\end{equation}\end{theorem}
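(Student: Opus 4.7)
My approach follows the compactness-dichotomy scheme standard for quantitative inequalities, with the local analysis split according to the two derivative calculi announced in the introduction.

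\emph{Compactness reduction.} Since $\lambda$ is continuous on $\mathcal{M}(\Omega)$ equipped with the weak-$*$ topology of $L^\infty$ and $V^*$ is the unique minimizer (Schwarz rearrangement, Annex~\ref{An:Schwarz}), for every $\delta>0$ there exists $c_\delta>0$ such that $\lambda(V)-\lambda(V^*)\ge c_\delta$ whenever $\|V-V^*\|_{L^1}\ge\delta$. Combined with the universal bound $\|V-V^*\|_{L^1}\le 2|\Omega|$, this yields \eqref{Eq:Quantitative} for such $V$; it therefore suffices to prove \eqref{Eq:Quantitative} in the regime $\|V-V^*\|_{L^1}<\delta$ for a sufficiently small $\delta>0$.

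\emph{Second-order analysis on the two natural families.} I would next treat two special classes. For \emph{radial} $V\in\mathcal{M}(\Omega)$, the convex segment $V_t:=V^*+t(V-V^*)$ stays in $\mathcal{M}(\Omega)$ by convexity of $\mathcal{M}(\Omega)$, and Taylor-expanding $\lambda(V_t)$ at $t=0$ produces
\begin{equation*}
\lambda(V)-\lambda(V^*)\;\ge\;\dot\lambda(V^*)[V-V^*]+\tfrac{1}{2}\ddot\lambda(V^*)[V-V^*,V-V^*]+R,
\end{equation*}
the first-order term being nonnegative by the optimality of $V^*$ and $R$ controlled by $C^{1,1}$ bounds along the segment. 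The task then reduces to establishing coercivity of the Hessian on radial zero-mean perturbations in the norm dictated by the $L^1$ right-hand side, which I expect to be accessible through the explicit 1D radial ODE satisfied by $u_{V^*}$. For \emph{characteristic} competitors $V=\chi_E$ whose boundary $\partial E$ is a normal graph $r^*+\phi(\theta)$ over $\partial E^*$, I would expand via the second shape derivative at $E^*$: this yields a quadratic form in $\phi$ bounded below by $c\|\phi\|_{L^2(\partial E^*)}^2$, the sign coming from $(u_{V^*}^2)'(r^*)<0$. Cauchy--Schwarz on the circle then gives $\|V-V^*\|_{L^1}^2\simeq (r^*)^2\|\phi\|_{L^1}^2\le C\|\phi\|_{L^2}^2$, closing the shape estimate.

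\emph{Dichotomy and main obstacle.} For a general $V$ close to $V^*$, let $V^{\#}$ denote its Schwarz rearrangement. By the triangle inequality, at least one of $\|V^{\#}-V^*\|_{L^1}$ and $\|V-V^{\#}\|_{L^1}$ exceeds $\tfrac12\|V-V^*\|_{L^1}$. In the first case, the classical bound $\lambda(V)\ge\lambda(V^{\#})$ combined with the radial estimate concludes. The hard case is the second, where $V$ carries essential non-radial content: here a \emph{quantitative} rearrangement inequality of the form $\lambda(V)-\lambda(V^{\#})\ge c\|V-V^{\#}\|_{L^1}^2$ is needed, which does not follow from classical symmetrization. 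This is the step where I expect the author's ``comparison principle'' for the second-order shape derivative at the ball to play a central role, possibly combined with a bang-bang truncation reducing non-characteristic competitors to characteristic ones at controlled $L^1$ cost. The restriction to $n=2$ should enter through the critical Sobolev embedding $H^{1/2}(\partial E^*)\hookrightarrow L^p(\partial E^*)$ for all $p<\infty$, which appears essential to close the $L^1$-coercivity estimate in the shape step.
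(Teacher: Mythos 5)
There are two genuine gaps, one of which is a structural error rather than a missing detail. First, your radial step cannot work as stated: the map $V\mapsto\lambda(V)$ is \emph{concave}, being the infimum over $u$ of the affine functionals $V\mapsto\int_\O|\n u|^2-\int_\O Vu^2$ in \eqref{Eq:Rayleigh}. Hence the second-order parametric derivative satisfies $\ddot\lambda(V^*)[h,h]\leq 0$ for every direction $h$, and no coercivity of the Hessian ``in the norm dictated by the $L^1$ right-hand side'' can hold. All of the positivity must be extracted from the \emph{first-order} term. The paper's Step 3 does exactly this: it first characterizes the minimizer of the constrained radial problem as an annular shell $\mathbb A_\delta$ concentrated at distance $O(\delta)$ from $\partial\B^*$, then writes $\lambda(V)-\lambda(V^*)=\dot\lambda_{\delta,t_1}=-\int_\B h_\delta\,u_{\delta,t_1}^2$ by the mean value theorem, and shows via the Taylor expansion $u^2(x)=u^2|_{\mathbb S^*}\mp 2u|\n u|\,\mathrm{dist}(x,\mathbb S^*)+o(\mathrm{dist})$ (the zeroth-order term cancels because $\int h_\delta=0$ and $u_*$ is constant on $\mathbb S^*$) that this first-order quantity is itself bounded below by $C\delta^2$, uniformly along the segment. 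Your plan of ``first-order term nonnegative by optimality, then Hessian coercivity'' discards precisely the term that carries the quadratic gain and relies on a term with the wrong sign.

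Second, your dichotomy leaves the hard case unproved, and the needed ingredient is not the one you point to. A quantitative symmetrization inequality $\lambda(V)-\lambda(V^{\#})\geq c\|V-V^{\#}\|_{L^1}^2$ for arbitrary $V$ is not established in the paper and is not what the comparison principle delivers (that principle only serves to prove $L^2(\partial\B^*)$-coercivity of $L_\tau''$ at the ball --- note, incidentally, that the coercivity norm here is $L^2$ and not $H^{1/2}$, and the restriction $n=2$ enters through the Fourier diagonalization and H\"older convergence of eigenfunctions, not through a Sobolev embedding). The paper circumvents the need for a quantitative P\'olya--Szeg\H{o}-type estimate by never working with a generic $V$: it introduces the auxiliary minimizer $\mathcal V_\delta$ of $\inf_{\mathcal M_\delta}\lambda$, whose bathtub-principle level-set structure $\{\mathcal V_\delta=1\}=(\{u_\delta\geq\mu_\delta\}\cap\B^*)\cup(\{u_\delta\geq\eta_\delta\}\cap(\B^*)^c)$ permits a dichotomy on the ratio $f(\delta)/\delta$, where $f(\delta)$ measures the defect from being a single level set: when $f(\delta_k)/\delta_k\to\ell>0$ one compares with the annulus $\mathbb A_{f(\delta_k)}$ via bathtub plus Schwarz rearrangement and invokes the radial estimate, and when $f(\delta_k)/\delta_k\to 0$ one shows the level set $\{u_{\delta_k}>\zeta_{\delta_k}\}$ is a $\mathscr C^1$ normal graph over $\partial\B^*$ with uniformly bounded curvature and invokes the shape estimate. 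Without this reduction to the auxiliary minimizer (or an independent proof of your quantitative rearrangement inequality), your scheme does not close.
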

\color{black}
\begin{remark}
The hypothesis $n=2$ is essentially needed in Step 4 of the proof, where we gather estimates on second order shape derivatives and thus need to diagonalise the shape Hessian using Fourier series.  This part of the proof can be adapted to dimension 3 using spherical harmonics, but we choose to present it in this low dimension context for the sake of readability. Furthermore, in Step 5, we need convergence of the eigenfunctions in H\"{o}lder spaces and can obtain it in dimensions 2 and 3 in a straighforward manner.
\end{remark}
\color{black}
We can rephrase this result in terms of Fraenkel asymmetry: as we will explain in Subsection \ref{Su:Quanti}, this is the natural property  to expect in the context of quantitative inequalities. Indeed, if $V=\chi_E\in \mathcal M(\O)$, then 
$$||V-V^*||_{L^1}=\left|E^*\Delta E\right|$$ where $\Delta$ stands for the symmetric difference, and, if we define the fraenkel asymmetry of $E$ as 
$$\mathcal A(E):=\inf_{\mathbb B(x;r)\, , \chi_{\B(x;r)}\in \mathcal M(\O)}|E\Delta \B(x;r)|$$ then it follows from Theorem \ref{Th:Quanti} that 
$$\lambda(\chi_E)-\lambda(\chi_{E^*}) \geq C \mathcal A(E)^2.$$
We thus have a parametric version of a quantitative inequality with what is in fact a sharp exponent.

\subsubsection{A comment on parametric and shape derivatives}\label{Su:Difference}
The proof of Theorem \ref{Th:Quanti} relies on parametric and shape derivatives, and the aim of this Section is to give possible links between the two notions and, most notably, to give a situation where this link is no longer possible. This is obviously in sharp contrast with classical shape optimization, since here we are only optimizing with respect to the potential, while it is customary to derive Faber-Krahn type inequalities, i.e to optimize with respect to the domain $\O$ itself, see Subsection \ref{Su:Quanti}.
\\Roughly speaking, there are two ways to tackle spectral optimizatoin problems such as \eqref{Eq:PvG}: the parametric approach and the shape derivative approach. By parametric approach we mean the following: 
\begin{definition}
We define, for any $V\in \mathcal M(\O)$, the tangent cone to $\mathcal M(\O)$ at $V$ as
$$\mathcal T_V:=\left\{h:\O\rightarrow [-1;1]\, , \forall \e\leq 1\, ,  V+\e h\in \mathcal M(\O)\right\}$$ and define, provided it exists, the parametric  derivative of $\lambda$ at $V$ in the direction $h \in \mathcal T_V$ as 
$$\dot \lambda (V)[h]:=\lim_{t\to 0}\frac{\lambda(V+th)-\lambda(V)}t.$$ \end{definition}
In this case, the optimality condition reads
$$\forall h \in \mathcal T_V\, , \dot \lambda(V)[h]\geq 0.$$ In the conclusion of this article, we will explain why there holds
$$\forall h \in \mathcal T_{V^*}\, , \dot \lambda (V^*)[h]\geq C||h||_{L^1}^2.$$
Since, for any $V \in \mathcal M(\O)$, $h:=V-V^*\in \mathcal T_{V^*}$, what we get is 
$$\dot \lambda(V^*)[V-V^*]\geq C ||V-V^*||_{L^1}^2$$ that is, an infinitesimal version of Estimate \eqref{Eq:Quantitative}, and one might wonder whether such infinitesimal estimates might lead to global qualitative inequality of the type \eqref{Eq:Quantitative}. This is customary, in the context of shape derivatives. We need to define the notion of shape derivative before going further: 
\begin{definition}\label{De:ShapeStab}
Let $\mathcal F:E\mapsto \mathcal F(E)\in \R$ be a shape functional. We define
$$\mathcal X_1(V^*):=\left\{\Phi:\B(0;R)\rightarrow \R^2\, ,||\Phi||_{W^{1,\infty}}\leq 1\, , \forall t \in(-1;1)\, , \chi_{(Id+t\Phi)(\B^*)}\in \mathcal M(\O).\right\}$$as the set of admissible perturbations at $E^*$.
The shape derivative of first (resp. second) order of a shape function $\mathcal F$ at $V^*$ in the direction $\Phi$ is 
\begin{multline}
\mathcal F'(E^*)[\Phi]=\lim_{t\to 0}\frac{\mathcal F\Big((Id+t\Phi)E^*\Big)-\mathcal F(E^*)}t
\\\text{ (resp.}\mathcal F''(E^*)[\Phi,\Phi]:=\lim_{t^2\to 0}\frac{\mathcal F\Big((Id+t\Phi)E^*\Big)-\mathcal F(E^*)-\mathcal F'(E^*)(\Phi)}{t^2}.\text{)}\end{multline}
\end{definition}
A customary way to derive quantitative inequality is to show that, at a given shape $E$, there holds
$$\mathcal F'(E)[\Phi]=0\, , \mathcal F''(E)[\Phi,\Phi]>0$$ and to lift the last inequality to a quantitative inequality of the form
$$\mathcal F'(E)[\Phi,\Phi]\geq C||\Phi||_s^2$$ where $||\cdot||_s$ is a suitable norm; we refer to \cite{DambrineLamboley} for more details but for instance one might have $||\left.\Phi\right|_{\partial E}||_{L^1}^2$ which often turns out to be the suitable exponent for a quantitative inequality. This quantitative inequality for shape deformations is usually not enough, and we refer to Section \ref{Su:Quanti} for more details and bibliographical references.

\subsubsection{A remark on the proof}
 The main innovation of this paper is the proof of Theorem \ref{Th:Quanti} which, although it uses shape derivatives as is customary while proving quantitative inequalities, see Subsection \ref{Su:Quanti}, relies  heavily on parametric derivatives. This is allowed by the fact that we are working with a potential defined on the interior of the domain.
 \\Furthermore, we also prove that, unlike classical shape optimization, our coercivity norm for the second order shape derivative is the $L^2$ norm.

\subsection{Bibliographical references}
\subsubsection{Quantitative spectral inequalities}\label{Su:Quanti}
\vspace{0.2cm}
\paragraph{Spectral deficit for Faber-Krahn type inequalities:}
Quantitative spectral inequalities have received a lot of attention for a few decades, and are usually set in a context which is more general than the one introduced here. The main goal of such inequalities were to derive quantitative versions of the Faber-Krahn inequality: for a given parameter $\beta\in (0;+\infty]$ and a bounded domain $\O\subset \R^n$, consider the first eigenvalue $\eta_\beta(\O)$ of the Laplacian with Robin boundary conditions:
\begin{equation}\label{Eq:LaplaceRobin}
\left\{\begin{array}{ll}
-\Delta u_{\beta,\O}=\eta_\beta(\O)u_{\beta,\O}\text{ in }\O,&
\\\frac{\partial u_{\beta,\O}}{\partial \nu}+\beta u_{\beta,\O}=0\text{ on }\partial \O,&
\end{array}\right.
\end{equation} 
with the convention that $\eta_\infty(\O)$ is the first Dirichlet eigenvalue of the Laplace operator. Although it has been known since the independent works of Faber \cite{Faber} and Krahn \cite{Krahn} that, whenever $\O^*$ is a ball with the same volume as $\O$, there holds
$$\eta_\infty(\O)\geq \eta_\infty(\O^*),$$
the question of providing a sharp lower bound for the so-called \textit{spectral deficit} $$R_\infty(\O)=\eta_\infty(\O)-\eta_\infty(\O^*)$$ remained largely open until  Nadriashvili and Hansen \cite{NadirashviliHansen} and Melas, \cite{Melas}, using Bonnesen type inequalities, obtained a lower bound  on the spectral deficit involving  quantities related to the geometry of domain $\O$ through the inradius. In a later work, Brasco, De Philippis and Velichkov, \cite{BDPV}, the sharp version of the quantitative inequality, namely:
\begin{equation}R_\infty(\O)\geq C \mathcal A(\O)^2.\end{equation} Their proof relies uses as a first step a second order shape derivative argument, a series of reduction to small asymmetry regime and, finally, a quite delicate selection principle. We comment in the next paragraph on 
the role of second order shape derivative for generic quantitative inequalities and the part it plays in our proof.

For a survey of the history and proofs of quantitative Faber-Krahn inequalities, we refer to the survey \cite{BrascoDePhilippis} and the references therein. 

 For quantitative versions of spectral inequalities with general Robin boundary conditions, the Bossel-Daners inequality, first derived by Bossel in dimension 2 in \cite{Bossel} and later extended by Daners in all dimensions in \cite{Daners} reads:
$$R_\beta(\O):=\eta_\beta(\O)-\eta_\beta(\B)\geq 0\, , \beta>0,$$
and a quantitative version of the Inequality was proved by Bucur, Ferone, Nitsch and Trombetti in \cite{BFNT}:
$$R_\beta(\O)\geq C \mathcal A(\O)^2.$$ Their method for $\beta<\infty$ is different from the case of Dirichlet eigenvalue and relies on a free boundary approach.

\color{black}
\paragraph{Quantitative estimates for optimal potentials}
In the parametric context, that is, when optimising a criterion with respect to a potential, two references whose results are related to the one of the present paper are \cite{BrascoButtazzo} and \cite{CarlenLieb}; in both these papers, the $L^\infty$ constraint $0\leq V\leq 1$ we consider in the present paper is not considered and they mainly deal with $L^p$ constraints. Namely, in \cite{CarlenLieb}, the main result, in the two dimensional case, is the following: consider, for a parameter $\gamma>0$, a  non-postivie potential $V\in L^{1+\gamma}(\R^2)\,, V\leq 0$, the operator 
$$\mathfrak L_V:=-\Delta+V$$ and its first eigenvalue 
$$\mathfrak E(V):=\inf\left\{\int_{\R^2} |\n \psi|^2+\int_{\R^2} V \psi^2\,, \psi \in W^{1,2}(\R^2)\,, \int_{\R^2}\psi^2=1\right\}.$$
The relevant optimisation problem is then 
$$C_{\gamma}:=\sup\left\{\frac{|\mathfrak E(V)|}{\left(\int_{\R^2}V^{\gamma+\frac12}\right)^{\frac1\gamma}}\right\},$$
and the optimal class is 
$$\mathfrak M_\gamma:=\left\{V\,, \frac{|\mathfrak E(V)|}{\left(\int_{\R^2}V^{\gamma+\frac12}\right)^{\frac1\gamma}}=C_\gamma\,, V \leq 0\right\}.$$ The first result of \cite{CarlenLieb} is that $\mathfrak M_\gamma$ is non-empty. Furthermore they obtain many  stability estimates \cite[Theorem 2.2]{CarlenLieb}, one of which reads: there exists $c_\gamma>0$ such that, for any $\gamma>0$ such that $\gamma\leq \frac32$, for any $V\in L^{\gamma+\frac12}$, $V\leq 0$, 
\begin{equation}
\frac{|\mathfrak E(V)|}{\left(\int_{\R^2}V^{\gamma+1}\right)^{\frac1\gamma}}\leq C_\gamma-c_\gamma \inf_{W \in \mathfrak M_\gamma}\frac{\Vert V-W\Vert_{\gamma+1}}{\Vert V\Vert_{\gamma+1}^2}.\end{equation}
Their proof is strongly related to the stability of functional inequalities, such as the Gagliardo-Nirenberg-Sobolev or H\"{o}lder inequalities, and it is not clear to us that their methods can be used in the context we are presently considering.

In \cite{BrascoButtazzo}, a stability estimate for the Dirichlet energy with respect to the potential is obtained. One of their main theorems reads as follows \cite[Theorem B]{BrascoButtazzo}: let $\O$ be a smooth  domain in $\R^n$ and $f\in W^{-1,2}(\O)$. Let, for any potential $V$ enjoying some suitable integrability properties, $u_V\in W^{1,2}_0(\O)$ be the solution of
$$-\Delta u_V+V u_V=f$$ and define the associated Dirichlet energy
$$\mathcal E_f(V):=-\frac12 \int_\O |\n u_V|^2-\frac12\int_\O V u_V^2.$$ Define, for any $p\in (1;\infty)$, 
$$\mathfrak V_p:=\left\{V\in L^p(\O),\Vert V\Vert_{L^p}\leq 1\right\}.$$ In \cite{Buttazzo2014}, it was proved that a solution $V_0$ to the maximisation problem
$$\sup_{V\in \mathfrak V_p}\mathcal E_f(V)$$ exists, is unique and satisfies $\Vert V_0\Vert_{L^p}=1$. Then \cite[Theorem B]{BrascoButtazzo} reads: there exists $\sigma_p>0$ such that 
$$\forall V \in \mathfrak V_p\,, \mathcal E_f(V)\leq \mathcal E_f(V_0)-\sigma_p ||V-V_0||_p^2.$$
Here, the proof of this stability results relies on stability estimates for functional inequalities, and the optimality conditions are quite different from ours, as the optimality system in their case is a non-autonomous, semilinear equation. It is not clear to us that the constraints $0\leq V\leq 1$ we consider here, as well as the spectral quantity we are optimising, can be handled through the methods of \cite{BrascoButtazzo}.

\color{black}

\paragraph{The role of second order shape derivatives:} We only want to mention here the results we draw our inspiration from in Step 4 of the proof, and do not aim at giving out the rigorous mathematical setting of the results mentioned below. We refer to \cite{DambrineLamboley} for a thorough presentation of the link between second order shape derivatives, local shape stability and local quantitative inequalities.

As we said in the previous paragraph, most proofs of quantitative inequalities start with a local quantitative inequality for shape perturbation of the optimum $E_1$: namely, if $\mathcal F$ is a regular enough shape functional and $E_1$ is an admissible set such that 
\begin{equation}\label{Eq:Condition}\mathcal F (E_1)[\Phi]=0\, , \mathcal F''(E_1)[\Phi,\Phi]>0\end{equation} for any $\Phi \in \mathcal X_1(E_1)$ then it is proved in \cite{Dambrine}, \cite{DambrinePierre} that $E_1$ is a strict local minimizer in a $\mathscr C^{2,\alpha}$ neighbourhood of $E$ (actually, in these two articles, the authors assume a coercivity of the second order derivative in in $H^{\frac12}$ norm on $\Phi$). In \cite{DambrineLamboley}, Dambrine and Lamboley proved that the same conditions imply a local quantitative inequality under certain technical assumptions. Roughly speaking,  their result implies the following result: Condition \eqref{Eq:Condition} implies that if, for any function $h\in \mathscr C^0(\partial \O)\cap W^{2,\infty}(\partial \O)$ we define $E_1^h$ as the domain bounded by 
$$\partial E_1^h:=\left\{x+h(x)\nu_{E_1}(x)\, , x\in \partial E_1\right\}$$ then there exists $\ell>0$ such that, for any $||h||_{W^{2,\infty}(\partial \O)}\leq \ell$ there holds
$$\mathcal F(E_1)+C||h||_{L^1}^2\leq\mathcal F(E_1^h)$$ for some $C>0$.
\\Their result actually holds in stronger norm, but this is the version we wanted to mention here since it is the one we will adapt in our parametric setting. We note however, that the authors, in their problems, prove their inequalities for shape using a $H^{\frac12}$ coercivity norm for the second derivative: they usually have 
$$\mathcal F(E_1)''[h,h]\geq C||h||_{H^{\frac12}}^2.$$In this expression, we have defined $\mathcal F''(E_1)[h,h]:=\mathcal F(E_1)''[x+h\nu_{E_1}(x),x+h\nu_{E_1}(x)]$. Here, in Step 4, we will show, using comparison principles, that the optimal coercivity norm is the $L^2$ norm.
\paragraph{Difference with our proofs and contribution:}
\color{black}
As mentioned in the previous paragraphs, most of the existing literature deals with stability estimates in the context of shape optimisation or, when dealing with optimisation with respect to the potential, with $L^p$, $1<p<\infty$ constraints. We believe this article to be a first step in the context of $L^\infty$ constraints. In this setting, other types of phenomenons  appear.  As we will  see while proving Theorem \ref{Th:Quanti}, a shape derivative approach can not be sufficient  in of its own for our purposes, as is usually the case, but is needed. Indeed, changes in the topology of competitors may occur, which calls for a new specific method.  To tackle the second order shape derivative, we use a comparison principle. 
\color{black}

\subsubsection{Mathematical biology}\label{Su:Biology}
We briefly sketch some of the biological motivations for the problem under scrutiny here. Following the works of Fisher, \cite{Fisher}, Kolmogoroff, Petrovsky and Piscounoff \cite{KPP}, a popular model for population dynamics in a bounded domain is the following so-called logistic-diffusive equation: 
\begin{equation}\label{dE}
\left\{\begin{array}{ll}
\frac{\partial u}{\partial t}=\Delta u+u(m-u)\text{ in }\O\, ,&
\\u=0\text{ on } \partial \O\,,&
\\u(t=0)=u_0\geq 0\, , u_0\neq 0.&
\end{array}
\right.
\end{equation}
In this equation, $m\in L^\infty(\O)$ accounts for the spatial heterogeneity and can be interpreted in terms of resources distribution: the zones $\{m\geq 0\}$ are favorable to the growth of the population, while the zones $\{m\leq 0\}$ are detrimental to this population. The particular structure of the non-linearity $-u^2$ (which accounts for the Malthusian growth of the population) makes it so that two linear steady states equations are relevant to our study: the steady-logistic diffusive equation
\begin{equation}
\left\{\begin{array}{ll}
\Delta \theta+\theta(m-\theta)=0\text{ in }\O\, ,&
\\\theta=0\text{ on } \partial \O\,,&
\\\theta\geq 0,&
\end{array}
\right.
\end{equation}
and the first eigenvalue equation of the linearization of \eqref{dE} around the solution $z\equiv 0$:
\begin{equation}\label{Eq:SLDE}
\left\{\begin{array}{ll}
-\Delta \p_m-m\p_m=\lambda(m)\p\text{ in }\O\, ,&
\\\p=0\text{ on } \partial \O,&
\end{array}
\right.
\end{equation}
where $\lambda(m)$ is the first eigenvalue of the operator $\mathcal L_m$ defined in \eqref{Eq:Operator}. More precisely, it is known (see \cite{BHR,CantrellCosner1,Skellam}) that 
\begin{enumerate}
\item Whenever $\lambda(m)<0$,  \label{Eq:SLDE} has a unique solution $\theta_m$, and any solution $u=u(t,x)$ of \eqref{dE} with initial datum $u_0\geq 0\,, u_0\neq 0$ converges in any $L^p$ to $\theta_m$ as $t\to \infty$.
\item Whenever $\lambda(m)\geq0$, any solution $u=u(t,x)$ of \eqref{dE} with initial datum $u_0\geq 0\,, u_0\neq 0$ converges in any $L^p$ to $0$ as $t\to \infty$.
\end{enumerate}
The eigenvalue which we seek to minimize can thus be interpreted as a measure of the survival ability given by a resources distribution, and later works investigated the problem of minimizing $\lambda(m)$ with respect to $m$ under the constraint $m\in \mathcal M(\O)$, where $\mathcal M(\O)$ is defined in \eqref{De:Admissible}. In other words, this is the problem \eqref{Eq:PvG}.
\\In the case of Neumann boundary conditions, Berestycki, Hamel and Roques introduced the use of a rearrangement (due to Berestycki and Lachand-Robert,\cite{BLR}) in that context, see \cite{BHR} and \cite{Kawohl} for an introduction to rearrangement, and further geometrical properties of optimizers were derived by Lou and Yanagida, \cite{LouYanagida}, by Kao, Lou and Yanagida \cite{KaoLouYanagida}. We do not wish to be exhaustive regarding the literature of this domain and refer to \cite{LamboleyLaurainNadinPrivat} where Lamboley, Laurain, Nadin and Privat investigate several properties of solutions of \eqref{Eq:PvG} under a variety of boundary conditions, and the references therein.

\section{Proof of Theorem \ref{Th:Quanti}}\label{Pr:Quanti}

\subsection{Background on \eqref{Eq:PvG} and structure of the proof}\label{Ss:StructureProof}
We recall that we work in $\O=\mathbb B(0;R)\subset \R^2$, that 
$$\mathcal M:=\mathcal M\big(\B(0;R)\big)=\left\{0\leq V \leq 1\, , \int_\O V=V_0\right\}$$
and that $r^*$ is chosen so that 
$$|\B (0;r^*)|=V_0$$ i.e such that 
$$V^*=\chi_{\mathbb B(0;r^*)}\in \mathcal M.$$ We define 
$$\mathbb S^*:=\partial \B^*.$$ We first recall the following simple consequence of Schwarz' rearrangement:
\begin{lemma}\label{Le:Schwarz}
$V^*$ is the unique minimizer of $\lambda$ in $\mathcal M$: for any $V \in \mathcal M$, $$\lambda(V^*)<\lambda(V).$$ The associated eigenfunction $u_*$ is decreasing and radially symmetric.\end{lemma}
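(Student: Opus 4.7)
The plan is to combine the Rayleigh quotient characterization \eqref{Eq:Rayleigh} with a three-step rearrangement argument. Fix $V \in \mathcal{M}$ and let $u_V$ be its first eigenfunction with $\int_\O u_V^2 = 1$; denote by $u_V^\sharp$ and $V^\sharp$ the Schwarz rearrangements (radially non-increasing equimeasurable representatives on $\B(0;R)$). By the classical P\'olya--Szeg\H{o} inequality, $\int_\O |\n u_V^\sharp|^2 \leq \int_\O |\n u_V|^2$, and by the Hardy--Littlewood inequality, $\int_\O V u_V^2 \leq \int_\O V^\sharp (u_V^\sharp)^2$. Since $u_V^\sharp$ is still admissible in the Rayleigh quotient (the $L^2$ norm is preserved), this already yields $\lambda(V) \geq \int_\O |\n u_V^\sharp|^2 - \int_\O V^\sharp (u_V^\sharp)^2$.

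Next I would apply the bathtub principle in its integral form. The function $(u_V^\sharp)^2$ is radially non-increasing, and $V^\sharp$ satisfies the same $L^\infty$ and integral constraints as $V$, so the maximum of $W \mapsto \int_\O W (u_V^\sharp)^2$ over $W$ with $0\leq W \leq 1$ and $\int W = V_0 |\O|$ is attained precisely by placing all of $W$'s mass on the top superlevel set of $(u_V^\sharp)^2$, i.e.\ on $\B(0;r^*)$. This gives $\int_\O V^\sharp (u_V^\sharp)^2 \leq \int_\O V^* (u_V^\sharp)^2$, and testing $u_V^\sharp$ in the variational definition of $\lambda(V^*)$ closes the chain: $\lambda(V) \geq \lambda(V^*)$.

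For the qualitative properties of $u_*$: since $\lambda(V^*)$ is a simple eigenvalue and $V^*$ is invariant under every rotation about the origin, $u_*$ must be radial. Radial monotonicity then follows either by writing the eigenfunction equation as a 1D Sturm--Liouville ODE on $(0;R)$ and applying a Hopf-type maximum principle, or more directly by noting that equality in P\'olya--Szeg\H{o} applied to $u_*$ itself forces $u_* = u_*^\sharp$, which is by construction radially non-increasing.

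The main obstacle will be the \emph{uniqueness} claim $\lambda(V^*) < \lambda(V)$ for $V \neq V^*$: one must trace equality cases through all three steps. Equality in the bathtub step forces $V^\sharp = V^*$ a.e., provided $(u_V^\sharp)^2$ is strictly decreasing on $(0;R)$ --- which requires the level sets of $u_V^\sharp$ to have zero Lebesgue measure, a fact I would obtain from unique continuation / analyticity of principal eigenfunctions of Schr\"odinger operators. This reduces $V$ to a characteristic function equimeasurable with $V^*$. Then equality in P\'olya--Szeg\H{o} via a Brothers--Ziemer type result forces $u_V$ to be a translate of $u_V^\sharp$; the Dirichlet boundary condition on $\partial \B(0;R)$ rules out any nontrivial translation, hence $u_V = u_V^\sharp$, and comparing the eigenvalue equations yields $V = V^*$. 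This last step is the technically delicate part; everything else is a relatively direct application of symmetrization.
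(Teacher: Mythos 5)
Your proposal follows the same route as the paper's Annex~A proof: Schwarz symmetrization of $u_V$, P\'olya--Szeg\H{o} for the gradient term, Hardy--Littlewood for the potential term, and the Rayleigh quotient \eqref{Eq:Rayleigh} to close the chain. Two remarks on where you go further. First, your explicit bathtub step is actually needed and is glossed over in the paper: Hardy--Littlewood only gives $\int_\O V u_V^2 \leq \int_\O V^\sharp (u_V^\sharp)^2$ with $V^\sharp$ the rearrangement of $V$, which is in general \emph{not} equal to $\chi_{\B^*}$ unless $V$ is a characteristic function; the paper writes $V^*$ directly in the second line of its chain, implicitly absorbing the bathtub inequality $\int_\O V^\sharp (u_V^\sharp)^2 \leq \int_\O \chi_{\B^*}(u_V^\sharp)^2$. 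Making it explicit, as you do, is the more careful presentation. Second, the paper's annex proof establishes only the non-strict inequality $\lambda(V)\geq\lambda(V^*)$ and the symmetry of $u_*$; it does not trace equality cases at all, even though the lemma asserts strictness (strictness is what is actually used in Step~2, Lemma~\ref{Le:L1Neigh}). Your Brothers--Ziemer outline for the equality case is sound, with one caveat: invoking ``analyticity'' of the eigenfunction is not justified for an $L^\infty$ potential, and the unique-continuation route to measure-zero level sets is delicate (on a level set of positive measure one only gets $V=-\lambda$ a.e.\ there, which is not immediately contradictory when $-1\leq\lambda\leq 0$). A cleaner path is to first use equality in the full chain to conclude that $u_V^\sharp$ realizes $\lambda(V^*)$, hence $u_V^\sharp=u_*$ by simplicity and positivity; the strict radial monotonicity of $u_*$ (from the one-dimensional ODE) then supplies both the strict-decrease hypothesis for the bathtub equality case and the no-flat-spots hypothesis for Brothers--Ziemer. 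With that adjustment your argument is complete and strictly stronger than what the paper records.
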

This result is well-known, but for the sake of completeness we prove it in Annex \ref{An:Schwarz}.
\\The proof of Theorem \ref{Th:Quanti} relies on the study of two auxilliary problem: we introduce, for a given $\delta>0$, the new admissible sets 
\begin{equation}\label{De:AdmissibleDelta}\mathcal M_\delta:=\left\{V \in \mathcal M\, ,||V-V^*||_{L^1}=\delta\right\},\end{equation}
\begin{equation}\label{De:AdmissibleDeltaRadial}\tilde{\mathcal M}_\delta:=\left\{V\text{ radially symmetric, }V \in \mathcal M\, ,||V-V^*||_{L^1}=\delta\right\}\end{equation}
 and study the two variational problems
\begin{equation}\label{Eq:PvDeltaRadial}
\inf_{V \in \tilde{\mathcal M_\delta}}\lambda(V)\end{equation}
and 
\begin{equation}
\label{Eq:PvDelta}\inf_{V \in \mathcal M_\delta}\lambda(V).\end{equation} Obviously, Theorem \ref{Th:Quanti} is equivalent to the existence of $C>0$ such that
\begin{equation}\label{Eq:Rate}\forall \delta>0\, ,\forall V \in \mathcal M_\delta\, , \lambda(V)-\lambda(V^*)\geq C \delta^2.\end{equation} 
\begin{remark}
This is a parametric version of the selection principle of \cite{AcerbiFuscoMoreni}, that was developed in \cite{BDPV}. We refer to \cite{BrascoDePhilippis} for a synthetic presentation of this selection principle. We note however that the fact that they use a perimeter constraint enables them to prove that a solution to their auxiliary problem is a normal deformation of the optimal shape. The main difficulty in the analysis of \cite{BDPV} is establishing $\mathscr C^2$ bounds for this normal deformation. Here, working with subsets as shape variables gives, from elliptic regularity, enough regularity to carry out this step when the solution of the auxiliary problem is a normal deformation of $\B^*$. However, we conjecture that the solutions of \eqref{Eq:PvDeltaRadial} and \eqref{Eq:PvDelta} are equal and are disconnected (see Step 3 and the Conclusion for a precise conjecture), so that the core difficulty is proving  that handling the inequality for normal deformations and for radial distributions is enough to get the inequality for all other sets.
\end{remark}To prove \eqref{Eq:Rate}
we follow the steps below:
\begin{enumerate}
\item We first show that \eqref{Eq:PvDelta} and \eqref{Eq:PvDeltaRadial} have solutions. The solutions of \eqref{Eq:PvDelta} will be denoted by $\mathcal V_\delta$, the solution of \eqref{Eq:PvDeltaRadial} will be denoted by $\mathcal H_\delta$.
\item We prove that it suffices to establish \eqref{Eq:Rate} for $\delta$ small enough.
\item For \eqref{Eq:PvDeltaRadial} we fully characterize the solutions for $\delta>0$ small enough and prove that 
\begin{equation}\label{Eq:RadialRate}\forall \delta>0\, ,\forall V \in \Tilde{\mathcal M}_\delta\, , \lambda(V)-\lambda(V^*)\geq C \delta^2.\end{equation} In other words, we prove that Theorem \ref{Th:Quanti} holds for radially symmetric functions. 
\item We compute the first and second order shape derivatives of the associated Lagrangian at the ball $\B^*$ and prove a $L^2$-coercivity estimate for the second order derivative. We comment upon the fact that (unlike many shape optimization problems) this is the optimal coercivity  norm at the beginning of this Step.
 We use  this information to  prove that Theorem \ref{Th:Quanti} holds for domains that are small normal deformations of $\B^*$ with bounded mean curvature.
\item We establish a dichotomy for the behaviour of $\mathcal V_\delta$ and prove that \eqref{Eq:Rate} holds for any $V$ by using \eqref{Eq:RadialRate} and Step 4. 
\end{enumerate}

\subsection{Step 1: Existence of solutions to \eqref{Eq:PvDeltaRadial}-\eqref{Eq:PvDelta}}\label{Su:Step1}
We prove the following Lemma:
\begin{lemma}\label{Le:Existence}
The optimization problems \eqref{Eq:PvDeltaRadial} and \eqref{Eq:PvDelta} have solutions.\end{lemma}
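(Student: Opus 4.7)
The plan is to apply the direct method of the calculus of variations to both problems simultaneously. Let $(V_n)$ be a minimizing sequence for either \eqref{Eq:PvDelta} or \eqref{Eq:PvDeltaRadial}. Since $0 \leq V_n \leq 1$, the sequence is bounded in $L^\infty(\O)$, so by Banach--Alaoglu I may extract a subsequence (not relabelled) and a limit $V_\infty \in L^\infty(\O)$ with $V_n \rightharpoonup^* V_\infty$ weakly-$*$ in $L^\infty$. The pointwise bounds $0 \leq V_\infty \leq 1$ are preserved by weak-$*$ limits, and testing against $\mathbf{1} \in L^1(\O)$ gives $\int_\O V_\infty = V_0|\O|$, so $V_\infty \in \mathcal M$.

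The subtle point is that the constraint $\|V_n - V^*\|_{L^1} = \delta$ is \emph{not} automatically preserved by weak-$*$ convergence. I would resolve this by the following algebraic identity, valid on all of $\mathcal M$: using $0 \leq V \leq 1$, $V^* = \chi_{\B^*}$ and $\int V = |\B^*|$, one computes
\begin{equation*}
\|V-V^*\|_{L^1} = \int_{\B^*}(1-V) + \int_{\O\setminus\B^*}V = 2\int_{\O\setminus\B^*}V.
\end{equation*}
The right-hand side is \emph{linear} in $V$ and $\chi_{\O \setminus \B^*} \in L^1(\O)$, so weak-$*$ convergence of $V_n$ yields $\|V_\infty - V^*\|_{L^1} = \delta$, i.e.\ $V_\infty \in \mathcal M_\delta$. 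For the radial problem, I note that if $V_n$ is radially symmetric, then for every rotation $R$ and every $\varphi \in L^1(\O)$, $\int V_\infty (\varphi \circ R) = \lim \int V_n(\varphi \circ R) = \lim\int V_n\varphi = \int V_\infty \varphi$, so $V_\infty$ is radial and hence lies in $\tilde{\mathcal M}_\delta$.

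It remains to establish (weak-$*$) lower semicontinuity of $\lambda$. Denote by $u_n$ the $L^2$-normalized eigenfunction associated to $V_n$. From the Rayleigh characterization \eqref{Eq:Rayleigh} and $\|V_n\|_{L^\infty}\leq 1$, I obtain
\begin{equation*}
\int_\O |\n u_n|^2 = \lambda(V_n) + \int_\O V_n u_n^2 \leq \lambda(0) + 1,
\end{equation*}
so $(u_n)$ is bounded in $W^{1,2}_0(\O)$. Extracting further, $u_n \rightharpoonup u_\infty$ weakly in $W^{1,2}_0(\O)$ and, by Rellich--Kondrachov, $u_n \to u_\infty$ strongly in $L^2(\O)$, with $\int u_\infty^2 = 1$. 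Writing $\int V_n u_n^2 = \int V_n (u_n^2 - u_\infty^2) + \int V_n u_\infty^2$ and combining the strong $L^1$-convergence of $u_n^2$ with the weak-$*$ convergence of $V_n$ (and $\|V_n\|_{L^\infty}\leq 1$ to bound the first term), I get $\int V_n u_n^2 \to \int V_\infty u_\infty^2$. Lower semicontinuity of the Dirichlet energy then yields
\begin{equation*}
\lambda(V_\infty) \leq \int_\O|\n u_\infty|^2 - \int_\O V_\infty u_\infty^2 \leq \liminf_{n \to \infty}\left(\int_\O|\n u_n|^2 - \int_\O V_n u_n^2\right) = \lim_{n\to\infty}\lambda(V_n),
\end{equation*}
and since $V_\infty$ belongs to $\mathcal M_\delta$ (resp.\ $\tilde{\mathcal M}_\delta$), it is the sought minimizer.

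The only nontrivial obstacle is the preservation of the $L^1$-distance constraint under weak-$*$ convergence; everything else is the standard direct method. The linearization $\|V - V^*\|_{L^1} = 2\int_{\O \setminus \B^*} V$, which is only available because $V^*$ is a characteristic function and the $L^\infty$ and mass constraints are saturated in a one-sided way, is the key observation that turns an \emph{a priori} lower-semicontinuous constraint into a continuous one.
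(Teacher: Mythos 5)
Your proof is correct and follows essentially the same strategy as the paper: weak-$*$ compactness of the constraint set via the observation that, thanks to the sign structure $0\leq V\leq 1$, $V^*=\chi_{\B^*}$ and the saturated mass constraint, the $L^1$-distance $\|V-V^*\|_{L^1}=2\int_{\O\setminus\B^*}V$ is affine and hence weak-$*$ continuous, followed by the direct method for $\lambda$. The only (harmless) difference is in the last step: the paper passes to the limit in the eigenvalue equation and identifies $u_\infty$ as the first eigenfunction via its sign, whereas you use only lower semicontinuity of the Rayleigh quotient, which slightly streamlines the argument.
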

\begin{proof}[Proof of Lemma \ref{Le:Existence}]
The proof follows from the following claim:
\begin{claim}\label{Cl:Compact}
$\mathcal M_\delta$ and $\tilde{ \mathcal M}_\delta$ are compact for the weak $L^\infty$ topology.\end{claim}
 We postpone the proof to the end of this Proof.
\\Lemma \ref{Le:Existence} follows from this claim, and we only write the details for \eqref{Eq:PvDelta}. Let $\{V_k\}_{k\in \N}$ be a minimizing sequence for $\lambda$ in $\mathcal M_\delta$.   From Claim \ref{Cl:Compact}, there exists $V_\infty\in \mathcal M_\delta$ such that  $$V_k\rightharpoonup V_\infty.$$ The notation $\rightharpoonup$ stands for the weak convergence in the  weak $L^\infty$-* sense.
Let, for any $k\in \N$, $u_k:=u_{V_k}$ and $\lambda_k:=\lambda(V_k)$.
\\We first note that the sequence $\{\lambda_k\}_{k\in \N}$ is bounded. Indeed, let $\p\in W^{1,2}_0(\O)$ be such that 
$\int_\O \p^2=1.$ From the formulation in terms of Rayleigh quotients  and $V_k\geq 0$  there holds
$$\lambda_k\leq \int_\O |\n \p|^2-\int_\O V_k\p^2\leq \int_\O |\n \p|^2.$$ This gives an upper bound. For a lower bound, let $\lambda_1(\O)$ be the first Dirichlet eigenvalue of $\O$ (equivalently, this is the eigenvalue associated with $V=0$). From $V\leq 1$, $\int_\O u_k^2=1$ and the variational formulation for $\lambda_1(\O)$ there holds
$$\lambda_1(\O)-1\leq \int_\O |\n u_k|^2-\int_\O Vu_k^2\leq \lambda_k$$ so that the sequence also admits a lower bound.
 It is straightforward to see that $\{u_k\}_{k\in \N}$ is bounded in $W^{1,2}_0(\O)$ so that from the Rellich-Kondrachov Theorem there exists $u_\infty\in W^{1,2}_0(\O)$ such that $\{u_k\}_{k\in \N}$ converges strongly in $L^2$ and weakly in $W^{1,2}_0(\O)$ to $u_\infty$. Passing to the limit in the weak  formulation 
$$\forall v \in W^{1,2}_0(\O)\, , \int_\O \langle \n u_k,\n v\rangle-\int_\O V_ku_kv=\lambda(V_k)\int_\O u_kv,$$ in the normalization condition 
$$\int_\O u_k^2=1$$ and in 
$$u_k\geq 0$$ readily shows that $u_\infty$ is a non-trivial eigenfunction of $\mathcal L_{V_\infty}$. Furthermore, it is non-negative. Since the first eigenfunction is the only eigenfunction with a constant sign, this proves that $\lambda_\infty=\lambda(V_\infty)$ and that $u_\infty$ is the eigenfunction associated with $V_\infty$. Thus:
$$\lambda(V_\infty)=\inf_{V\in \mathcal M_\delta}\lambda(V).$$
It remains to prove Claim \ref{Cl:Compact}:

\begin{proof}[Proof of Claim \ref{Cl:Compact}] We only prove it for $\mathcal M_\delta$.
\\Let $\{V_k\}_{k\in \N}\in \mathcal M_\delta^\N$. We define, for any $k\in \N$
$$h_k:=V_k-V^*.$$
Since $V^*=\chi_{\B^*}$ and $0\leq V_k\leq 1$, the following signe conditions hold on $h_k$:
\begin{equation}\label{Eq:Sign}h_k\geq 0\text{ in }(\B^*)^c\, , h_k\leq 0\text{ in }\B^*.\end{equation}
Since $\int_\O V_k=\int_\O V^*$ there holds
\begin{equation}\label{Eq:Bal}\int_{\B^*}h_k=-\int_{(\B^*)^c} h_k.\end{equation}
Finally from 
$$||V_k-V^*||_{L^1}=\delta$$  there comes
\begin{align}
\delta&=\int_\O |V_k-V^*|
\\&=\int_\O|h_k|
\\&=\int_{(\B^*)^c} h_k-\int_{\B^*}h_k\text{ from \eqref{Eq:Sign}}
\\&=-2\int_{\B^*}h_k\text{ from \eqref{Eq:Bal}}
\\\label{Eq:H+}&=2\int_{(\B^*)^c}h_k.
\end{align}
We see $h_k\chi_{(\B^*)^c}$ as an element of $L^\infty\left((\B^*)^c\right)$. Let $h_\infty^+$ be a weak-$L^\infty$ closure point of $\{h_k\}_{k\in \N}$ in $L^\infty\left((\B^*)^c\right)$.  From \eqref{Eq:H+} and \eqref{Eq:Sign} we have 
\begin{equation}\label{Conclus1}1\geq h_\infty^+\geq 0\, , \int_{(\B^*)^c} h_\infty^+=\frac{\delta}2.\end{equation}
For the same reason, there exists $h_\infty^-\in L^\infty(\B^*)$ such that
$$-1\leq h_\infty^-\leq 0\, , \int_{\B^*} h_\infty^-=-\frac{\delta}2$$ and
\begin{equation}\label{Conclus2}h_k\rightharpoonup h_\infty\text{ weak-$*$ in }L^\infty(\B^*).\end{equation}
We define 
$$h_\infty:=h_\infty^-\chi_{(\B^*)^c}+h_\infty^-\chi_{\B^*}$$ and it is clear that
$$h_k\rightharpoonup h_\infty \text{ weak-$*$ in }L^\infty(\O).$$
Setting $V_\infty:=V^*+h_\infty$ there holds
$$V_k\rightharpoonup V_\infty$$ and, by \eqref{Conclus1},\eqref{Conclus2}, $V_\infty \in \mathcal M_\delta(\O)$.
\end{proof}
\end{proof}

\subsection{Step 2: Reduction to small $L^1$ neighbourhoods of $V^*$}
We now prove the following Lemma:
\begin{lemma}\label{Le:L1Neigh}
To prove Theorem \ref{Th:Quanti}, it suffices to prove \eqref{Eq:Rate} for $\delta$ small enough, in other words it suffices to prove that there exists $C>0$ such that
$$\underset{\delta \to 0}{\lim\inf} \left(\inf_{V\in \mathcal M_\delta}\frac{\lambda(V)-\lambda(V^*)}{\delta^2}\right)\geq C.$$
\end{lemma}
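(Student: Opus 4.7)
The strategy is a standard dichotomy: split $\mathcal{M}$ into a small $L^1$-neighbourhood of $V^*$ (where the infinitesimal hypothesis gives the conclusion) and its complement (where compactness plus strict optimality of $V^*$ gives a uniform positive gap, which is easily converted into a quadratic lower bound since $\delta$ is bounded from above).

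First I would observe that $\delta=\|V-V^*\|_{L^1}$ is uniformly bounded on $\mathcal{M}$ by $\bar\delta:=2V_0|\O|$, since $0\leq V,V^*\leq 1$ and both integrate to $V_0|\O|$. Assume by contradiction that the lower bound in the statement holds, i.e.\ there exist $C_0>0$ and $\delta_1>0$ such that
\[ \lambda(V)-\lambda(V^*)\geq C_0\,\delta^2 \qquad\text{for every }V\in\mathcal{M}_\delta\text{ with }\delta\leq \delta_1. \]
It remains to prove the same inequality (with a possibly smaller constant) for $\delta\in[\delta_1,\bar\delta]$.

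Consider the set
\[ \mathcal{A}_{\delta_1}:=\bigl\{V\in\mathcal M\,:\, \|V-V^*\|_{L^1}\geq \delta_1\bigr\}. \]
I would note that, thanks to the sign conditions $V\leq V^*$ on $\B^*$ and $V\geq V^*$ on $(\B^*)^c$ (which follow from $0\leq V\leq 1=V^*$ on $\B^*$ and $V\geq 0=V^*$ on $(\B^*)^c$) together with the mass constraint, one has the linear identity
\[ \|V-V^*\|_{L^1}=2\int_{(\B^*)^c}(V-V^*) \qquad \forall V\in\mathcal{M}. \]
Thus $V\mapsto\|V-V^*\|_{L^1}$ is continuous for the weak-$L^\infty\!$-$\ast$ topology on $\mathcal{M}$, hence $\mathcal{A}_{\delta_1}$ is weak-$\ast$ closed. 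Since $\mathcal{M}$ is weak-$\ast$ compact (bounded in $L^\infty$), so is $\mathcal{A}_{\delta_1}$. Moreover, by the very argument given in Step~1 (Rellich compactness of the eigenfunction sequence, passing to the limit in the eigenvalue equation and using that the first eigenfunction is the unique sign-definite one), the map $V\mapsto \lambda(V)$ is continuous for the weak-$\ast$ topology on $\mathcal M$.

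Combining compactness of $\mathcal{A}_{\delta_1}$, continuity of $\lambda$, and the strict optimality statement of Lemma~\ref{Le:Schwarz} (which ensures $V^*\notin\mathcal A_{\delta_1}$ is the \emph{unique} minimiser), the infimum
\[ c_1:=\inf_{V\in\mathcal{A}_{\delta_1}}\bigl(\lambda(V)-\lambda(V^*)\bigr) \]
is attained and strictly positive. Therefore for every $V\in\mathcal{M}_\delta$ with $\delta\geq\delta_1$,
\[ \lambda(V)-\lambda(V^*)\geq c_1 \geq \frac{c_1}{\bar\delta^{\,2}}\,\delta^2, \]
and together with the small-$\delta$ hypothesis this yields \eqref{Eq:Quantitative} with constant $C:=\min\!\bigl(C_0,\,c_1/\bar\delta^{\,2}\bigr)$, proving Theorem~\ref{Th:Quanti}. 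The only non-routine ingredient is the continuity of both $\lambda$ and $\|\cdot-V^*\|_{L^1}$ in the weak-$\ast$ topology, and both are essentially already established in Step~1; no further work is expected to be required.
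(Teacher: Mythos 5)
Your proof is correct and follows essentially the same route as the paper: both rest on weak-$*$ compactness, the weak-$*$ continuity of $\lambda$ and of $V\mapsto\|V-V^*\|_{L^1}$ (via the sign/mass identity $\|V-V^*\|_{L^1}=2\int_{(\B^*)^c}V$), and the strict minimality of $V^*$ from Lemma \ref{Le:Schwarz}; the paper phrases this as a two-case analysis of a minimizing sequence for the quotient $G(V)=(\lambda(V)-\lambda(V^*))/\|V-V^*\|_{L^1}^2$ rather than as a compactness statement for the set $\{\|V-V^*\|_{L^1}\geq\delta_1\}$, but the content is identical. The only blemish is the phrase ``assume by contradiction'': what follows is a direct proof assuming the hypothesis of the lemma, not a contradiction argument.
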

\begin{proof}[Proof of Lemma \ref{Le:L1Neigh}]
We define, for any $V\neq V^*$,
$$G(V):=\frac{\lambda(V)-\lambda(V^*)}{||V-V^*||_{L^1}^2}$$ and consider a  minimizing  sequence $\{V_k\}_{k\in \N}$ for $G$. Then we either have, up to subsequence, 
$$\forall k \in \N\, ,||V_k-V^*||_{L^1}\geq \e>0$$ or 
$$||V_k-V^*||_{L^1}\underset{k\to \infty}\rightarrow 0.$$
In the first case, up to a converging subsequence, $V_k\rightharpoonup V_\infty$ in a weak $L^\infty$-$*$ sense and, by the same arguments as in the Proof of Claim \ref{Cl:Compact}, 
$$||V_\infty-V^*||_{L^1}\geq \e>0.$$ Furthermore, by the same arguments as in the proof of Lemma \ref{Le:Existence}, 
$$\lambda(V_k)\underset{k\to \infty}\longrightarrow \lambda(V_\infty)$$ so that 
$$G(V_k)\underset{k\to \infty}\longrightarrow \frac{\lambda(V_\infty)-\lambda(V^*)}{||V_\infty-V^*||_{L^1}}$$ and, by Lemma \ref{Le:Schwarz}, $G(V_\infty)>0$. Hence we only need to study the case $||V_k-V^*||_{L^1}\underset{k\to \infty}\rightarrow 0$, as claimed.
\end{proof}
The same arguments yield the following Lemma:

\begin{lemma}\label{Le:L1NeighRadial}
\eqref{Eq:RadialRate} is equivalent to proving that there exists $C>0$ such that
$$\underset{\delta \to 0}{\lim\inf} \left(\inf_{V\in\tilde{ \mathcal M}_\delta}\frac{\lambda(V)-\lambda(V^*)}{\delta^2}\right)\geq C.$$
\end{lemma}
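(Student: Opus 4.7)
The plan is to mirror \emph{mutatis mutandis} the argument of Lemma \ref{Le:L1Neigh}, with the extra (and essentially free) verification that the class of radial potentials is stable under the weak-$L^\infty$-$*$ convergence used in Claim \ref{Cl:Compact}. The implication from \eqref{Eq:RadialRate} to the $\liminf$ statement is trivial, so the content is the converse.

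Assume that there exists $C>0$ with
$$\underset{\delta \to 0}{\lim\inf} \left(\inf_{V\in\tilde{ \mathcal M}_\delta}\frac{\lambda(V)-\lambda(V^*)}{\delta^2}\right)\geq C,$$
and introduce, for any radial $V\neq V^*$ in $\mathcal M$, the functional
$$G(V):=\frac{\lambda(V)-\lambda(V^*)}{\|V-V^*\|_{L^1}^2}.$$
Consider a minimising sequence $\{V_k\}_{k\in\N}$ for $G$ in the radial class $\bigcup_{\delta>0}\tilde{\mathcal M}_\delta$. Two cases arise: either $\|V_k-V^*\|_{L^1}\to 0$, which is precisely the regime covered by the hypothesis and gives $\liminf_k G(V_k)\geq C$, or, up to extraction, $\|V_k-V^*\|_{L^1}\geq \varepsilon>0$ for some $\varepsilon>0$.

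In this second case, I would apply the weak-$L^\infty$-$*$ compactness argument of Claim \ref{Cl:Compact} to extract a limit $V_\infty\in\mathcal M$ with $V_k\rightharpoonup V_\infty$. The key additional point is that the set of radial functions is closed under weak-$L^\infty$-$*$ convergence: testing $V_k$ against $\varphi\circ|\cdot|$ for $\varphi\in L^1([0,R])$ shows that $\int V_\infty (\varphi\circ|\cdot|)$ determines $V_\infty$, and symmetry of $V_k$ is preserved in the limit (equivalently, the weak limit commutes with the averaging against rotations). Thus $V_\infty$ is radial. Next, by the lower-semicontinuity argument of Lemma \ref{Le:Existence} (strong $L^2$ convergence of the eigenfunctions plus passing to the limit in the weak formulation), $\lambda(V_k)\to\lambda(V_\infty)$. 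The same argument as in Claim \ref{Cl:Compact} yields $\|V_\infty-V^*\|_{L^1}\geq\varepsilon>0$, so in particular $V_\infty\neq V^*$. Lemma \ref{Le:Schwarz} then gives $\lambda(V_\infty)>\lambda(V^*)$, whence $G(V_\infty)>0$ and
$$\lim_k G(V_k)=G(V_\infty)>0.$$

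Combining the two cases, $\inf G$ over the radial class is bounded below by a strictly positive constant, which is exactly \eqref{Eq:RadialRate}. I do not expect any real obstacle: the only subtlety is the preservation of radial symmetry under weak-$L^\infty$-$*$ limits, which is immediate from the definition of weak convergence applied to rotation-invariant test functions. Everything else is a verbatim transcription of Lemma \ref{Le:L1Neigh}.
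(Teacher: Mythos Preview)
Your proposal is correct and follows exactly the approach the paper intends: the paper simply writes ``The same arguments yield the following Lemma,'' and you have faithfully transcribed the proof of Lemma~\ref{Le:L1Neigh} to the radial class, adding the one extra check that is genuinely needed --- namely that radial symmetry is preserved under weak-$L^\infty$-$*$ limits. That verification is indeed immediate (rotations act continuously on $L^1$ test functions), and everything else is verbatim from Lemma~\ref{Le:L1Neigh}.
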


\subsection{Step 3: Proof of \eqref{Eq:RadialRate}}
In this Subsection we prove \eqref{Eq:RadialRate} or, in other words, we prove that Theorem \ref{Th:Quanti} holds for radial distributions.
\begin{proof}[Proof of \eqref{Eq:RadialRate}]
We recall that, by Lemma \ref{Le:Existence}, there exists a solution to \eqref{Eq:PvDeltaRadial}. Let $\mathcal H_\delta$ be such a minimizer. 
\\We first characterize $\mathcal H_\delta$ for $\delta$ small enough. Let 
\begin{equation}\label{Eq:ADelta}\mathbb A_\delta:=\left\{|x|\leq r^*+r_\delta'\right\}\backslash\left\{x\, , r^*-r_\delta\leq |x|\leq r^*+r_\delta'\right\}\end{equation} be the annular structure such that \begin{equation}\label{Eq:ADeltaAd}\chi_{\mathbb A_\delta}\in \tilde {\mathcal M}_\delta.\end{equation} We represent it below
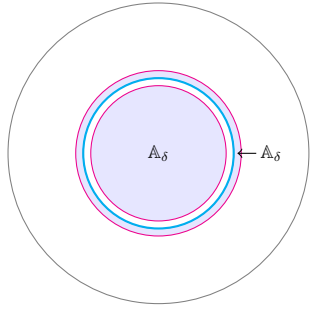
\begin{figure}[H]
\begin{center}
\begin{tikzpicture}[scale=0.5]
\draw [gray] (0,0) circle(4cm);
\draw[magenta, fill=blue, fill opacity=0.1] (0,0) circle(2.2cm);
\draw[gray]  (0,0) circle(2cm);
\draw[ cyan,thick,fill=white]  (0,0) circle(2cm);
\draw[magenta,fill=blue, fill opacity=0.1]  (0,0) circle(1.8cm);
\draw(0,0)node[scale=0.7] {$\mathbb A_\delta$};
\draw [->] (2.6,0)--(2.1,0);
\draw(3,0)node[scale=0.7] {$\mathbb A_\delta$};
\end{tikzpicture}
\caption{An example of $\mathbb A_\delta$}
\end{center}
\end{figure}
Since $r_\delta'$ is defined by the relation 
$$\left|\left\{ r^*\leq |x|\leq r^*+r_\delta'\right\}\right|=\frac\delta2$$ the set  $\mathbb A_\delta$ is uniquely defined.
We claim the following:
\begin{claim}\label{Cl:CharacterizationRadial}
There exists $\overline \delta>0$ such that, for any $\delta\leq \overline \delta$, 
$$\mathcal H_\delta=\chi_{\mathbb A_\delta}.$$
\end{claim}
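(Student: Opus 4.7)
The plan is to prove the claim in three stages, which jointly exploit the smallness of $\delta$: first establish $C^1$-convergence and strict radial monotonicity of $u_\delta$, then reduce to bang-bang minimizers, and finally identify the minimizer via a constrained bathtub principle.

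\textbf{Step A: regularity and monotonicity of $u_\delta$.} Since $\mathcal{H}_\delta$ is radial and $\lambda(\mathcal{H}_\delta)$ is simple, $u_\delta$ is radial. The bound $\|\mathcal{H}_\delta\|_{L^\infty}\leq 1$ combined with $\|\mathcal{H}_\delta-V^*\|_{L^1}=\delta$ gives, by interpolation, $\mathcal{H}_\delta\to V^*$ strongly in every $L^p(\O)$ with $p<\infty$. Applying elliptic $W^{2,p}$ regularity to $-\Delta u_\delta=(\mathcal{H}_\delta+\lambda(\mathcal{H}_\delta))u_\delta$ yields a uniform $C^{1,\alpha}$-bound on $u_\delta$; Arzel\`a--Ascoli and uniqueness of the limiting first eigenfunction then give $u_\delta\to u^*$ in $C^1(\overline{\O})$. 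Since $u^*$ is radial with $(u^*)'(r)<0$ on $(0,R]$, there exists $\overline\delta>0$ such that $u_\delta$ is strictly radially decreasing on $(0,R]$ for every $\delta\leq\overline\delta$.

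\textbf{Step B: bang-bang reduction.} Assume $S:=\{0<\mathcal{H}_\delta<1\}$ has positive measure. Because $V^*\in\{0,1\}$, the function $\mathrm{sgn}(\mathcal{H}_\delta-V^*)$ equals $-1$ on $S\cap\B^*$ and $+1$ on $S\cap(\B^*)^c$. A first-order expansion of the $L^1$-constraint shows that the admissible directions at $\mathcal{H}_\delta$ in $\tilde{\mathcal M}_\delta$ supported in $S$ are exactly the radial $h$ with $\int_{\B^*}h=\int_{(\B^*)^c}h=0$, and such $h$ are admissible in both signs for small step sizes. The first-order optimality $\dot\lambda(\mathcal{H}_\delta)[h]=-\int h u_\delta^2\geq 0$ therefore forces $\int h u_\delta^2=0$ for all such $h$, so $u_\delta^2$ is a.e.\ constant on each of $S\cap\B^*$ and $S\cap(\B^*)^c$. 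By the strict radial monotonicity of Step A, each of these is contained in a radial sphere, hence has measure zero. Thus $\mathcal{H}_\delta=\chi_{E_\delta}$ for some radial $E_\delta\subset\O$.

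\textbf{Step C: identification with $\mathbb{A}_\delta$.} Write $E_\delta=E_\delta^-\sqcup E_\delta^+$ with $E_\delta^-\subset\B^*$ and $E_\delta^+\subset(\B^*)^c$; the constraints $|E_\delta|=|\B^*|$ and $|E_\delta\Delta\B^*|=\delta$ force $|\B^*\setminus E_\delta^-|=|E_\delta^+|=\delta/2$. The same linearisation of the two constraints as in Step B shows that any admissible bang-bang perturbation of $\chi_{E_\delta}$ reduces to a swap $A\mapsto A'$ with $A\subset E_\delta^-$, $A'\subset\B^*\setminus E_\delta^-$, $|A|=|A'|$ (or the analogous swap outside $\B^*$). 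First-order optimality then reads $\int_A u_\delta^2\geq\int_{A'}u_\delta^2$ for every such swap, so by the bathtub principle $E_\delta^-$ coincides with the super-level set of $u_\delta^2$ in $\B^*$ of prescribed mass $|\B^*|-\delta/2$, and the strict radial monotonicity of $u_\delta$ pins it down as $\{|x|\leq r^*-r_\delta\}$ for the unique admissible $r_\delta$. The symmetric argument on $(\B^*)^c$ gives $E_\delta^+=\{r^*\leq|x|\leq r^*+r_\delta'\}$, and the union is exactly $\mathbb{A}_\delta$.

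\textbf{Main obstacle.} The most delicate point is Step B: the $L^1$-distance constraint $\|V-V^*\|_{L^1}=\delta$ is not smooth, and one must linearise it through the sign of $V-V^*$. Here this linearisation is honest because the rigidity $V^*\in\{0,1\}$ confines the sign of $\mathcal{H}_\delta-V^*$ to $\{-1,0\}$ on $\B^*$ and $\{0,+1\}$ on $(\B^*)^c$. That rigidity, combined with the $C^1$-closeness $u_\delta\to u^*$ needed to ensure strict radial monotonicity of $u_\delta$, is precisely why the smallness assumption $\delta\leq\overline\delta$ enters essentially and cannot be removed at this stage.
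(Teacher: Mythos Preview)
Your proof has the right ingredients (the $C^1$ convergence $u_\delta\to u_*$, the bathtub principle, and the identification of super-level sets as balls/annuli), and Step~C is essentially the paper's argument. However, there is a genuine gap in the chain Step~A $\Rightarrow$ Step~B.

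In Step~A you assert that $u_\delta$ is strictly radially decreasing on all of $(0,R]$ for small $\delta$. This does \emph{not} follow from $C^1$ convergence: since $(u_*)'(0)=0$, the derivative $(u_*)'$ is not bounded away from zero near the origin, so $C^1$-closeness only gives $u_\delta'<0$ on $[\epsilon,R]$ for each fixed $\epsilon>0$ (with $\overline\delta$ depending on $\epsilon$). Your Step~B then uses global monotonicity to conclude that the fractional set $S=\{0<\mathcal H_\delta<1\}$ is null. With monotonicity only on $[\epsilon,R]$ you obtain at best $|S\cap\B^*|\le\pi\epsilon^2$, which is no contradiction. Nor can one easily exclude a flat piece of $u_\delta$ near the origin by ODE considerations: if $\lambda_\delta\in(-1,0)$, the configuration $\mathcal H_\delta\equiv-\lambda_\delta\in(0,1)$ on a small interval, with $u_\delta$ constant there, is compatible with the equation, and nothing in your first-order argument rules it out.

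The paper sidesteps this by reversing your order of argument. Rather than first establishing bang-bang via optimality conditions, it applies the bathtub replacement directly to the (possibly non-bang-bang) minimizer: replacing $\mathcal H_\delta|_{\B^*}$ by $\chi_{\{u_\delta>\mu_\delta\}\cap\B^*}$ with the same mass can only decrease the Rayleigh quotient, and the resulting competitor still lies in $\tilde{\mathcal M}_\delta$, so minimality forces the super-level-set structure in one stroke. The identification with a centered ball then only requires (i) monotonicity of $u_\delta$ on $[\epsilon,r^*]$ and (ii) $u_\delta>\mu_\delta$ on $[0,\epsilon]$ (which follows from $\mu_\delta\to u_*(r^*)<u_*(\epsilon)$); both are honest consequences of $C^1$ convergence. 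In short, your Step~B is a detour whose justification breaks down near the origin; dropping it and running the bathtub argument of Step~C directly on $\mathcal H_\delta$ (as the paper does) closes the gap.
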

\begin{proof}[Proof of Claim \ref{Cl:CharacterizationRadial}]
To prove this claim, we need the optimality conditions associated with \eqref{Eq:PvDeltaRadial}. We first note that, if $u_\delta$ is the eigenfunction associated with $\mathcal H_\delta$, there exist two real numbers $\eta_\delta,\mu_\delta$ such that 
\begin{enumerate}
\item\label{Interieur} $\mathcal H_\delta=\chi_{\{u_\delta>\mu_\delta\}\cap \B^*}$ in $\B^*$,
\item $\mathcal H_\delta=\chi_{\{\sup_{\mathbb S^*} u_\delta\geq  u_\delta>\eta_\delta\}\cap (\B^*)^c}$ in $(\B^*)^c$,
\item $\left|\{u_\delta>\mu_\delta\}\cap \B^*\right|=V_0-\frac{\delta}2$, $\left|\{\sup_{\mathbb S^*} u_\delta\geq  u_\delta>\eta_\delta\}\cap (\B^*)^c\right|=\frac\delta2$.
\end{enumerate}
This is readily seen from the Rayleigh quotient formulation \eqref{Eq:Rayleigh}. We only prove \ref{Interieur}: let $\mu_\delta \in \R$ be the only real number such that 
$$ \left|\{u_\delta>\mu_\delta\}\cap \B^*\right|=V_0-\frac{\delta}2=\int_{\B^*}\mathcal H_\delta$$
and replace $\mathcal H_\delta$ by 
$$\tilde{\mathcal H_\delta}:=\chi_{ \{u_\delta>\mu_\delta\}\cap \B^*}+\mathcal H_\delta\chi_{(\B^*)^c}.$$ Since $u_\delta$ is radially symmetric (because $\mathcal H_\delta$ is radially symmetric), $\tilde{ \mathcal H}_\delta$ is radially symmetric.
\\Then, because $\int_{\B^*}\tilde{\mathcal H}_\delta=\int_{\B^*}\mathcal H_\delta$,  we have, by the bathtub principle (see \cite{HenrotPierre})
$$\int_{\B^*} u_\delta^2\mathcal H_\delta\leq \int_{\B^*}u_\delta^2\chi_{ \{u_\delta>\mu_\delta\}\cap \B^*},$$
hence 
$$\lambda(\mathcal H_\delta)\geq \int_\B |\n u_\delta|^2-\int_\B \tilde{\mathcal H}_\delta u_\delta^2\geq \lambda\left(\tilde{\mathcal H}_\delta\right).$$ This gives the required property.
\\We now need to exploit these optimality conditions. First of all, since $u_\delta$ is radially symmetric, we can define 
$$\zeta_\delta:=\left.u_\delta\right|_{\mathbb S^*}.$$
By standard elliptic estimates, we also have 
$$u_\delta \underset{\delta\to 0}\rightarrow u_*\text{ in }\mathscr C^{1,s} \text{ ( $s<1$)},$$ where $u_*$ is the eigenfunction associated with $V^*$. Since $\frac{\partial u_*}{\partial r}<-c$ on $\{|x|\geq \e\}$, $u_\delta$ is radially decreasing in $\{|x|>\e\}$ for $\delta>0$ small enough.
 It follows that $\mu_\delta>\zeta_\delta$ for $\delta$ small enough. For the same reason, $\zeta_\delta>\eta_\delta$ for $\delta$ small enough. Hence we have
$$\mu_\delta>\eta_\delta>\eta_\delta\, ,\mathcal H_\delta=\chi_{\{u_\delta>\mu_\delta\}}+\chi_{\{\zeta_\delta\geq u_\delta>\eta_\delta\}}.$$
Finally, once again because $u_\delta$ is radially decreasing on $\{|x|>\e\}$ for $\delta$ small enough, both level sets $\{u_\delta>\mu_\delta\}$ and $\{\zeta_\delta\geq u_\delta>\eta_\delta\}$ are connected, and $\mathcal H_\delta$ is the characteristic function of  a centered ball and of an annulus, i.e 
$$\mathcal H_\delta=\chi_{\{\|x\|\leq r^*-z_\delta\}}+\chi_{\{r^*\leq \|x\|\leq r^*+y_\delta\}}.$$
Since 
$$\left|\mathbb A_\delta \Delta \B^*\right|=\int_\B \left|\mathcal H_\delta-V^*\right|$$ there holds
$$\mathcal H_\delta=\chi_{\mathbb A_\delta}$$ for $\delta$ small enough, as claimed.

\end{proof}
We now turn to the proof of \eqref{Eq:RadialRate}: since $\mathcal H_\delta$ is the  minimizer of $\lambda$ in $\tilde{\mathcal M}_\delta$ we are going to prove that there exists a constant $C>0$ such that 
\begin{equation}\label{Eq:RateH}
\lambda(\mathcal H_\delta)\geq \lambda(V^*)+C\delta^2\end{equation}
for $\delta\leq \overline \delta$. Because of Lemma \ref{Le:L1Neigh}, \eqref{Eq:RadialRate} will follow. To prove \eqref{Eq:RateH}, we use parametric derivatives. Let us fix notations:
\begin{enumerate}
\item For any $\delta>0$ small enough so that $\mathcal H_\delta=\chi_{\mathbb A_\delta}$, we set 
$$h_\delta:=\mathcal H_\delta-V^*.$$
\item For any $t\in [0;1]$ we define $V_{\delta,t}$ as
$$V_{\delta,t}:=V^*+th_\delta$$ and  $u_{\delta,t}$ as the eigenfunction associated with $V_{\delta,t}$:
\begin{equation}\label{Eq:EigenFunctionT}
\left\{\begin{array}{ll}
-\Delta u_{\delta,t}-V_{\delta,t}u_{\delta,t}=\lambda_tu_{\delta,t}\text{ in }\O,&
\\u_{\delta,t}=0\text{ on }\partial \O.&
\\\int_\O u_{\delta,t}^2=1.
\end{array}
\right.
\end{equation}
\item For any such $\delta$, $\dot u_\delta$ is the  first order parametric derivative of $u$ at $V_{\delta,t}$ in the direction $h_\delta$ and $\lambda_{\delta,t}$ is the first order parametric derivative of $\lambda$ at $V_{\delta,t}$ in the direction  $h_\delta$.\end{enumerate}
As is proved in Annex \ref{An:Diff}, these objects are well-defined. Differentiating the equation  with respect to $t$ gives
\begin{equation}\label{Eq:FirstParametricDerivative}
\left\{\begin{array}{ll}
-\Delta \dot u_{\delta,t}=\lambda(V^*) \dot u_{\delta,t}+V^* \dot u_{\delta,t}+\dot \lambda_{\delta,t} u_*+h_{\delta,t} u^*\, , &\text{ in }\O,
\\\dot u_{\delta,t} =0\text{ on }\partial \O,&
\\\int_\B u_*\dot u_{\delta,t}=0,&
\end{array}
\right.
\end{equation}
and
, multiplying the first equation by $u_{\delta,t}$ and integrating by parts ,
$$\dot \lambda_{\delta,t}=-\int_\B h_{\delta} u_{\delta,t}^2.$$
We apply the  mean value Theorem  to 
$f:t\mapsto \lambda_{\delta,t}.$
This gives the existence of $t_1\in [0;1]$ such that
$$\lambda(\mathcal H_\delta)-\lambda(\B^*)=f(1)-f(0)=f'(t_1)=-\int_{\B}h_\delta u_{\delta,t_1}^2.$$
 Our goal is now  to prove that 
\begin{equation}\label{Eq:Controle0}
-\int_\B h_\delta u_{\delta,t_1}^2\geq C\delta^2\end{equation} for some constant $C>0$ whenever $\delta$ is small enough. We will actually prove the existence of $\underline \delta>0$ such that, for any $t\in [0;1]$ and any $\delta\leq \underline \delta$, there holds
\begin{equation}\label{Eq:Controle1}
-\int_\B h_\delta u_{\delta,t}^2\geq C\delta^2\end{equation} for some $C>0$.
\begin{proof}[Proof of Estimate \eqref{Eq:Controle1}]
We recall that $r_\delta$ and $r_\delta'$ were defined in      \eqref{Eq:ADelta}-\eqref{Eq:ADeltaAd}. We can rewrite $\mathcal H_\delta=\chi_{\mathbb A_\delta}\in X_\delta$ under the form
$$\int_{r^*-r_\delta}^{r^*}t^{n-1}dt+\int_{r^*}^{r^*+r_\delta'}t^{n-1} dt=\frac{\delta}{c_n}$$ where $c_n=\mathcal H^{n-1}(\mathbb S(0;1))$ and the condition 
$\int_\O h_\delta=0$ implies
$$\int_{r^*-r_\delta}^{r^*}t^{n-1}dt=\int_{r^*}^{r^*+r_\delta'}t^{n-1} dt.$$
An explicit computation yields the existence of a constant $C>0$ such that
\begin{equation}\label{Eq:EquivDelta}r_\delta, r_\delta'\underset{\delta \to 0}\sim C\delta.\end{equation}
Let $I_\delta^\pm:=\{h_\delta=\pm1\}.$ Since $h_\delta$ is radial, for any $t\in [0;1]$ the function $u_{\delta,t}$ is radial. 
\paragraph{First facts regarding $u_{\delta,t}$}
Identifying $u_{\delta,t}$ (resp. $V_{\delta,t}$) with. the unidimensional function $\tilde u_{\delta,t}$  (resp. $\tilde V_{\delta,t}$) such that
$$u_{\delta,t}(x)=\tilde u_{\delta,t}(|x|) \text{ (resp. $V_{\delta,t}(x)=\tilde V_{\delta,t}(|x|)$)}$$ we have the following equation on $u_{\delta,t}$:
\begin{equation}\label{Eq:EigenFunctionUniT}
\left\{\begin{array}{ll}
-\frac1{r^{n-1}}\left(r^{n-1} u_{\delta,t}'\right)'=V_{\delta,t}u_{\delta,t}+\lambda_tu_{\delta,t}\text{ in }[0;R],&
\\u_{\delta,t}'(0)=u_{\delta,t}(1)=0,&
\\\int_0^1 xu_{\delta,t}(x)^2dx=\frac1{c_n}.$$
\end{array}
\right.
\end{equation}
Since $V_{\delta,t}$ is constant in $ (r^*-r_\delta;r^*)\cup (r^*;r^*+r_\delta')$ and since $u_{\delta,t}$ is uniformly bounded in $L^\infty$ by standard elliptic estimates, $u_{\delta,t}$ is $\mathscr C^2$ in $(r^*-r_\delta;r^*)\cup (r^*;r^*+r_\delta')$. Furthermore, Equation \eqref{Eq:EigenFunctionUniT} readily  gives the existence of a constant $M$ such that, uniformly in $\delta$ and in $t\in [0;1]$, 
\begin{equation}\label{Eq:DeriveeSecondeT}
\left|\left| u_{\delta,t}\right|\right|_{W^{2,\infty}}\leq M.
\end{equation}
Finally, it is standard to see that Equation \eqref{Eq:EigenFunctionT}  gives
\begin{equation}\left|\left| u_{\delta,t}-u_*\right|\right|_{\mathscr C^1}\underset{\delta \to 0}\rightarrow 0\end{equation} uniformly in $t$. As a consequence, since 
$$u_*'(r^*)<0$$ there exists $\underline \delta_1>0$ such that, for any $\delta\leq \underline \delta_1$ and any $t\in [0;1]$, 
\begin{equation}\label{Eq:ControleDeriveeT}
u_{\delta,t}'(r^*)\leq -C<0.
\end{equation}
\paragraph{End of the Proof}
For any $x\in I_\delta^\pm$ and any $t\in [0;1]$, a Taylor expansion gives 
$$u_{\delta,t}^2(x)=\left.u_{\delta,t}^2\right|_{\mathbb S^*}\mp\left.2u_{\delta,t}|\n u_{\delta,t}|\right|_{\mathbb S^*} dist(x;\mathbb S^*)+o\left(dist(x;\mathbb S^*)\right),$$ and $o\left(dist(x;\mathbb S^*)\right)$ is uniform in $\delta>0$ small enough and $t\in [0;1]$ by Estimate \eqref{Eq:DeriveeSecondeT}. This Taylor expansion gives
\begin{align*}
\dot\lambda_{\delta,t}&=-\int_\B h u_{\delta,t}^2&
\\&=-\left. u_{\delta,t}^2\right|_{\mathbb S^*}\int_\B h_\delta \left(=0\text{ because } \int_\B h=0\right)
\\&+\int_{I_\delta^-}\left.2u_{\delta,t}|\n u_{\delta,t}|\right|_{\mathbb S^*} dist(x;\mathbb S^*)+o\left(\int_{I_\delta^-}dist(x;\mathbb S^*)\right)
\\&+\int_{I_\delta^+}\left.2u_{\delta,t}|\n u_{\delta,t}|\right|_{\mathbb S^*} dist(x;\mathbb S^*)+o\left(\int_{I_\delta^+}dist(x;\mathbb S^*)\right)
\end{align*} where the $o\left(\int_{I_\delta^+}dist(x;\mathbb S^*)\right)$ are uniform in $t\in [0;1]$ and $\delta$. Furthermore, 
$$u_{\delta,t}|\n u_{\delta,t}|_{\mathbb S^*}\geq C>0$$ for some constant $C>0$ independent of $\delta$ and $t$ by Estimate \eqref{Eq:ControleDeriveeT}.
\\Hence
$$\dot \lambda_{\delta,t}\underset{\delta \to 0}\sim  \left.2u_{\delta,t}|\n u_{\delta,t}|\right|_{\mathbb S^*}\left(\int_{I_\delta^+}dist(x;\mathbb S^*)+\int_{I_\delta^+}dist(x;\mathbb S^*)\right).$$
However, 
\begin{align*}
  \int_{I_\delta^+}dist(x;\mathbb S^*)&=\int_{r^*}^{r^*+r_\delta'} t^{n-1}(t-r^*)dt
  \\&=(r^*)^n r_\delta'\left(\frac1{n+1}\begin{pmatrix} n+1\\n\end{pmatrix}-\frac1n\begin{pmatrix}n\\n-1\end{pmatrix}\right)
  \\&+(r^*)^{n-1}(r_\delta')^2\left(\frac1{n+1}\begin{pmatrix} n+1\\n-1\end{pmatrix}-\frac1n\begin{pmatrix}n\\n-2\end{pmatrix}\right)
  \\&+\underset{\delta \to 0}o(\delta^2)\quad \text{ by  \eqref{Eq:EquivDelta}}
  \\&=(r^*)^{n-1}(r_\delta')^2 +\underset{\delta \to 0}o(\delta^2)
  \\&\underset{\delta \to 0}\sim C\delta^2
  \end{align*} for some $C >0$ by \eqref{Eq:EquivDelta}. 
  In the same manner, 
$$\int_{I_\delta^-}dist(x;\mathbb S^*)=\int_{r^*-r_\delta}^{r^*} t^{n-1}(r^*-t)dt\underset{\delta \to 0}\sim C'\delta^2$$
and so, combining these estimates gives  
$$\dot\lambda_{\delta,t}\underset{\delta \to 0}\geq C\delta^2+o(\delta^2)$$ uniformly in $\delta$ and $t$, which concludes the proof.
\end{proof} \end{proof}

\subsection{Step 4: shape derivatives and quantitative inequality for graphs}
\subsubsection{Preliminaries and notations}
In this Subsection, we aim at proving Theorem \ref{Th:Quanti} for $V= \chi_E$ and where $E$ can be obtained as  a normal graph over $\B^*$. 
\paragraph{Introduction of the Lagrangian and optimality conditions}
We introduce the Lagrange multiplier $\tau$ associated with the volume constraint and define the Lagrangian 
$$L_\tau:E\mapsto \lambda(E)-\tau Vol(E).$$ From classical results in the calculus of variations, we have the following optimality conditions.
\begin{claim}The necessary optimality conditions for a shape $E$ to be a local minimizer (however they are not sufficient) are: 
$$\forall \Phi \in \mathcal X_1(E)\,, \lambda'(E)[\Phi]=0\,,\quad \forall \Phi \in \mathcal X_1(E)\,, L_\tau''(E)[\Phi,\Phi]>0.$$\end{claim}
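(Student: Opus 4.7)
The plan is to derive these conditions as the direct translation, into shape-derivative language, of the classical one-variable second-order necessary conditions for a constrained local minimum. First I would fix an arbitrary $\Phi \in \mathcal{X}_1(E)$ and introduce the one-parameter family of shapes $E_t := (Id + t\Phi)(E)$ for $t \in (-1,1)$. By the very definition of $\mathcal{X}_1(E)$, $\chi_{E_t}$ belongs to $\mathcal{M}(\Omega)$ for every such $t$, which in particular forces $Vol(E_t) \equiv V_0\,|\Omega|$ to be constant along the deformation. Consequently the scalar function $j(t) := L_\tau(E_t) = \lambda(E_t) - \tau V_0|\Omega|$ differs from $t \mapsto \lambda(E_t)$ only by an additive constant, and $0$ is a local minimum of $j$ as soon as $E$ is a local minimizer of $\lambda$ in the volume-constrained class $\mathcal{M}(\Omega)$.

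I would then apply the usual one-variable necessary conditions $j'(0) = 0$ and $j''(0) \geq 0$, and identify these scalar derivatives with the shape derivatives of $L_\tau$ through the chain rule together with Definition \ref{De:ShapeStab}. This gives
\begin{align*}
L_\tau'(E)[\Phi] &= \lambda'(E)[\Phi] - \tau\, Vol'(E)[\Phi] = 0, \\
L_\tau''(E)[\Phi,\Phi] &\geq 0.
\end{align*}
Since $Vol(E_t)$ is constant in $t$, all its $t$-derivatives at $0$ vanish, so the first identity collapses to $\lambda'(E)[\Phi] = 0$, while the second is the desired non-negativity. The Lagrange multiplier $\tau$ itself is determined by relaxing the volume constraint: classical Lagrange multiplier theory furnishes a single $\tau$ for which $L_\tau'(E)[\cdot]$ vanishes on \emph{every} admissible direction, not just the volume-preserving ones, and this is the value that makes $L_\tau''$ the geometrically correct Hessian on the tangent space of the constraint manifold.

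The only non-routine technical point will be to justify that $j$ is genuinely $C^2$ near $0$, so that the scalar Taylor expansion I rely on makes sense. This follows from the standard differentiability theory for simple eigenvalues of elliptic operators under smooth domain perturbations: pulling the eigenvalue problem on $E_t$ back to the reference shape $E$ through the diffeomorphism $Id + t\Phi$ produces a smooth $t$-family of second-order elliptic operators on a fixed domain, to which Kato--Rellich analytic perturbation theory applies since the first eigenvalue is simple (Lemma \ref{Le:Schwarz}) and simplicity persists under small perturbations. As for the parenthetical warning that the strict second-order inequality is not sufficient for local minimality, this records the standard infinite-dimensional fact emphasised in Subsection \ref{Su:Difference}: pointwise positivity of the shape Hessian does not automatically upgrade to a uniform coercivity estimate against a norm strong enough to control the Taylor remainder, and obtaining such coercivity is precisely the work carried out in the subsequent steps of Section \ref{Pr:Quanti}.
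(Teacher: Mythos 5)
The paper offers no proof of this claim at all --- it is asserted with the phrase ``from classical results in the calculus of variations'' --- so your proposal supplies an argument where the paper gives none, and the route you take (reduce to the scalar function $j(t)=L_\tau\bigl((Id+t\Phi)(E)\bigr)$, invoke the one-variable necessary conditions $j'(0)=0$, $j''(0)\geq 0$, and justify regularity of $j$ by pulling the eigenvalue problem back to a fixed domain and using simplicity of the first eigenvalue) is exactly the standard one the author is implicitly relying on. Two remarks are in order.

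First, and most substantively: your argument correctly yields $L_\tau''(E)[\Phi,\Phi]\geq 0$, whereas the claim as printed asserts the \emph{strict} inequality $L_\tau''(E)[\Phi,\Phi]>0$. Strict positivity of the Hessian is not a necessary condition for local minimality (a minimizer may have degenerate directions), and no argument from local minimality alone can upgrade $\geq 0$ to $>0$. You were right not to claim it; the discrepancy lies in the paper's loose phrasing, not in your proof. Nothing downstream depends on it: what the paper actually uses is the coercivity $L_\tau''(\B^*)[g,g]\geq C\|g\|_{L^2}^2$, which is \emph{proved} at the ball in Proposition~\ref{Le:SecondDerivativeBall} rather than deduced from minimality.

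Second, a point worth one sentence in a careful write-up: identifying $\tfrac12 j''(0)$ with $L_\tau''(E)[\Phi,\Phi]$ in the sense of Definition~\ref{De:ShapeStab} is not purely notational. The second $t$-derivative along the flow $(Id+t\Phi)$ differs in general from the intrinsic shape Hessian by an acceleration term of the form $L_\tau'(E)[D\Phi\,\Phi]$ (the ``structure theorem'' for second-order shape derivatives). This term vanishes here precisely because the first-order condition $L_\tau'(E)[\cdot]=0$ holds on admissible directions, so your identification is legitimate --- but it is a consequence of the first-order condition you have just established, not an automatic chain-rule fact, and the logical order should be made explicit.
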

Here, we use the notion of shape derivatives introduced in Definition \ref{De:ShapeStab}. Since we only want a local quantitative inequality for shapes that can be obtained as normal graphs over the ball $\B^*$, we introduce some notations.

\paragraph{Notations}
We consider in this parts functions $g$ belonging to 
$$\mathcal X_0(\B^*)=\left\{g \in W^{1,\infty}(\B^*)\,, ||g||_{L^\infty(\partial \B^*)}\leq 1\,, \int_{\partial \B^*}g=0\right\}.$$
Whenever $g\in \mathcal X_0(\B^*)$,  there exists $\Phi_g\in \mathcal X_1(\B^*)$ such that 
$$\langle \Phi_g,\nu\rangle=g\text{ on }\partial \B^*.$$

The set $\mathcal X_0$  corresponds to a linearization of the volume constraints for normal graphs and can be seen as a subset of the set of admissible perturbations $\mathcal X_1(\B^*)$ defined in Definition \ref{De:ShapeStab}, in the sense that we restrict admissible perturbations to normal graphs. 
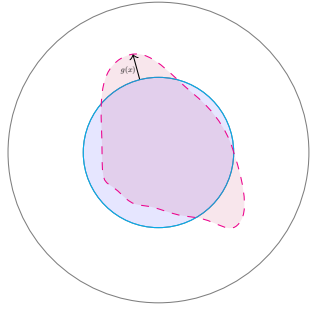
\begin{figure}[H]
\begin{center}
\begin{tikzpicture}[scale=0.5]
\draw [gray] (0,0) circle(4cm);
\draw[gray]  (0,0) circle(2cm);
\draw[ cyan,fill=blue, fill opacity=0.1]  (0,0) circle(2cm);
\draw[magenta,thin,dashed,fill=purple, fill opacity=0.1] plot [smooth, tension=1] coordinates{ (2,0)  (1,1.5)(-1,2.5) (-1.5,0) (-1,-1.1) (0,-1.5)(1,-1.732050)(2.2,-1.8)(2,0)};
\draw[->](-0.5,1.95)--(-0.68,2.61);
\draw(-0.8,2.2)node[scale=0.3] {$g(x)$};
\end{tikzpicture}
\caption{A normal deformation of the ball. The dotted line can be understood as the graph of $g$.}
\end{center}
\end{figure}
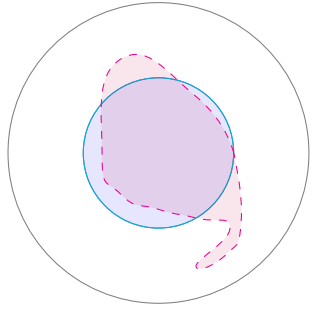
\begin{figure}[H]
\begin{center}
\begin{tikzpicture}[scale=0.5]
\draw [gray] (0,0) circle(4cm);
\draw[gray]  (0,0) circle(2cm);
\draw[ cyan,fill=blue, fill opacity=0.1]  (0,0) circle(2cm);
\draw[magenta,thin,dashed,fill=purple, fill opacity=0.1] plot [smooth, tension=1] coordinates{ (2,0)  (1,1.5)(-1,2.5) (-1.5,0) (-1,-1.1) (0,-1.5)(1,-1.732050)  (1.9,-2) (1,-3) (1.8,-2.7) (2.2,-1.8)(2,0)};
\end{tikzpicture}
\caption{A perturbation of the ball which can not be seen as the graph of a function}
\end{center}
\end{figure}

We define, for any  $g\in \mathcal X_0(\B^*)$ and any $t\in [0;T]$ (with $T$ uniform in $g$ because of the $L^\infty$ constraint) the set 
$\B_{t,g}$ whose boundary is defined as 
$$\partial \B_{t,g}:=\left\{x+tg(x)\nu(x)\,, x \in \partial \B^*\right\}$$ i.e a slight deformation of $E^*$. 
\\We define $$\lambda_{g,t}:=\lambda\left(\B_{t,g}\right).$$
Recall that $\tau$ is the Lagrange multiplier associated with the volume constraint. By defining $L_\tau'(\B^*)[g]:=L_\tau'(\B^*)[\Phi_g]$  and $L_\tau''(\B^*)[g,g]:=L_\tau''(\B^*)[\Phi_g,\Phi_g]$, and with the same convention for other shape functionals involved,  necessary optimality conditions are
\begin{equation}\label{Eq:ShapeOptCond}\forall g\in \mathcal X_0(\B^*)\,, L_\tau'(\B^*)[g]=0\,, L_\tau''(\B^*)[g,g]>0.\end{equation}
We first prove that these optimality conditions hold in the case of a ball and then use them to obtain Theorem \ref{Th:Quanti} for normal perturbations.

\subsubsection{Strategy of proof and comment on the coercivity norm}\label{Su:Strategy}
The strategy of proof is the same as the one used in many articles devoted to quantitative spectral inequalities. For example, we refer to \cite{AcerbiFuscoMoreni,BDPV} for applications of these methods and to the recent \cite{DambrineLamboley}, which presents a general framework for the study of stability and local quantitative inequalities using second order shape variations.
\\Although our method of proof is similar, we point out that the main thing to be careful with here is the coercivity norm for the second-order shape derivative. Indeed, let $J:E\mapsto J( E)$ be a differentiable shape function. In the context of shape spectral optimization, the "typical" coercivity norm at a local minimum $E^*$ for $J$ is the $H^{\frac12}$ norm: in \cite{DambrineLamboley} a summary of shape functionals known to satisfy 
$$J''(E^*)[\Phi,\Phi]\geq C ||\langle \Phi,\nu\rangle||_{H^{\frac12}(\partial E*)}^2$$ is established and proofs of thess coercivity properties are given.  In this estimate, $\Phi$ is an admissible vector field at $E^*$.
\\Here, in the context of parametric shape derivatives, i.e when the shape is a subdomain, it appears (see Subsubsection \ref{Susu:D}) that the natural coercivity norm is the $L^2$ norm:$$L_\tau''(E^*)[\Phi,\Phi]\geq C ||\langle \Phi,\nu\rangle||_{L^{2}(\partial \mathcal A^*)}^2$$  and this coercivity norm is optimal. This makes  things a bit more complicated when dealing with the terms of the second order derivative that involve the mean curvature. This lack of coercivity might be accounted for by the fact that, while in shape optimization, it is the normal derivative of the shape derivative of the eigenfunction that is involved (see \cite{DambrineLamboley}) here, it is just the trace of the shape derivative of the eigenfunction on the boundary of the optimal shape that matters.
\\Once this $L^2$ coercivity is established, we will prove that there exists a constant $\xi,M,C>0$ such that, for any $g$ satisfying $||g||_{W^{1,\infty}}\leq \xi$ and such that the mean curvature of $\mathbb B_{g,t}$ is bounded by $M$ for any $t\leq T$, there holds
\begin{equation}\label{Eq:Valloire}\left| L_\tau''(\B_{t,g})[\Phi_g,\Phi_g]-L_\tau''(\B^**)[\Phi_g,\Phi_g]\right|\leq (C+M)||g||_{W^{1,\infty}} ||g||_{L^2}^2.\end{equation} We then apply the Taylor-Lagrange formula to $f:t\mapsto L_\tau(\B_{t,g})$ to get the desired conclusion, see Subsubsection \ref{Susu:ConclStep4}.

\subsubsection{Analysis of the first order shape derivative at the ball  and computation of the Lagrange multiplier}
The aim of this section is to prove the following Lemma:
\begin{lemma}\label{Le:CriticalityBall}
$\B^*$ is a  critical shape and the Lagrange multiplier associated with the volume constraint is 
$$\tau=-u_*^2|_{\partial \B^*}.$$\end{lemma}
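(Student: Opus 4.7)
The plan is to compute the first-order shape derivative of $\lambda$ at a general shape $E\subset\O$ via the Rayleigh formulation \eqref{Eq:Rayleigh} and then specialise at $\B^*$ using the radial symmetry of $u_*$ provided by Lemma \ref{Le:Schwarz}. Fix $\Phi\in W^{1,\infty}(\O;\R^2)$ (without imposing any volume constraint at this stage), set $E_t:=(\mathrm{Id}+t\Phi)(E)$ and let $u_t$ be the normalised first eigenfunction of $\mathcal L_{\chi_{E_t}}$. By simplicity of $\lambda$, the map $t\mapsto u_t$ is differentiable in $W^{1,2}_0(\O)$ at $t=0$; this is the standard consequence of pulling back to $E$ along $\mathrm{Id}+t\Phi$ and applying the implicit function theorem to the transported eigenvalue problem, which I would defer to Annex \ref{An:Diff}. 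The derivative $\dot u$ satisfies $\int_\O u_E\dot u=0$, inherited from the normalisation $\int_\O u_t^2=1$.

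Differentiating
\[\lambda(t)=\int_\O|\n u_t|^2-\int_{E_t}u_t^2\]
at $t=0$ produces two contributions: a bulk term $2\int_\O \n u_E\cdot\n\dot u-2\int_E u_E\dot u$, and a Hadamard boundary term $-\int_{\partial E}u_E^2\langle\Phi,\nu\rangle\,d\Hn$ from differentiating the moving domain. An integration by parts in the gradient term, combined with the eigenvalue equation $-\Delta u_E=\chi_E u_E+\lambda(E)u_E$, reduces the bulk contribution to $2\lambda(E)\int_\O u_E\dot u=0$, leaving
\[\lambda'(E)[\Phi]=-\int_{\partial E}u_E^2\,\langle\Phi,\nu\rangle\,d\Hn,\]
the trace being well defined because elliptic regularity makes $u_E$ continuous across $\partial E$ (the potential $\chi_E$ is merely $L^\infty$). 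Combining with the classical formula $\mathrm{Vol}'(E)[\Phi]=\int_{\partial E}\langle\Phi,\nu\rangle\,d\Hn$ yields
\[L_\tau'(E)[\Phi]=-\int_{\partial E}\bigl(u_E^2+\tau\bigr)\,\langle\Phi,\nu\rangle\,d\Hn.\]

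At $E=\B^*$, Lemma \ref{Le:Schwarz} asserts that $u_*$ is radial, so the trace $u_*^2|_{\mathbb S^*}$ is a single real constant. Choosing $\tau:=-u_*^2|_{\partial\B^*}$ makes $L_\tau'(\B^*)[\Phi]$ vanish identically in $\Phi\in W^{1,\infty}(\O;\R^2)$, and \emph{a fortiori} for every admissible $\Phi\in\mathcal X_1(\B^*)$; this simultaneously proves criticality of $\B^*$ for $L_\tau$ and identifies the Lagrange multiplier. The main subtlety I expect is the shape-differentiability of $u_t$, since the potential $\chi_{E_t}$ is only $L^\infty$ and its discontinuity moves with $t$; this is handled by the transport trick above, which converts the moving jump into a fixed one with smoothly varying transported coefficients.
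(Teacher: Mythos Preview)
Your argument is correct and reaches the same formula $\lambda'(E)[\Phi]=-\int_{\partial E}u_E^2\langle\Phi,\nu\rangle$ as the paper, but by a somewhat more direct route. The paper instead derives the full equation satisfied by the shape derivative $u'_g$ of the eigenfunction, including the jump condition $[\partial_\nu u'_g]=-g\,u_*|_{\partial\B^*}$, and then tests this equation against $u_*$ to extract $\lambda'_g$; the bulk terms cancel by the equation on $u_*$, and one is left with the boundary integral. Your approach bypasses this by differentiating the Rayleigh quotient directly via Hadamard's formula and killing the bulk contribution with the eigenvalue equation and the orthogonality $\int_\O u_*\dot u=0$. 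This is cleaner for the purpose of the lemma alone. The paper's longer route, however, is not wasted: the equation for $u'_g$ (in particular the jump condition) is precisely what is needed in the second-order analysis of $L_\tau''(\B^*)$ and in the diagonalisation of the shape Hessian via the functions $\psi_k$, so the paper is effectively front-loading machinery that you would have to introduce later anyway.
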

\begin{proof}[Proof of Lemma \ref{Le:CriticalityBall}]
We recall (see \cite{HenrotPierre}) that  
$$Vol'(\B^*)[g]=\int_{\partial \B^*}g\left(=0\text{ if } g\in \mathcal X_0(\B^*)\right).$$
We now compute the first order shape derivative of $\lambda$. The shape differentiability of $\lambda$  follows from an application of the implicit function Theorem of Mignot, Murat and Puel, \cite{MignotMuratPuel}, and  is proved in Appendix \ref{An:Diff}.
\\Let, for any $g\in \mathcal X_0(\B^*)$, $u'_g$ be the shape derivative of the $u_{\B_{t,g}}$ at $t=0$. We recall that this derivative is defined as follows (see \cite[Chapitre 5]{HenrotPierre} for more details): we first define $v_t:=u_{g,t}\circ \Phi_{g,t}$, we define $\dot u_g$ as the derivative in $W^{1,2}_0(\O)$ of $v_t$ with respect to $t$ at $t=0$, and set 
$$u'_g:=\dot u_g-\langle \Phi_g,\n u_0\rangle.$$
We proceed formally to get the equation on $u'_g$ (for rigorous computations we refer to Appendix \ref{An:Diff}): we first differentiate the main equation 
$$-\Delta u_{g,t}=\lambda_{g,t} u_{g,t}+V_{g,t} u_{g,t}$$ with respect to $t$, yielding
$$-\Delta u_g'=\lambda_g' u_*+\lambda_* u_g'+(V^*) u_g'.$$
We then differentiate the continuity equations to get the jump conditions: if we define $$[f](x):=\lim_{y\to x,y\in (\B^*)^c} f(y)-\lim_{y\to x,y\in \B^*} f(y),$$ we have
 $$\left.\left[u_{g,t}\right]\right|_{\partial \B_{t,g}}=\left.\left[\frac{\partial u_{g,t}}{\partial \nu}\right]\right|_{\partial \B_{t,g}}=0,$$
yielding
$$\left.\left[u'_g\right]\right|_{\partial \B^*}=-g\left.\left[\frac{\partial u_{*}}{\partial \nu}\right]\right|_{\partial \B^*}=0$$ because $u_*$ is $\mathscr C^1$, and 
$$\left.\left[\frac{\partial u'_g}{\partial \nu}\right]\right|_{\partial \B^*}=-g\left.\left[\frac{\partial^2 u_*}{\partial \nu^2}\right]\right|_{\partial \B^*}.$$
However, from the Equation on $u^*$ we see that
$$\left.\left[\frac{\partial^2 u_*}{\partial \nu^2}\right]\right|_{\partial \B^*}=u_*|_{\partial \B^*},$$ so that we finally have the following equation on $u'_g$:
\begin{equation}\label{Eq:Derivative1General}
\left\{
\begin{array}{ll}
-\Delta u'_g=\lambda'_g u_*+\lambda_* u_*+(V^*)u_*&\text{ in }\B(0;R),
\\\left[\frac{\partial u_g'}{\partial \nu}\right]=-g u_*|_{\partial \B^*}.&
\end{array}
\right.
\end{equation}
The weak formulation of this equation reads: for any $\p\in W^{1,2}_0(\B)$, 
\begin{equation}\label{Eq:WFD1}\int_\B \langle \n u_g',\n \p\rangle-\int_{\partial \B^*} gu_* \p =\lambda'_g \int_\B \p u_*+\lambda_* \int_\B \p u_*+\int_\B (V^*)\p u_*.\end{equation}
We finally remark that, by differentiating $\int_\B u_{g,t}^2=1$, we get
$$\int_\B u_* u'_g=0.$$
Taking $u_*$ as a test function in \eqref{Eq:WFD1} and using the normalization condition $\int_\B u_*^2=1$ thus gives
\begin{equation}
\lambda'_g=\int_\B \langle \n u_*,\n u_g'\rangle-\lambda_*\int_\B u_g'u_*-\int_\B (V^*) u_*u_g'-\int_{\partial \B^*} g u_*^2.
\end{equation}
However, since $u'_g$ does not have a jump at $\partial \B^*$, we have 
$$\int_\B \langle \n u_*,\n u_g'\rangle-\lambda_*\int_\B u_g'u_*-\int_\B (V^*) u_*u_g'=0$$ by using the Equation on $u_*$.
\\In the end, we get
$$\lambda_g'=-\int_{\partial \B^*} g u_*^2.$$ Since $u_*^2=\mu_*$ is constant on $\partial \B^*$ and $g\in \mathcal X_0(\B^*)$, 
$$\lambda_g'=-\mu_*\int_{\partial \B^*} g=0.$$
This also enables us to compute the Lagrange multiplier: for a function $g\in W^{1,\infty}(\partial \B^*)$ which is no longer assumed to satisfy $\int_{\partial \B^*}g=0$, one must have 
$$L_\tau'[\B^*](g)=0.$$ Indeed, we know, from Lemma \ref{Le:Schwarz}, that $\B^*$ is the unique minimizer of $\lambda$ under the volume constraint.
\\However, the same computations show that 
$$L_\tau'[\B^*](g)=-\mu^*\int_{\partial \B^*} g-\tau \int_{\partial \B^*} g,$$ and the Lagrange multiplier is thus
\begin{equation}\label{Eq:LM}\tau=-\mu^*=-u_*^2|_{\B^*}.\end{equation}
\end{proof}
We now compute the second order shape derivative of $L_\tau$ at any given shape.

\subsubsection{Computation of the second order shape derivative of $\lambda$}
We explained in Subsection \ref{Su:Strategy} that we need to compute the second order derivative at any given shape in order to apply the Taylor-Lagrange formula. Thus, the objectif of this section is the proof of the following Lemma:
\begin{lemma}\label{Le:SecondDerivative}
The second order derivative of the eigenvalue $\lambda$ at a shape $E$ in the direction $\Phi\in \mathcal X_1(E)$ is given by
$$\lambda''(E)[\Phi,\Phi]=-2\int_{\partial E}uu'\langle \Phi,\nu\rangle+2\int_{\partial E}\frac{\partial u}{\partial \nu}\left(\left[\n^2 u[\Phi,\Phi]\right]-\left[\frac{\partial^2u}{\partial \nu^2}\right]\langle \Phi,\nu\rangle^2\right)+\int_{\partial E}\left(-H u^2-2 u\frac{\partial u}{\partial \nu}\right)\langle \Phi,\nu\rangle^2,$$
where $u'$ is defined by Equation \eqref{Eq:Derivative1General} and $H$ is the mean curvature of $E$.

\end{lemma}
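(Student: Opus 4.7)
My plan is to extend the first order derivative computation of Lemma \ref{Le:CriticalityBall} to an arbitrary shape $E$ and then differentiate once more, keeping careful track of the boundary contributions.

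First, I would repeat the arguments of Lemma \ref{Le:CriticalityBall} verbatim at a general shape $E$ (still with $V=\chi_E$), with $u_E$ in place of $u_*$: the shape derivative $u'$ in the direction $\Phi\in\mathcal X_1(E)$ satisfies the analog of \eqref{Eq:Derivative1General}, with jumps $[u']=0$ (because $u_E$ is $\mathscr C^1$ across $\partial E$) and $[\partial_\nu u']=-\langle\Phi,\nu\rangle\,[\partial^2_{\nu\nu} u_E]=-\langle\Phi,\nu\rangle\, u_E$ (the jump of $\Delta u_E$ across $\partial E$ being $-u_E$ because $\chi_E$ jumps by $-1$). Testing the weak form of the equation on $u'$ against $u_E$ and using the normalization $\int u_E u'=0$ then yields the Hadamard formula
$$\lambda'(E)[\Phi]=-\int_{\partial E}\langle\Phi,\nu\rangle\, u_E^2.$$
The derivation is identical to that in Lemma \ref{Le:CriticalityBall}; what one loses at a general $E$ is merely the simplification coming from $u_E^2$ being constant on $\partial E$.

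Next, setting $\Psi(t):=\lambda(E_t)$ with $E_t:=(Id+t\Phi)(E)$, one has $\Psi''(0)=\lambda''(E)[\Phi,\Phi]$. I would compute $\Psi''(0)$ by differentiating the previous identity once more in $t$, using the classical formula for the $t$-derivative of a boundary integral depending on the shape, namely
$$\frac{d}{dt}\Big|_{t=0}\int_{\partial E_t} f_t=\int_{\partial E} f'+\int_{\partial E}\big(\partial_\nu f+Hf\big)\langle\Phi,\nu\rangle,$$
applied to the integrand $f=\langle\Phi,\nu\rangle\, u_E^2$. This produces three kinds of terms: the shape derivative of $u_E^2$ contributes $-2\int_{\partial E} u u'\langle\Phi,\nu\rangle$; the geometric factor $(\partial_\nu f+Hf)\langle\Phi,\nu\rangle$, after expanding $\partial_\nu(\langle\Phi,\nu\rangle u_E^2)$ and combining with the shape derivative of the outer normal $\nu$, contributes $\int_{\partial E}(-Hu^2-2u\partial_\nu u)\langle\Phi,\nu\rangle^2$; and the non-smoothness of $u_E^2$ across $\partial E$ (since $\partial^2_{\nu\nu}u_E$ jumps) produces the bracket term $2\int_{\partial E}\partial_\nu u\big([\n^2 u[\Phi,\Phi]]-[\partial^2_{\nu\nu} u]\langle\Phi,\nu\rangle^2\big)$, written in this unsimplified form to separate the contribution of the full Hessian from that of its purely normal part.

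The main obstacle in carrying out this calculation is the bookkeeping of the boundary contributions, and in particular the correct handling of the variation of the outward unit normal $\nu_t$ and of the decomposition $\Phi=\langle\Phi,\nu\rangle\nu+\Phi_\tau$ when expanding $\n^2 u[\Phi,\Phi]$ across the interface. A safer but heavier alternative, which I would use to check the answer, is to transport $u_t$ back to $E$ via $v_t:=u_t\circ(Id+t\Phi)$, expand the transformed Rayleigh quotient
$$\lambda(E_t)=\int_\O |(I+t\n\Phi)^{-T}\n v_t|^2-\int_E v_t^2\det(I+t\n\Phi)$$
to second order in $t$ under the constraint $\int_\O v_t^2\det(I+t\n\Phi)=1$, and identify the coefficient of $t^2/2$. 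Either approach becomes purely algebraic once the jump conditions on $u'$ and on the second normal derivative of $u_E$ are in hand.
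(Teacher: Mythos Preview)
Your fallback approach (transport $u_t$ back and expand the transformed Rayleigh quotient to second order) is essentially what the paper does: it writes $\lambda(E_t)=\int_{E_t}(|\nabla u_t|^2-u_t^2)+\int_{E_t^c}|\nabla u_t|^2$, applies Hadamard's second variation formula \eqref{Eq:Hada2} to each piece, and then simplifies using the weak equations for $u$, $u'$, the normalization $\int u''u+\int(u')^2=0$, and the jump of the second shape derivative $[u'']=-2[\partial_\nu u']\langle\Phi,\nu\rangle-[\nabla^2 u[\Phi,\Phi]]$ (obtained from the $W^{2,2}$ regularity of the material derivative). The bracket term in the stated formula comes out as the sum of two separate contributions: $[\nabla^2 u[\Phi,\Phi]]$ from the jump of $u''$, and $-[\partial^2_{\nu\nu}u]\langle\Phi,\nu\rangle^2$ from the $\partial_\nu f\cdot g^2$ part of Hadamard's formula.

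Your primary approach (differentiate the first-order boundary formula $\lambda'(E_t)=-\int_{\partial E_t}\langle\Phi,\nu_t\rangle u_t^2$) is a genuinely different route and can be made to work, but your identification of where the bracket term comes from is off. The jump of $\partial^2_{\nu\nu}u$ does \emph{not} enter the boundary-differentiation formula you quote, which only sees $f$, $f'$ and $\partial_\nu f$; and in fact, since $u\in\mathscr C^1$ across $\partial E$, the jump $[\nabla^2 u]$ is purely in the $\nu\otimes\nu$ direction, so $[\nabla^2 u[\Phi,\Phi]]-[\partial^2_{\nu\nu}u]\langle\Phi,\nu\rangle^2$ vanishes identically---the formula in the lemma is simply not fully simplified. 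What you \emph{do} need to track carefully in your approach, and what you gloss over, is the $t$-dependence of the Eulerian velocity $\Phi\circ(Id+t\Phi)^{-1}$ and of the normal $\nu_t$; these produce extra first-order-type terms that must cancel. The paper's route via Hadamard on the energy sidesteps this bookkeeping, at the price of having to compute $[u'']$.
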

\begin{proof}[Proof of Lemma \ref{Le:SecondDerivative}]
To compute $\lambda''(E)[\Phi,\Phi]$, we use Hadamard's second variation formula (see \cite[Chapitre 5, page 227]{HenrotPierre}):  let $K$ be a $\mathscr C^2$ domain, $f(t)$ be a shape differentiable function , then 
\begin{equation}\label{Eq:Hada2}
  \left.\frac{d^2}{dt^2}\right|_{t=0}\int_{K_{\Phi,t}} f(t)=\int_{K} f''(0)+2\int_{\partial K} f'(0)g+\int_{\partial K}\left(H f(0)+\frac{\partial f(0)}{\partial \nu}\right)g^2.\end{equation}
Let $u'$ be the shape derivative of $u_E$ with respect to $t$ and $u''$ the second order shape derivative of $u_E$ with respect to $t$. We successively  apply \eqref{Eq:Hada2} to $E_{\Phi,t}$ and $f(t)=|\n u_t|^2-u_t^2$ and to $(E_{\Phi,t})^c$ and $f(t)=|\n u_t|^2.$
\\Since $\lambda(E_{\Phi,t})=\int_{E_{\Phi,t}} \left(|\n u_t|^2-u_t^2\right)+\int_{E_{\Phi,t}^c}|\n u_t|^2$, this gives
\begin{align}
\lambda''(E)[\Phi,\Phi]=&2\int_\O \langle \n u'',\n u\rangle-2\int_E u''u
\\&+2\int_\O |\n u'|^2-2\int_E (u')^2
\\\label{Li:Jump}&+4\int_{\partial E}\left[\langle \n u,\n u'\rangle\right]\langle \Phi,\nu\rangle-4\int_{\partial E}u'u\langle \Phi,\nu\rangle
\\&+\int_{\partial E}\left(-H u^2-2\frac{\partial u}{\partial \nu}\left[\frac{\partial^2 u}{\partial \nu^2}\right]-2u\frac{\partial u}{\partial \nu}\right)\langle \Phi,\nu\rangle^2.
\end{align}
Let us simplify this expression: First of all the weak formulation of the equation on $u'$ gives
$$2\int_\O |\n u'|^2-2\int_E (u')^2=\underbrace{2\lambda'\int_\O u'u}_{=0\text{ since }\int_\O u'u=0}+2\lambda\int_\O (u')^2-2\int_{\partial E}\left[u'\frac{\partial u'}{\partial \nu}\right].$$
We also note that by differentiating $\int_\O u_t^2=1$ twice with respect to $t$ we get 
\begin{equation}\label{Eq:Nor}\int_\O u''u+\int_\O (u')^2=0.\end{equation}
We note one last simplification to handle Line \eqref{Li:Jump} in the expression for $\lambda''$: we decompose 
$$\n u= \frac{\partial u}{\partial \nu} \nu+\n^\perp u\,, \left\langle\n^\perp u,\nu\right\rangle=0.$$ We adopt the same decomposition for $u'$ and notice that, since $u'$ does not have a jump at $\partial E$,
$$\left[\n^\perp u\right]=\vec{0}.$$ The notation $\vec{0}$ stands for the zero vector in $\R^n$. The same holds true for $u$, and so, since $\frac{\partial u}{\partial \nu}$ has no jump at $\partial E$, we get
$$\left[\langle \n u,\n u'\rangle\right]=\frac{\partial u}{\partial \nu}\left[\frac{\partial u'}{\partial \nu}\right].$$
Finally, using the weak formulation of the equation on $u$ (Equation \eqref{Eq:EigenFunction}) we get
$$2\int_\O \langle \n u'',\n u\rangle-2\int_E u''u=2\lambda \int_\O u''u-\int_{\partial E}[u'']\frac{\partial u}{\partial \nu}.$$
We then only need to compute the jump $[u'']$ at $\partial E$. However, invoking the $W^{2,2}$ regularity of the material derivative $\ddot u$, we get for the shape derivative $u''$: 
$$[u'']=-2\left[\n u'[\Phi]\right]-\left[\n u[D\Phi(\Phi)]\right]-\left[\n^2 u\left[[\Phi,\Phi]\right]\right].$$
We now use the fact that 		$$\left[\n u\right]=\left[\n^\perp u\right]=\left[\n^\perp u'\right]=0\text{ on }\partial E$$ to rewrite
$$[u'']=-2\left[\frac{\partial u'}{\partial \nu}\right]\langle \Phi,\nu\rangle-\left[\n^2 u\left[[\Phi,\Phi]\right]\right].$$
 If we gather these expressions we get 
 \begin{align*}
 \lambda''(E)[\Phi,\Phi]&=2\lambda\int_\O u''u+2\lambda\int_\O (u')^2\left(=0\text{ because of \eqref{Eq:Nor}}\right)
 \\&+\cancel{4\int_{\partial E}\frac{\partial u}{\partial \nu}\left[\frac{\partial u'}{\partial \nu}\right]\langle \Phi,\nu\rangle}+2\int_{\partial E}\frac{\partial u}{\partial \nu}\left[\n^2 u\left[[\Phi,\Phi]\right]\right]
 \\&+2\int_{\partial E} u' u
 \\&-\cancel{4\int_{\partial E} \frac{\partial u}{\partial \nu}\left[\frac{\partial u'}{\partial \nu}\right]\langle \Phi,\nu\rangle}-4\int_{\partial E}u'u\langle \Phi,\nu\rangle
 \\&+\int_{\partial E}\left(-H u^2-2\frac{\partial u}{\partial \nu}\left[\frac{\partial^2 u}{\partial \nu^2}\right]-2 u\frac{\partial u}{\partial \nu}\right)\langle \Phi,\nu\rangle^2
 \\&=-2\int_{\partial E}uu'\langle \Phi,\nu\rangle+2\int_{\partial E}\frac{\partial u}{\partial \nu}\left(\left[\n^2 u[\Phi,\Phi]\right]-\left[\frac{\partial^2u}{\partial \nu^2}\right]\langle \Phi,\nu\rangle^2\right)
 \\&+\int_{\partial E}\left(-H u^2-2 u\frac{\partial u}{\partial \nu}\right)\langle \Phi,\nu\rangle^2
 \end{align*}
\end{proof}

\subsubsection{Analysis of the second order shape derivative at the ball}\label{Susu:D}
The aim of this paragraph is to prove the following Lemma:
\begin{proposition}\label{Le:SecondDerivativeBall}
There exists a constant $C>0$ such that
$$\forall g \in \mathcal X_0(\B^*)\,, L_\tau''[B^*](g,g)\geq C||g||_{L^2(\partial \B^*)}^2.$$
\end{proposition}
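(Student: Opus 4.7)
My plan is to specialise Lemma \ref{Le:SecondDerivative} to $E=\B^*$, exploit the radial symmetry of $u_*$ to collapse the expression to a boundary quadratic form, diagonalise it by Fourier series on $\mathbb S^*$, and conclude with a one-dimensional comparison principle for a family of Bessel-type ODEs.

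\emph{Step 1: reduction to a boundary quadratic form.} I would choose the canonical normal extension $\Phi_g$ with $\Phi_g|_{\partial\B^*}=g\nu$. On $\partial\B^*$, both $u_*$ and $\partial_\nu u_*=u_*'(r^*)<0$ are constants, the mean curvature $H=1/r^*$ is constant, and the tensorial jump $[\nabla^2 u_*[\Phi_g,\Phi_g]]-[\partial_\nu^2 u_*]g^2$ vanishes because $\Phi_g$ is purely normal there. Since $\mathrm{Vol}''(\B^*)[\Phi_g,\Phi_g]=\int_{\partial\B^*}Hg^2$ for normal perturbations, adding $-\tau\,\mathrm{Vol}''$ with $\tau=-u_*^2|_{\partial\B^*}$ from Lemma \ref{Le:CriticalityBall} exactly cancels the $-Hu_*^2$ term in Lemma \ref{Le:SecondDerivative}. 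One obtains
$$L_\tau''(\B^*)[\Phi_g,\Phi_g]=-2u_*|_{\partial\B^*}\int_{\partial\B^*}u_g'\,g+2u_*|_{\partial\B^*}|u_*'(r^*)|\int_{\partial\B^*}g^2.$$

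\emph{Step 2: Fourier diagonalisation.} Writing $g=\sum_{k\geq 1}(a_k\cos(k\theta)+b_k\sin(k\theta))$ (the mode $k=0$ is absent since $\int_{\partial\B^*}g=0$), the first-order Lagrange multiplier $\lambda_g'=-u_*^2|_{\partial\B^*}\int_{\partial\B^*}g$ vanishes, so $u_g'$ solves $-\Delta u_g'-V^*u_g'-\lambda_*u_g'=0$ away from $\partial\B^*$, with $[\partial_\nu u_g'](r^*)=-u_*|_{\partial\B^*}g$. By rotational invariance $u_g'(r,\theta)=\sum_{k\geq 1}R_k(r)(a_k\cos(k\theta)+b_k\sin(k\theta))$, where each $R_k$ solves the radial Bessel-type equation
$$L_kR_k:=-R_k''-\frac{1}{r}R_k'+\frac{k^2}{r^2}R_k-(\lambda_*+V^*)R_k=0$$
on $(0,r^*)\cup(r^*,R)$, with $R_k$ bounded at $0$, $R_k(R)=0$, continuity at $r^*$ and $[R_k'](r^*)=-u_*|_{\partial\B^*}$. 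Setting $\sigma_k:=R_k(r^*)$ and invoking trigonometric orthogonality,
$$L_\tau''(\B^*)[\Phi_g,\Phi_g]=2u_*|_{\partial\B^*}\sum_{k\geq 1}\bigl(|u_*'(r^*)|-\sigma_k\bigr)\pi r^*(a_k^2+b_k^2),$$
and since $\|g\|_{L^2(\partial\B^*)}^2=\pi r^*\sum(a_k^2+b_k^2)$, the announced $L^2$-coercivity is equivalent to the uniform gap $\kappa:=\inf_{k\geq 1}(|u_*'(r^*)|-\sigma_k)>0$.

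\emph{Step 3: comparison principle.} This uniform gap is the main obstacle, and the natural place to invoke the comparison principle announced in Subsection \ref{Su:Strategy}. The key observation is that $|u_*'|=-u_*'$, obtained by differentiating the radial equation for $u_*$, itself solves $L_1|u_*'|=0$ on $(0,r^*)\cup(r^*,R)$ with exactly the interface condition $[|u_*'|'](r^*)=-u_*(r^*)$ required of $R_1$, but fails the Dirichlet condition at $r=R$ since $|u_*'(R)|>0$ by the Hopf lemma. Decomposing $R_1=|u_*'|+\psi$, the correction $\psi$ solves $L_1\psi=0$ on all of $(0,R)$, has no jump at $r^*$ and satisfies $\psi(R)=-|u_*'(R)|<0$; the existence of the strictly positive solution $|u_*'|$ of $L_1$ shows that the first Dirichlet eigenvalue of $L_1$ on $(0,R)$ is strictly positive (because the extra centrifugal term $1/r^2$ shifts the zero eigenvalue of the $k=0$ operator $-\Delta-V^*-\lambda_*$ upwards), so the maximum principle applies and yields $\psi<0$, hence $\sigma_1<|u_*'(r^*)|$ strictly. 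For $k\geq 2$ the identity $L_k=L_1+(k^2-1)/r^2$ combined with the same jump produces, by comparison, $\sigma_k\leq\sigma_1$; finally, Bessel asymptotics give $\sigma_k\to 0$ as $k\to\infty$. Combining these two regimes yields the positive gap $\kappa=|u_*'(r^*)|-\sigma_1>0$, whence the desired $L^2$-coercivity. The same picture explains why $L^2$ is the optimal norm here: the sequence $\{\sigma_k\}$ stays bounded in $k$, so no smoothing is available, in sharp contrast with standard shape-optimisation problems where the analogous sequence grows linearly in $k$ and forces an $H^{1/2}$ coercivity.
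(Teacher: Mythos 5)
Your proof follows essentially the same route as the paper's: the same reduction of $L_\tau''(\B^*)$ to a boundary quadratic form via Lemma \ref{Le:SecondDerivative} and the Lagrange multiplier of Lemma \ref{Le:CriticalityBall}, the same Fourier diagonalisation with radial profiles $R_k$ solving \eqref{Eq:PsiK} (your $\sigma_k=R_k(r^*)$ is the paper's $\psi_k(r^*)$, your gap $|u_*'(r^*)|-\sigma_k$ is the paper's $\omega_k$), and the same key decomposition $R_1=|u_*'|+\psi$, your $\psi$ being exactly the paper's $\Psi=u_*'+\psi_1$. Your invocation of the positivity of the principal eigenvalue of $L_1$ in the $\cos\theta$-sector is an equivalent packaging of the paper's explicit ground-state substitution $\Theta=\Psi/u_*$, and the justification you give (the centrifugal term $1/r^2$ shifts the zero eigenvalue of the radial sector upwards; equivalently, simplicity of $\lambda_*$ forces the first eigenvalue in any non-radial sector to exceed $\lambda_*$) is correct.

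The one step that does not close as written is the monotonicity $\sigma_k\leq\sigma_1$ for $k\geq 2$. Since $L_k(R_k-R_1)=-\frac{k^2-1}{r^2}R_1$ (the interface deltas coming from the common jump cancel), the comparison principle only yields $R_k\leq R_1$ if you already know $R_1\geq 0$; this is precisely the paper's intermediate estimate \eqref{Eq:PositivitePsi1}, proved there by a separate ground-state argument, and "by comparison" is not a proof without it. In your framework the missing sign is immediate: distributionally $L_kR_k$ equals a positive multiple of $\delta_{r^*}$ (because $[R_k'](r^*)=-u_*(r^*)<0$), $R_k$ has zero boundary data, and the principal eigenvalue of $L_k$ is positive, so $R_k\geq 0$ for every $k$ -- but this line must be stated. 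With it the argument is complete, and the Bessel asymptotics $\sigma_k\to 0$ become superfluous, since $\sigma_k\leq\sigma_1<|u_*'(r^*)|$ already gives the uniform gap.
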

We note that the proof of this Lemma relies on a monotonicity principle, which guarantees the weak $L^2$ coercivity. In fact, this is to be the optimal coercivity, in sharp contrast with shape optimization with respect to the boundary of the whole domain $\partial \O$, where the optimal coercivity usually occurs in the $H^{\frac12}$ norm, as noted in Subsection \ref{Su:Strategy}.
\begin{proof}[Proof of Proposition \ref{Le:SecondDerivativeBall}]
We proceed in several steps. We identify $g$ with the normal vector field $\Phi_g$ that can be constructed from $g$, and, to alleviate notations, write $\Phi=\Phi_g$.
\begin{enumerate}
\item \underline{Computation of $\lambda''$}
We use Lemma \ref{Le:SecondDerivative} and first note that, since the vector field $\Phi$ associated with $g$ is normal,
$$\left[\n^2 u[\Phi,\Phi]\right]=\left[\frac{\partial^2u}{\partial \nu^2}\right]\langle \Phi,\nu\rangle^2.$$  In the case of a ball, $H=\frac1{r^*}$. For notational simplicity, we stick to the notation 
$$H^*=\frac1{r^*}.$$
The second derivative  of $\lambda$ becomes
\begin{equation*}
\lambda''(\B^*)[\Phi,\Phi]=\int_{\partial \B^*}\left(-H^*u_*^2-2u_*\frac{\partial u_*}{\partial\nu}\right)\langle \Phi,\nu\rangle^2-2\int_{\partial \B^*}u_*u'\langle \Phi,\nu\rangle.
\end{equation*}
Taking into account the value of the Lagrange multiplier $\tau$ associated with the volume constraint, see Equation \eqref{Eq:LM}, and 
$$Vol''(\B^*)[\Phi,\Phi]=\int_{\partial \B^*}H^*\langle \Phi,\nu\rangle^2$$ we get
\begin{equation}\label{Eq:SecondDerivativeBall}
L_\tau''(\B^*)[\Phi,\Phi]=2\int_{\partial \B^*}-u_*\frac{\partial u_*}{\partial \nu}\langle \Phi,\nu\rangle^2-u_*u'\langle \Phi,\nu\rangle.\end{equation}
  \item \underline{Separation of variables and first simplifications}
  We identify $g$ with a function $g:[0;2\pi]\rightarrow \R$.
  \\We write the decomposition of $g$ as a Fourier series:
  $$g=\sum_{k\in \N}\alpha_k \cos(k\cdot)+\beta_k\sin(k\cdot).$$
  Since $g\in \mathcal X_0(\B^*)$ we have $\alpha_0=0$ and thus 
  \begin{equation}\label{Eq:Fourier}g=\sum_{k= 1}^\infty\alpha_k\cos(k\cdot)+\beta_k\sin(k\cdot).\end{equation}
  We define, for any $k\in \N^*$, $u_k'$ (resp. $w_k'$) as the shape derivative of $u$ with respect to  the perturbation $g=\cos(k\cdot)$ (resp. $g=\sin(k\cdot)$). Since $\B^*$ is a critical shape from Lemma \ref{Le:CriticalityBall}, $u_k'$ satisfies
  \begin{equation*}
  \left\{\begin{array}{ll}
  -\Delta u_k'=\lambda_* u_k'+V^* u_k',\\
  \left[u_k'\right]=-u_*\cos(k\cdot)\text{ on }\partial \B^*\,,\\
  u_k'=0\text{ on }\partial \O.\end{array}\right.
  \end{equation*}
  Since $u_*$ is constant on partial $\B^*$, we can write, in polar coordinates
  $$u_k'(r,\theta)=\psi_k(r)\cos(k\theta)$$ where $\psi_k$ satisfies the following equation (and we identify $V^*$ with the one dimensional function $\tilde V^*$ such that $(V^*)(x)=\tilde V^*(|x|)=\chi_{|x|\leq r^*}$):
  \begin{equation}\label{Eq:PsiK}
  \left\{
  \begin{array}{ll}
  -\frac1r(r\psi_k')'=\left(\lambda_*+V^*-\frac{k^2}{r^2}\right)\psi_k
  \\\left[\psi_k'\right](r^*)=-u_*(r^*)
  \\\psi_k(R)=0.
  \end{array}
  \right.
  \end{equation}
  In the same way, we have 
  $$w_k'(r,\theta)=\psi_k(r)\sin(k\theta).$$
  Whenever $g$ admits the Fourier decomposition \eqref{Eq:Fourier}, the linearity (with respect to $g$) of the equation on $u'_g$ gives
  $$u_g'=\sum_{k=1}^\infty \alpha_k u_k'+\beta_k w_k'.$$ Plugging this in  the expression of $L_\tau''$, see Equation \eqref{Eq:SecondDerivativeBall}, and using the orthogonality properties of $\left\{\cos(k\cdot),\sin(k\dot)\right\}_{k\geq 1}$ finally yields
  $$L_\tau''(\B^*)[g,g]=L_\tau''(\B^*)[\Phi,\Phi]=\sum_{k=1}^\infty\left\{\alpha_k^2+\beta_k^2\right\}u_*|_{\partial \B^*}\left(-\left.\frac{\partial u_*}{\partial \nu}\right|_{\partial \B^*}-\psi_k(r^*)\right).$$ 
  We define the relevant sequence $\{\omega_k\}_{k\in \N^*}$:
  \begin{equation}\label{Eq:OmegaK}
  \forall k\in \N^*\,,\omega_k:=-\left.\frac{\partial u_*}{\partial \nu}\right|_{\partial \B^*}-\psi_k(r^*),\end{equation} so that   \begin{equation}\label{SecondDerivativeFourier}L_\tau''(\B^*)[g,g]=u_*|_{\partial \B^*}\sum_{k=1}^\infty\omega_k \left\{\alpha_k^2+\beta_k^2\right\}.\end{equation}
  Our goal is now the following Lemma:
  \begin{lemma}\label{Le:OmegaPositivity}
  There exists $C>0$ such that 
  $$\forall k\in \N^*\,, \omega_k\geq C>0.$$\end{lemma}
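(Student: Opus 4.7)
The plan is to combine a comparison argument yielding $\omega_k > 0$ for each $k \geq 1$ with a monotonicity-in-$k$ argument providing the uniform lower bound $\omega_k \geq \omega_1$. A preliminary observation is that, for every $k \geq 1$, the operator $\mathcal{L}_k := -\frac{1}{r}(r\, \cdot\, ')' + \bigl(\frac{k^2}{r^2} - V^* - \lambda_*\bigr)$ is coercive on the weighted Sobolev space of radial functions regular at $0$ and vanishing at $R$. Indeed, its first eigenvalue is the difference between the first Dirichlet eigenvalue of $-\Delta - V^*$ restricted to the angular mode $\cos(k\theta)$ and $\lambda_*$, and the former is strictly larger since $u_*$ is radial (angular mode $0$) and $k^2/r^2 > 0$. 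Consequently $\mathcal{L}_k$ satisfies the weak and strong maximum principles.

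To obtain $\omega_k > 0$, I introduce the auxiliary function $W_k := -u_*' - \psi_k$. Differentiating the radial ODE satisfied by $u_*$ (using that $V^*$ jumps by $-1$ at $r = r^*$) shows that, away from $r^*$, $\mathcal{L}_k(-u_*') = -\frac{k^2 - 1}{r^2} u_*'$, while at $r^*$ the jump $[u_*''](r^*) = u_*(r^*)$ implies $[(-u_*')'](r^*) = -u_*(r^*) = [\psi_k'](r^*)$. Hence the derivative jumps cancel and $W_k$ is continuously differentiable across $r^*$ with $\mathcal{L}_k W_k = -\frac{k^2 - 1}{r^2} u_*' \geq 0$ on $(0, R)$, since $u_*' \leq 0$ on $[0, R]$ by Lemma \ref{Le:Schwarz}. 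The boundary values $W_k(0^+) = 0$ (both $u_*'$ and $\psi_k$ vanish at the origin) and $W_k(R) = -u_*'(R) > 0$ (Hopf's lemma applied to $u_*$ at $\partial \O$), together with the strong maximum principle, yield $W_k > 0$ on $(0, R]$, and in particular $\omega_k = W_k(r^*) > 0$.

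For the uniform lower bound in $k$, I rely on a variational representation of $\psi_k(r^*)$. An integration by parts, keeping track of the derivative jump at $r^*$, shows that $\psi_k$ is characterized by
\begin{equation*}
a_k(\psi_k, v) = r^* u_*(r^*)\, v(r^*) \quad \text{for every admissible test function } v,
\end{equation*}
where $a_k(v, w) := \int_0^R r\bigl[v'w' + \bigl(\frac{k^2}{r^2} - V^* - \lambda_*\bigr) vw\bigr] dr$ is the bilinear form associated with $\mathcal{L}_k$. Coercivity of $\mathcal{L}_k$ then yields the Dirichlet principle
\begin{equation*}
r^* u_*(r^*)\, \psi_k(r^*) = \max_v \bigl\{ 2 r^* u_*(r^*)\, v(r^*) - a_k(v, v) \bigr\}.
\end{equation*}
Since $a_{k+1}(v, v) - a_k(v, v) = (2k+1) \int_0^R \frac{v^2}{r}\, dr \geq 0$, the right-hand side is non-increasing in $k$, so $\psi_k(r^*)$ is non-increasing and $\omega_k = -u_*'(r^*) - \psi_k(r^*)$ is non-decreasing in $k$. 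Setting $C := \omega_1 > 0$ then gives $\omega_k \geq C$ for every $k \geq 1$.

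The main technical obstacle is the careful bookkeeping of the jump conditions at $r^*$: both $u_*''$ and $\psi_k'$ carry non-trivial jumps whose precise values must coincide in order for $W_k$ to satisfy the clean elliptic inequality that makes the maximum principle applicable. Beyond this, the argument reduces to coercivity of $\mathcal{L}_k$ in the angular modes $k \geq 1$ and the sign of $u_*'$ coming from the radial monotonicity of the ground state established in Lemma \ref{Le:Schwarz}.
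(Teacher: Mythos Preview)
Your proof is correct. Both you and the paper conclude with $\omega_k \geq \omega_1 > 0$, and your treatment of the positivity via the auxiliary function $W_k = -u_*' - \psi_k$ is, for $k=1$, exactly the paper's argument in disguise: the paper works with $\Psi = u_*' + \psi_1 = -W_1$, divides explicitly by $u_*$ to reduce to an operator with non-negative zero-order term, and then applies the standard strong maximum principle, where you invoke coercivity of $\mathcal{L}_k$ on the $k$-th angular mode more abstractly to the same effect. Your computation of the jump $[(-u_*')'](r^*)=-u_*(r^*)=[\psi_k'](r^*)$ and of the right-hand side $\mathcal{L}_k(-u_*')=\tfrac{k^2-1}{r^2}(-u_*')\geq 0$ is correct.

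The genuine methodological difference is in the monotonicity step. The paper proves the pointwise inequality $\psi_k \leq \psi_1$ on $(0,R)$ by applying a comparison principle to $z_k = \psi_k - \psi_1$ (after first establishing $\psi_1 > 0$, an intermediate lemma you avoid entirely), again dividing by $u_*$ to obtain the right sign structure. You instead obtain $\psi_k(r^*) \leq \psi_1(r^*)$ directly from the variational identity $r^* u_*(r^*)\,\psi_k(r^*) = \max_v\{2r^* u_*(r^*)\, v(r^*) - a_k(v,v)\}$ combined with the monotonicity $a_k \leq a_{k+1}$ of the bilinear forms. Your variational route is shorter, does not require the preliminary positivity of $\psi_1$, and yields exactly the inequality at $r=r^*$ that is needed; the paper's comparison argument gives the stronger global inequality $\psi_k \leq \psi_1$ on all of $(0,R)$, but at the price of that extra lemma. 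Note also that your $W_k$ argument already gives $\omega_k>0$ for every $k$, so strictly speaking your monotonicity step is only needed to extract the \emph{uniform} lower bound $C=\omega_1$.
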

  In order to prove this  Lemma, we use a comparison principle for one-dimensional differential equations.

  \item \underline{Proof of Lemma \ref{Le:OmegaPositivity}: monotonicity principle}
  We will first prove that 
  \begin{equation}\label{Eq:CompOmegaK}
  \forall k\in \N^*\,, \omega_k\geq \omega_1.\end{equation}
  We first note that
  \begin{equation}\label{Eq:PositivitePsi1}
  \psi_1>0\text{ in }(0;R).
  \end{equation}
  Before we prove, let us see  how \eqref{Eq:PositivitePsi1} implies \eqref{Eq:CompOmegaK}. Let, for any $k\geq 2$, $z_k$ be the function defined as 
  $$z_k:=\psi_k-\psi_1.$$
  Since $\psi_k$ can be expressed as $\psi_k=A_kJ_k(\frac{r}{R})$ for $r\leq r^*$ where $J_k$ is the $k$-th Bessel function of first kind, we have $z_k(0)=z_k(R)=0$. Furthermore, $z_k$ satisfies
  \begin{align}
  -\frac1r(rz_k')'&=\left(\lambda_*+V^*-\frac{k^2}{r^2}\right)\psi_k-\left(\lambda_*+V^*-\frac1{r^2}\right)\psi_1&\nonumber
  \\&\leq \left(\lambda_*+V^*-\frac{k^2}{r^2}\right)\psi_k-\left(\lambda_*+V^*-\frac{k^2}{r^2}\right)\psi_1&\text{ because $\psi_1>0$ by \eqref{Eq:PositivitePsi1}}\nonumber
  \\\label{Eq:ZkIneq}&\leq \left(\lambda_*+V^*-\frac{k^2}{r^2}\right)z_k.&
  \end{align} We also have 
  $$[z_k'](r^*)=0.$$
  However, this equation and this no-jump condition imply
  \begin{equation}\label{Eq:ZkNegative}z_k\leq 0.\end{equation}
  For $k$ large enough, this simply follows by a contradiction argument: if $\lambda_*+V^*-\frac{k^2}{r^2}<0$ in $(0;R)$ then, if $z_k$ reached a positive maximum at some interior point $\overline r$, we should have 
  $$0\leq -\frac1{\overline r}(\overline r z_k''(\overline r))\leq  \left(\lambda_*+V^*-\frac{k^2}{\overline r^2}\right)z_k(\overline r)<0,$$ yielding a contradiction.
  \\A proof of \eqref{Eq:ZkNegative} that is valid for all values of $k$ reads as follows: identifying $u_*$ with its one-dimensional counterpart (i.e with the function $\tilde u_*:[0;R]\rightarrow \R$ such that $u_*(x)=\tilde u_*(|x|)$) we define 
  $$p_k:=\frac{z_k}{u_*}.$$
  We notice that $p_k(0)=0$ and that 
  $$[p_k'](r^*)=0.$$ Furthermore, by straightforward computation,  $p_k$ satisfies
  $$-\frac1r(rp_k')'=\frac{-1}{u_*}(rz_k')'+\frac{z_k}{u_*^2}\frac1r(ru_*')'+2\frac{u_*'}{u_*}p_k'.$$ By \eqref{Eq:ZkIneq} and by non-negativity of $u_*$ we get
  $$-\frac1r(rp_k')'\leq -\frac{k^2}{r^2} p_k.$$
  From this it is straightforward to see by a contradiction argument that $p_k$ can not reach a positive maximum at an interior point. It remains to exclude the case $p_k(R)>0$. 
  \\We argue, once again, by contradiction, and assume that $p_k(R)>0$. Since by l'Hospital rules we have 
  $$p_k(r)\underset{r\to R}\sim \frac{z_k'(r)}{u_*'(r)}$$ we must have 
  $z_k'(r)\leq 0.$ However once again by l'Hospital's rule,
  $$p_k'(r)=\frac{u_*z_k'}{u_*^2}-\frac{u_*'z_k}{u_*^2}\underset{r\to 0}\sim \frac12{z_k'}{2u*}<0.$$
  Hence $p_k$ is locally decreasing at $R$, yielding a contradiction. Thus $p_k\leq 0$ and in turn $\psi_k-\psi_1=z_k\leq 0$, completing the proof of \eqref{Eq:CompOmegaK}.
\\The proof of \eqref{Eq:PositivitePsi1} follows from the same arguments: we define 
$$\Psi_1:=\frac{\psi_1}{u_*}$$ and observe that 
$$-\frac1r(r\Psi_1')'=-\frac1{r^2}\Psi_1+2\frac{u_*'}{u_*}\Psi_1'\,, [\Psi_1'](r^*)=-u_*(r^*).$$
We once again argue by contradiction and assume that $\Psi_1$ reaches a negative minimum. From the jump condition at $r^*$, if this maximum is reached at an interior point, it cannot be at $r=r^*$ and the contradiction follows from the Equation. We exclude the case of a negative minimum at $R$ through the same reasons as for $p_k$.
\\It follows that $\psi_k\leq \psi_1$ so that
$$\omega_k-\omega_1=\psi_1(r^*)-\psi_k(r^*)\geq 0.$$
To conclude the proof of Lemma \ref{Le:OmegaPositivity}, it remains to prove that 
\begin{equation}\label{Eq:Omega1Positive}
\omega_1>0.\end{equation}
\begin{proof}[Proof of \eqref{Eq:Omega1Positive}]
We define
$$\Psi:=u_*'+\psi_1.$$
We note that 
$$-\frac1r(r(u_*')')'=\left(\lambda_*+V^*-\frac1{r^2}\right)u_*'\,,\quad  [(u_*')'](r^*)=[u''](r^*)=u_*(r^*).$$
By Hopf's Lemma, $u_*'(R)<0$ and, since $u_*$ is $\mathscr C^2$ in $\B^*$, $u_*'(0)=0.$
\\We get the following equation on $\Psi$:
$$-\frac1r(r\Psi')'=\left(\lambda_*+V^*-\frac1{r^2}\right)\Psi\,,\quad[\Psi'](r^*)=0\,,\quad \Psi(0)=0\,,\quad \Psi(R)<0.$$
Defining
$$\Theta:=\frac{\Psi}{u_*}$$ we get 
$$-\frac1r(r\Theta')'=-\frac1{r^2}\Theta+2\frac{u_*'}{u_*}\Theta'$$ and $\Theta$ can thus not reach a positive maximum at an interior point. Since it is negative at $r=R$ we get $\Theta\leq 0$ in $[0;R]$. Furthermore, it is not identically zero since $\Theta(R)\neq 0$, and the strong maximum principle implies $\Theta<0$ in $(0;R)$. This gives
$$\Theta(r^*)<0$$ or, equivalently
$$\omega_1=-u_*(r^*)\Theta(r^*)>0$$ and this concludes the proof of Lemma \ref{Le:OmegaPositivity}.
\end{proof}
\item \underline{Conclusion of the proof}
To prove Proposition \ref{Le:SecondDerivativeBall}, we simply write 
\begin{align*}
L_\tau''(\B^*)[g,g]&=\sum_{k=1}^\infty \omega_k \left\{\alpha_k^2+\beta_k^2\right\}&\text{ by \eqref{SecondDerivativeFourier}}
\\&\geq C\sum_{k=1}^\infty  \left\{\alpha_k^2+\beta_k^2\right\}& \text{ by Lemma \ref{Le:OmegaPositivity}}
\\&=C||g||_{L^2}^2
\\&=C||\langle \Phi,\nu\rangle||_{L^2}^2.
\end{align*}
\end{enumerate}
The proof of the Proposition is now complete.

\end{proof}

\subsubsection{Taylor-Lagrange formula and control of the remainder}
We now state the main estimate which will enable us to apply the Taylor-Lagrange formula.
\begin{proposition}\label{Pr:ControlRemainder}
Let $M>0$ and $\eta>0$. There exists $s\in (0;1)\, ,\e=\e(\eta)>0$ such that for any $\Phi \in \mathcal X_1(\B^*)$ satisfying 
$$||\Phi||_{\mathscr C^{1,1}}\leq M\,,||\Phi||_{\mathscr C^{1,s}}\leq \e$$ there holds
\begin{equation}\left|L_\tau''(\B_{\Phi})[\Phi,\Phi]-L_\tau''(\B^*)[\Phi,\Phi]\right|\leq \eta ||\langle \Phi,\nu\rangle ||_{L^2}^2.\end{equation}
\end{proposition}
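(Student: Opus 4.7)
The plan is to decompose the difference $L_\tau''(\B_\Phi)[\Phi,\Phi]-L_\tau''(\B^*)[\Phi,\Phi]$ through the explicit boundary-integral expression of Lemma \ref{Le:SecondDerivative} (and the Hadamard formula for $Vol''$) and to estimate each of the resulting summands by $o_\Phi(1)\cdot \|\langle \Phi,\nu\rangle\|_{L^2(\partial \B^*)}^2$. The exponent $s\in(0;1)$ will be fixed once and for all, small enough to allow the Schauder estimates below, while $\e=\e(\eta)$ is the smallness parameter chosen at the very end.

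First, I would transport every integral onto the reference boundary $\partial\B^*$ via the diffeomorphism $\partial\B^*\ni x\mapsto x+\Phi(x)\in\partial\B_\Phi$. The induced Jacobian, pulled-back normal vector and geodesic distortion all differ from their reference values by $O(\|\Phi\|_{\mathscr{C}^{1,s}})$, so any purely geometric factor converges uniformly to its limiting value on $\B^*$. The uniform $\mathscr{C}^{1,1}$ bound on $\Phi$ is used here in order to keep the mean curvature $H_{\B_\Phi}$ in $L^\infty$ with a constant depending only on $M$; combined with $\mathscr{C}^{1,s}$-smallness this gives $H_{\B_\Phi}\to 1/r^*$ uniformly on $\partial\B^*$.

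Next, I would establish the uniform convergence up to the boundary of the state $u_{\B_\Phi}$, of its normal derivative $\partial_\nu u_{\B_\Phi}$, of the jump $[\partial_\nu^2 u_{\B_\Phi}]$ and of the shape derivative $u'_{\B_\Phi}[\Phi]$ towards their reference values $u_*$, $\partial_\nu u_*$, $[\partial_\nu^2 u_*]$ and $u'_{\B^*}[\Phi]$. This relies on Schauder-type estimates for the two-phase elliptic transmission problem with a $\mathscr{C}^{1,s}$ interface satisfied by $u_{\B_\Phi}$: one obtains $\mathscr{C}^{1,s'}$ control (for some $0<s'<s$) on each side of $\partial\B_\Phi$, with constants uniform in $\Phi$ thanks to the $\mathscr{C}^{1,1}$ bound. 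The shape derivative $u'_{\B_\Phi}[\Phi]$ solves a transmission problem of the same structure (see Equation \eqref{Eq:Derivative1General}), so the same scheme applies, and the boundary traces pass to the limit by compactness.

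Finally, I would bound each summand of the explicit formula. Each of them reads, after transport onto $\partial\B^*$, $\int_{\partial\B^*}\mathfrak f(\Phi)\,\mathfrak g(\Phi)\,d\sigma$ with $\mathfrak g(\Phi)\in\{\langle\Phi,\nu\rangle,\langle\Phi,\nu\rangle^2\}$. For the quadratic weightings, the Hölder inequality together with the $L^\infty$ convergence $\mathfrak f(\Phi)\to \mathfrak f(0)$ from Step~2--3 gives the bound $\|\mathfrak f(\Phi)-\mathfrak f(0)\|_{L^\infty}\cdot \|\langle\Phi,\nu\rangle\|_{L^2}^2$, which is $\leq\eta\|\langle\Phi,\nu\rangle\|_{L^2}^2$ once $\e$ is chosen small enough. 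For the only linear contribution, coming from $-2\int_{\partial E}uu'\langle\Phi,\nu\rangle$, I would decompose $u'_{\B_\Phi}[\Phi]$ in the Fourier basis exactly as in Step~3 of the proof of Proposition \ref{Le:SecondDerivativeBall}: the integral then becomes a spectral sum $\sum_{k\geq 1}\omega_k^{(\Phi)}(\alpha_k^2+\beta_k^2)$, with multipliers $\omega_k^{(\Phi)}$ depending continuously on $\Phi$ and tending to the reference multipliers $\omega_k$ of \eqref{Eq:OmegaK} uniformly in $k$ on any ball of $\mathscr{C}^{1,s}$.

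The principal obstacle lies in pushing the elliptic/Schauder convergence all the way to the boundary traces of $u'_{\B_\Phi}[\Phi]$ and of $[\partial_\nu^2 u_{\B_\Phi}]$: $\mathscr{C}^{1,s}$ smallness alone is insufficient, because the transmission problem for $u'$ has a boundary source $u_*$ on an interface of merely Hölder regularity. It is precisely here that the uniform $\mathscr{C}^{1,1}$ bound $M$ is essential, since it freezes the constants in every Schauder estimate uniformly in $\Phi$ and prevents the mean curvature term $-Hu^2$ in Lemma \ref{Le:SecondDerivative} from degenerating during the perturbation.
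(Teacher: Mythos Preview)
Your overall architecture (pull everything back onto $\partial\B^*$, control the geometric distortions by $\|\Phi\|_{\mathscr C^{1,s}}$, and treat the state convergence by elliptic regularity) matches the paper. The quadratic-in-$\langle\Phi,\nu\rangle$ terms are indeed handled the way you describe: $\mathscr C^1$ convergence $\hat u_\Phi\to u_*$ plus the Jacobian estimates give the required $L^\infty$ smallness of the coefficients.

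The gap is in the linear term $-2\int_{\partial E}uu'\langle\Phi,\nu\rangle$. Two issues. First, your Fourier diagonalisation does not survive the perturbation: at $\B^*$ the radial symmetry forces $u'_k(r,\theta)=\psi_k(r)\cos(k\theta)$, which is why the bilinear form collapses to $\sum_k\omega_k(\alpha_k^2+\beta_k^2)$. At $\B_\Phi$ the transported shape derivative $\hat u'_\Phi$ solves a problem with \emph{variable} coefficients $A_\Phi$, $J_\Omega(\Phi)$ and there is no separation of variables; the quadratic form is full, not diagonal, so the object ``$\omega_k^{(\Phi)}$'' is not well defined as a scalar sequence. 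Second, even granting some operator-norm formulation, what you ultimately need is the \emph{quantitative} estimate
\[
\bigl\|\hat u_\Phi'-u_0'\bigr\|_{L^2(\partial\B^*)}\ \le\ \eta\,\|\langle\Phi,\nu\rangle\|_{L^2(\partial\B^*)},
\]
i.e.\ smallness of the difference \emph{relative to} $\|\langle\Phi,\nu\rangle\|_{L^2}$, not mere convergence. Schauder estimates for $u'$ give bounds in terms of H\"older norms of the jump datum $g=\langle\Phi,\nu\rangle$, not its $L^2$ norm, so they do not by themselves yield the correct scaling.

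The paper closes this gap by working in $W^{1,2}_0(\Omega)$ rather than in H\"older spaces for $u'$, proving
\[
\|\hat u_\Phi'-u_0'\|_{W^{1,2}_0(\Omega)}\le \eta\,\|\langle\Phi,\nu\rangle\|_{L^2(\partial\B^*)}
\]
and then invoking the continuity of the trace $W^{1,2}_0(\Omega)\to L^2(\partial\B^*)$. The proof of this estimate is the substantive part: one first shows a uniform spectral gap $\lambda_2(V)-\lambda(V)\ge\omega>0$ over $\mathcal M(\Omega)$, which yields coercivity on $\langle u_V\rangle^\perp$; then a bootstrap (first $L^2$, then gradient) gives $\|\hat u_\Phi'\|_{W^{1,2}_0}\le M\|\langle\Phi,\nu\rangle\|_{L^2}$; finally a splitting $u_\Phi'=-\pi_\Phi H_\Phi+\xi_\Phi$ in the style of \cite{DambrineLamboley}, with $H_\Phi$ solving an auxiliary transmission problem carrying the jump and $\xi_\Phi$ a smooth Fredholm remainder, lets one subtract the reference $u_0'$ and run the bootstrap again on the difference. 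Your proposal would be repaired by replacing the Fourier argument with this $W^{1,2}$ continuity estimate for the shape derivative.
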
 
As mentioned earlier, this will prove a quantitative inequality for sets of bounded curvature that are in a $\mathscr C^{1,s}$ neighbourhood of $\B^*$. Note that working in the $\mathscr C^{1,s}$ norm rather than in a $\mathscr C^{2,s}$ norm will be enough, since elliptic regularity estimates will prove sufficient for our proofs.
\\The proof of this proposition is technical but not unexpected in this context. We postpone the proof to Appendix \ref{An:Prop}.

\subsubsection{Conclusion of the proof of Step 4}\label{Susu:ConclStep4}
Recall that we have defined
$$f(t):=\lambda(\B_{t\Phi}).$$
Then:
\begin{align*}
\lambda(\B_\Phi)-\lambda(\B^*)&=f(1)-f(0)
\\&=f'(0)&\left(=0\text{ because $\B^*$ is critical }\right)
\\&+\int_0^1 (1-t) f''(t)dt
\\&=\frac12f''(0)+\int_0^1(1-t) \left(f''(t)-f''(0)\right)dt
\\&\geq C||\langle \Phi,\nu\rangle||_{L^2}^2-\eta||\langle \Phi,\nu\rangle||_{L^2}^2
\\&\geq\frac{C}2||\langle \Phi,\nu\rangle||_{L^2}^2
\\&\geq C'||\langle \Phi,\nu\rangle||_{L^1}^2&\text{ by the Cauchy-Schwarz Inequality}
\\&=C'\delta^2 &\text{ because $|\B_\Phi\Delta \B^*|=\delta$}.
\end{align*}
whenever $||\n \Phi||_{L^\infty}$ is small enough. This concludes the proof of Step 4.
\subsubsection{A remark on the coercivity norm}
The $L^2$ coercivity established in Proposition \ref{Le:SecondDerivativeBall} is not only sufficient, but also optimal. Indeed, since $\psi_k$ is non-negative in $(0;R)$, we immediately have the bound
$$0\leq \omega_k\leq -\frac{\partial u_*}{\partial \nu}.$$ In other words, the coercivity norm for the second derivative is the $L^2$ (rather than the $H^{\frac12}$) norm of the perturbation. This is due to the fact that here, in the context of parametric optimization, it is the value of the shape derivative $u'$ rather than the value of its normal derivative that is involved in the second order shape derivative. We not that this is in sharp contrast with classical shape optimization, where the optimization is carried out with respect to the whole domain $\O$, and where the coercivity norm is the $H^{\frac12}$ norm, see \cite{DambrineLamboley} and the references therein.

\subsection{Step 5: Conclusion of the proof of Theorem \ref{Th:Quanti}}
We now conclude the proof of Theorem \ref{Th:Quanti}. \color{black} We will argue  by contradiction, but let us first fix some notations.
For any $\delta>0$, let  $\mathcal V_\delta$ be a solution of the variational problem \eqref{Eq:PvDelta}. In the same way we derived the optimality conditions for the radial version of the optimization problem, that is, for the variational problem \eqref{Eq:PvDeltaRadial}, it is easy to see that $\mathcal V_\delta$ is equal to 0 or $1$ almost everywhere and that, furthermore, if $u_\delta$ is the associated eigenfunction, that  there exists two real numbers $\mu_\delta$ and $\eta_\delta$ such that 
$$\left\{\mathcal V_\delta =1\right\}=\Big(\left\{u_\delta\geq \mu_\delta \Big\}\cap \B^*\right)\cup \Big(\left\{u_\delta\geq \eta_\delta\right\}\cap (\B^*)^c\Big).$$ We refer to Figure \color{Plum}4 \color{black} below.

\begin{remark}
We actually expect that 
$$\mathcal V_\delta=\mathcal H_\delta$$ where $\mathcal H_\delta$ was defined in Step 3, at least for $\delta>0$ small enough, in which case Step 4 would prove irrelevant. Put otherwise, we expect the solution to \eqref{Eq:PvDelta} to be a radially symmetric set, given the symmetries properties involved. We were not able to prove this result but we give several numerical simulations in the one-dimensional case, i.e with $\O=(-1;1)$, that back this conjecture up. We plot, for several values of $\delta>0$ and $V_0=0.6$, both $\mathcal V_\delta$ and $V^*-\mathcal V_\delta$:

\begin{figure}[h!]
\begin{center}
\includegraphics[width=6cm]{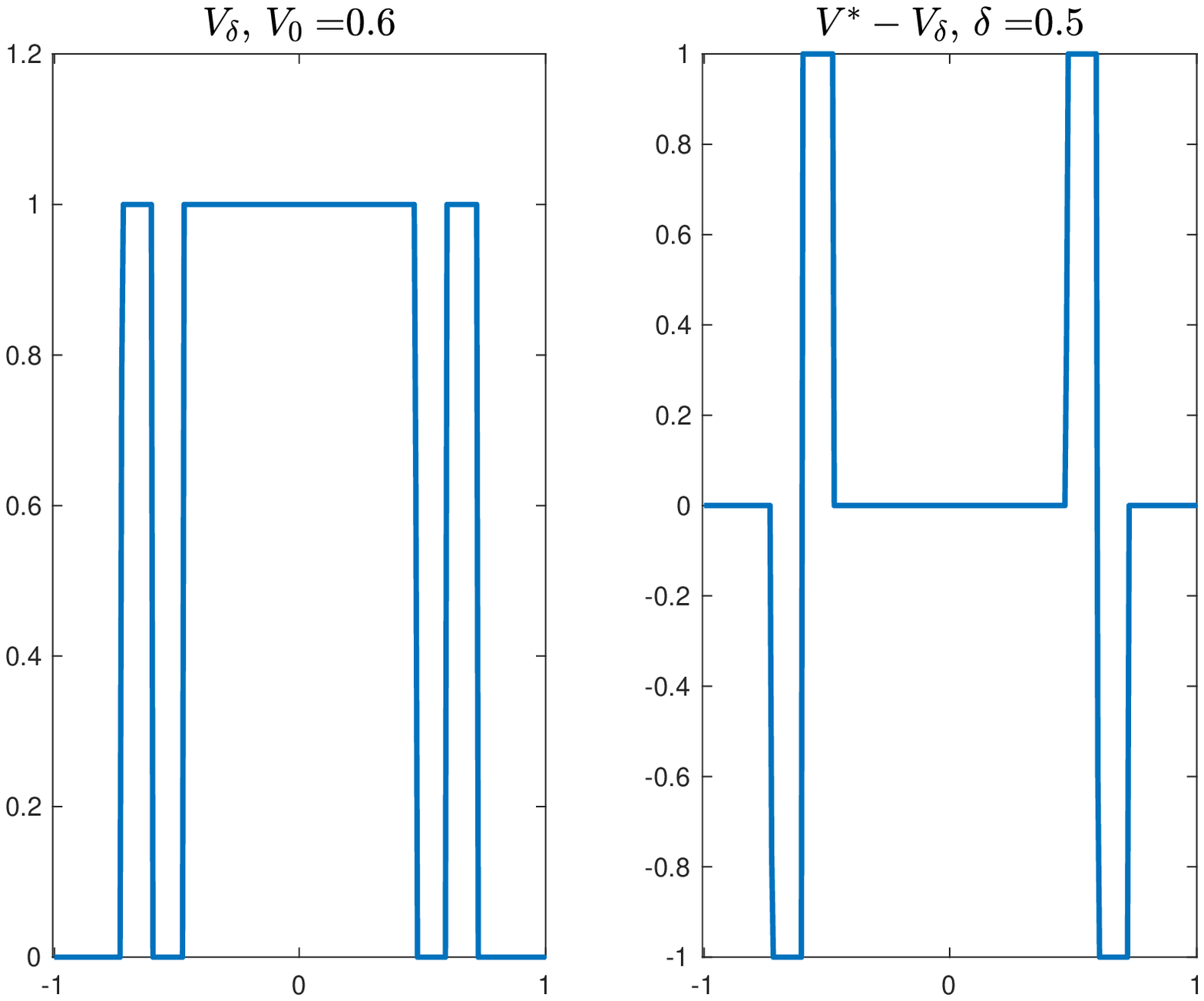}\hspace{1cm}
\includegraphics[width=6cm]{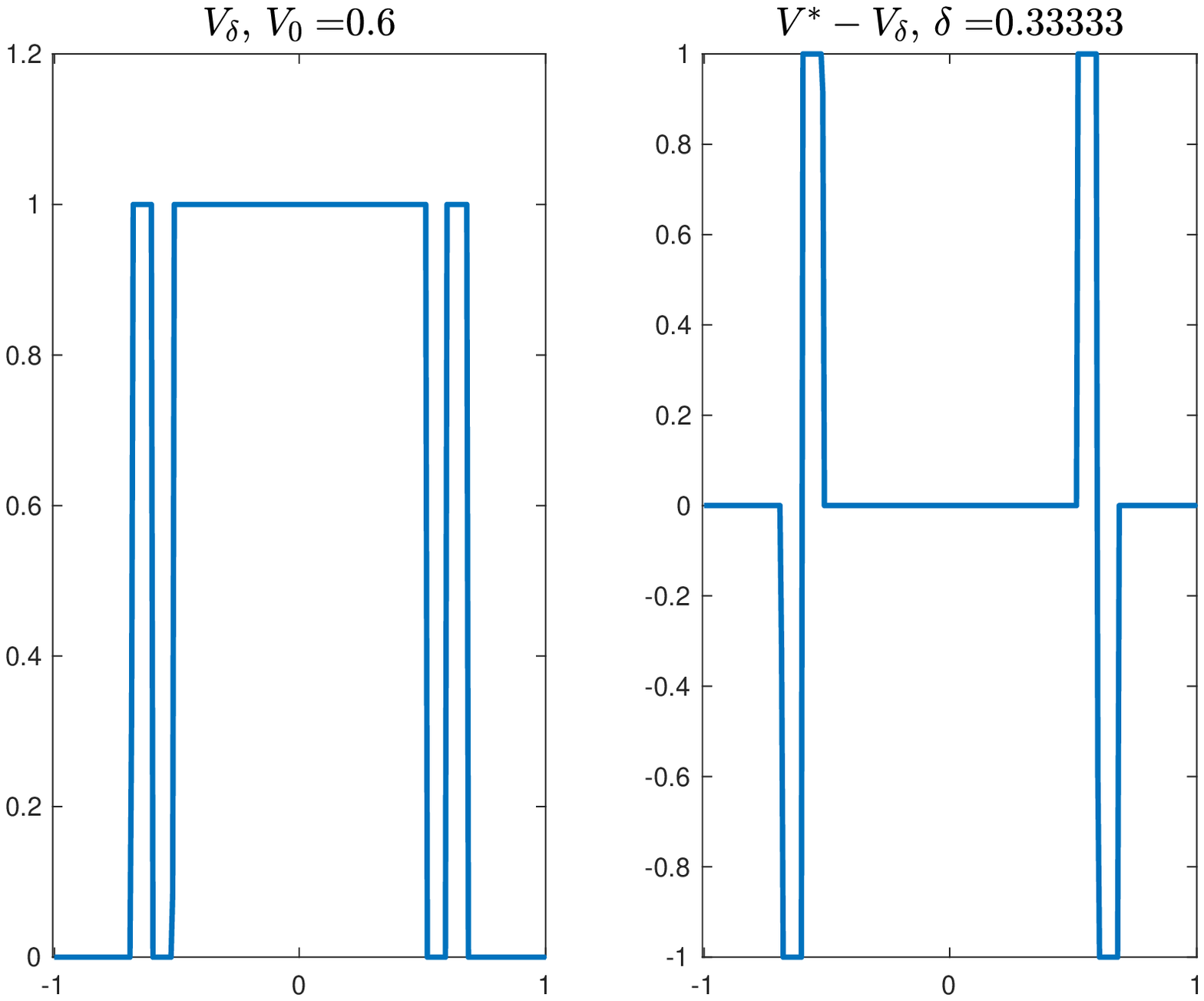}
\end{center}
\end{figure}
\begin{figure}[h!]
\begin{center}
\includegraphics[width=6cm]{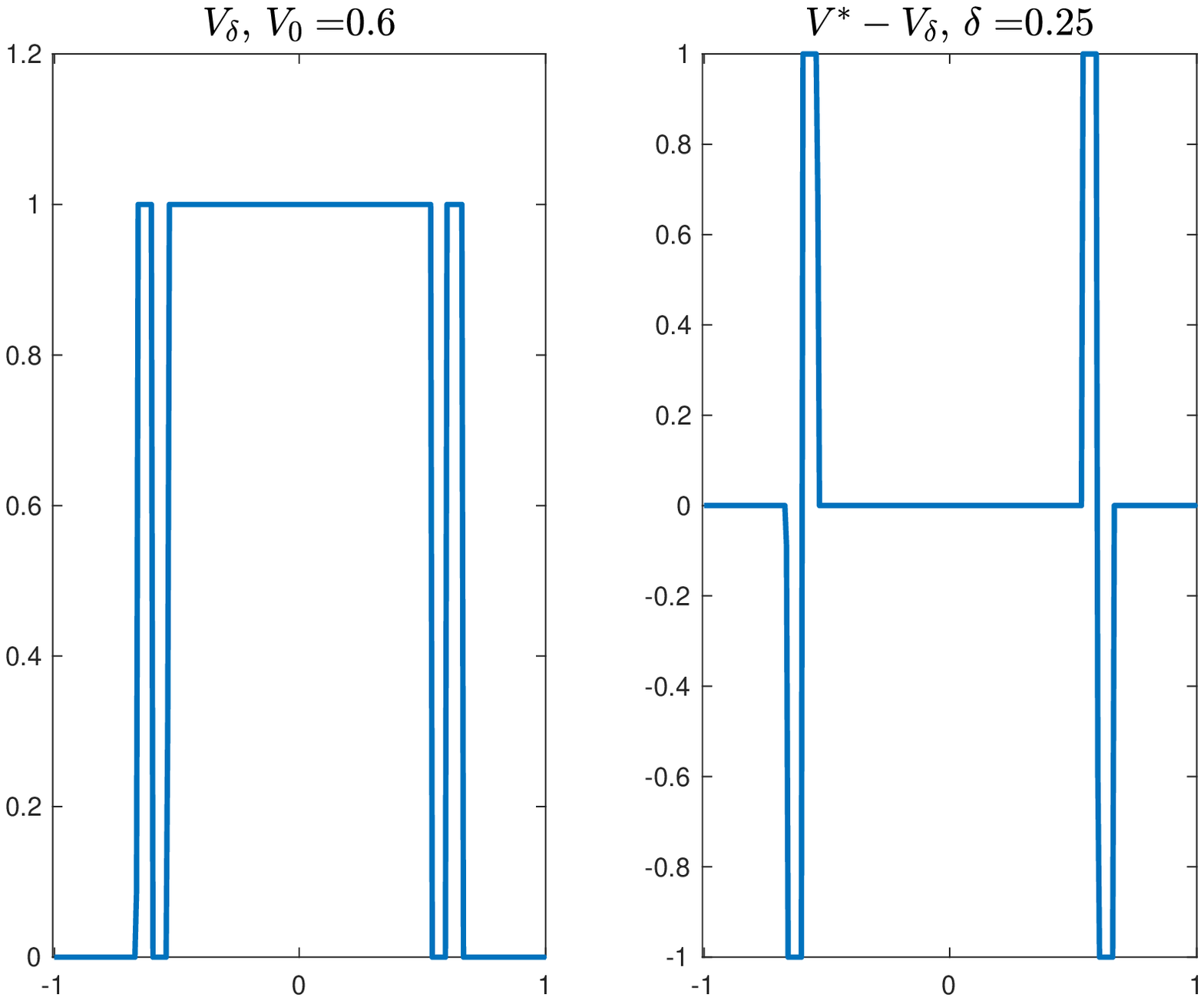}\hspace{1cm}
\includegraphics[width=6cm]{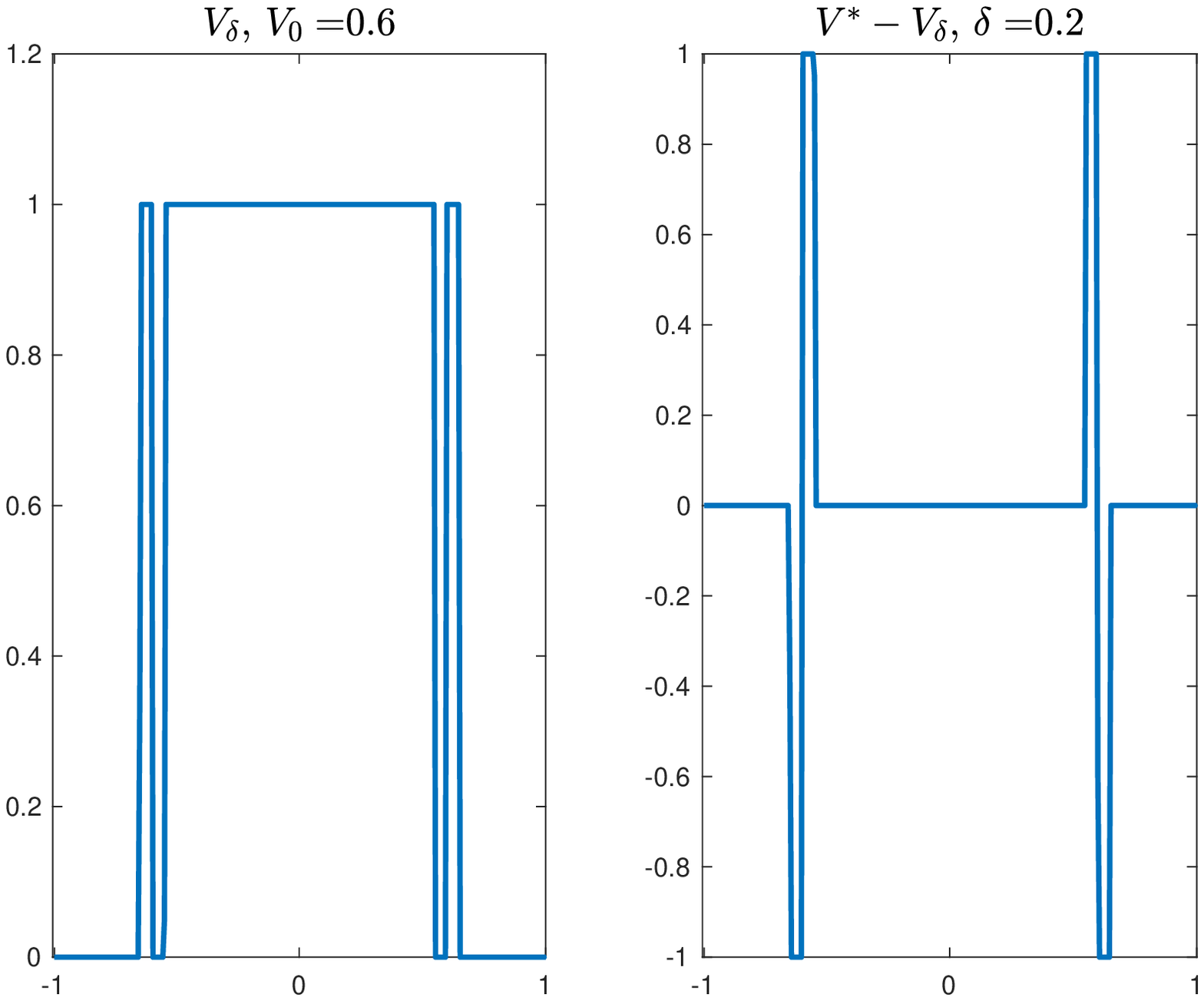}
\end{center}
\end{figure}

\end{remark}

We introduce one last parameter: let $\zeta_{\delta}$ be the unique real number such that
\begin{equation}\label{De:Zeta}\Big|\{ u_\delta>\zeta_\delta\}\Big|=V_0.\end{equation}
In the two figures below, we represent the two most extreme cases we might face (note that we always represent sets that are symmetric with respect to the $x$-axis; this is allowed by Steiner's rearrangement but this property will not be used in what follows)

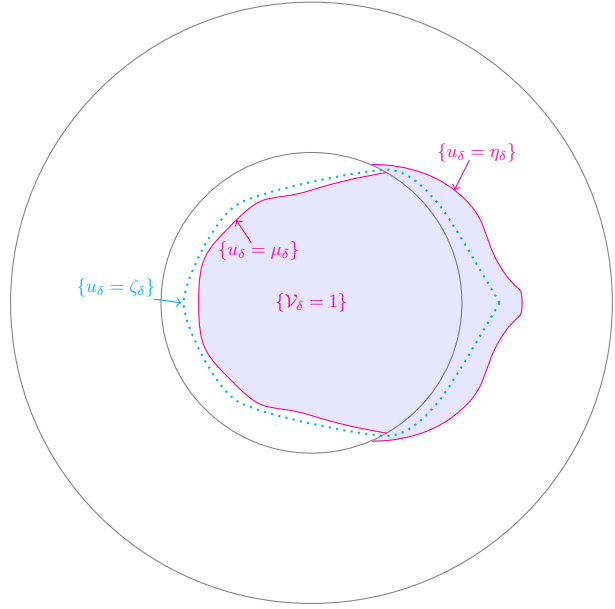
\begin{figure}[H]\label{FigZ}
\begin{center}

\begin{tikzpicture}
\draw [gray] (0,0) circle(4cm);
\draw[magenta,thin,fill=blue, fill opacity=0.1] plot [smooth, tension=1] coordinates{ (0.8,1.84)  (1.9,1.5) (2.5,0.5) (2.8,0) (2.5,-0.5) (1.9,-1.5)(0.8,-1.84)};

\draw[gray,fill=white]  (0,0) circle(2cm);
\draw[gray,fill=blue,opacity=0.1](2,0) arc (0:60:2cm);
\draw[gray,fill=blue,opacity=0.1](2,0) arc (0:-60:2cm);
\fill[blue,opacity=0.1] (1,1.732050)--(2,0)--(1,-1.732050)--(1,1.732050);


\draw[magenta,thin,fill=blue, fill opacity=0.1] plot [smooth, tension=1] coordinates{ (1,1.732050)  (0,1.5)(-1,1.1) (-1.5,0) (-1,-1.1) (0,-1.5)(1,-1.732050)};

\draw[dotted,cyan,thick] plot [smooth, tension=0.4] coordinates{ (2.5,0) (1.7,1.3) (1,1.77) (-1,1.3) (-1.7,0)(-1,-1.3)(1,-1.77)(1.7,-1.3)(2.5,0)};
\draw[->,cyan] (-2.1,0.05)--(-1.73,0);
\draw[cyan] (-2.6,0.2) node [scale=0.7]{ $\{u_\delta=\zeta_{\delta}\}$};
\draw[->,magenta] (-0.8,0.8)--(-1,1.1);
\draw[magenta](-0.7,0.7)node[scale=0.7] {$\{u_\delta=\mu_\delta\}$};
\draw[->,magenta] (2.1,1.9)--(1.9,1.5);
\draw[magenta](2.2,2)node[scale=0.7] {$\{u_\delta=\eta_\delta\}$};

\draw (0:0) node[magenta,scale=0.7] {$\{\mathcal V_\delta=1\}$};

\end{tikzpicture}
\caption{Here, the set $\mathcal V_\delta$ is connected, and we might compare it with a normal deformation of $\B^*$. }
\end{center} 

\end{figure}

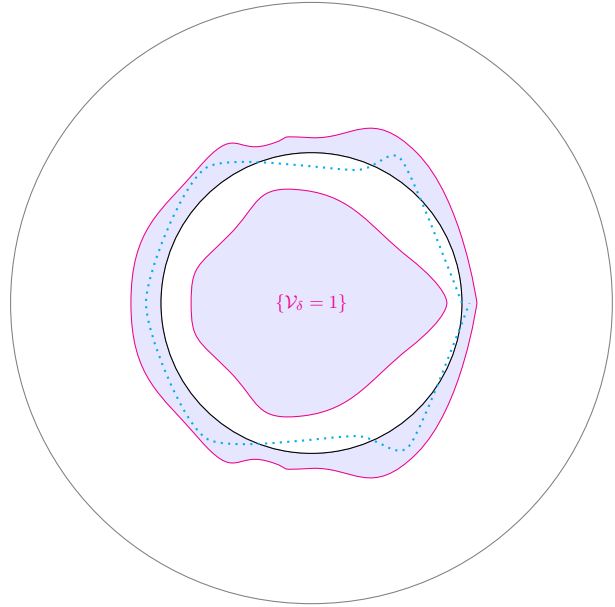
\begin{figure}[H]
\begin{center}
\begin{tikzpicture}
\draw [gray] (0,0) circle(4cm);
\draw[gray]  (0,0) circle(2cm);
\draw[magenta,thin,fill=blue, fill opacity=0.1] plot [smooth, tension=1] coordinates{ (2.2,0) (1.4,2) (0,2.2) (-0.6,2.1) (-1.5,1.8) (-2.4,0) (-1.5,-1.8) (-0.6,-2.1) (0,-2.2) (1.4,-2)(2.2,0)};
\draw[fill=white] circle (2cm);
\draw[magenta,thin,fill=blue, fill opacity=0.1] plot [smooth, tension=1] coordinates{ (1.8,0) (1.3,0.6)  (-0.1,1.5)(-1.1,0.9) (-1.6,0) (-1.1,-0.9) (-0.1,-1.5)(1.3,-0.6) (1.8,0)};
\draw[dotted,cyan,thick] plot [smooth, tension=0.4] coordinates{ (2,0) (1.2,1.9) (0.6,1.77) (-1.4,1.8) (-2.2,0)(-1.4,-1.8)(0.6,-1.77)(1.3,-1.9)(2.1,0)};

\draw (0:0) node[magenta,scale=0.7] {$\{\mathcal V_\delta=1\}$};

\end{tikzpicture}
\caption{Here, the set $\mathcal V_\delta$ is disconnected, and we  might compare it with a radial distribution.}
\end{center} 

\end{figure}

To formalize this, we introduce the quantity
$$f(\delta):=\left|\left\{u_\delta\geq \zeta_\delta\right\}\Delta\left(\{u_\delta\geq \eta_\delta\}\cap (\B^*)^c\right)\right|.$$
Since 
$$\left|\{u_\delta\geq \eta_\delta\}\cap (\B^*)^c\right|=\frac{\delta}2$$ because $\mathcal V_\delta \in \mathcal M_\delta$,  we have
$$f(\delta)\leq \frac{\delta}2.$$

Let us now turn back to the proof of Theorem \ref{Th:Quanti}.

 To prove Theorem \ref{Th:Quanti}, we will as mentioned argue by contradiction:  assume that the estimate \eqref{Eq:Rate} is not valid, that is, there exists a sequence $\{\delta_k\}_{k\in \N}$ such that $$\lim_{k\to \infty}\delta_k=0\text{ and, for any $k\in \N$, $\delta_k>0$}$$ and, furthermore, 
$$\lim_{k\to \infty}\frac{\lambda(\mathcal V_{\delta_k})-\lambda(V^*)}{\delta_k^2}=0.$$  Since we have, for every $k\in \N$, 
$$f(\delta_k)\leq \frac{\delta_k}2$$ we can also assume that, up to an extraction:
$$\text{ the sequence $\left\{ \frac{f(\delta_k)}{\delta_k}\right\}_{k\in \N}$ is converging}.$$ We  now establish a dichotomy depending on the limit of $\left\{ \frac{f(\delta_k)}{\delta_k}\right\}_{k\in \N}$, and  distinguish two cases:
\begin{enumerate}
\item \underline{First case: comparison with a radial distribution}

The first case is defined by 
$$\frac{f({\delta_k})}{\delta_k}\underset{k\to \infty }\rightarrow \ell >0.$$
In that case, $f({\delta_k})\underset{k\to \infty}\sim \ell {\delta_k}$.%
We now apply the bathub principle: let $E_{\delta_k}^1$ be the solution of 
$$\inf_{E\subset \left\{u_{\delta_k}>\zeta_{\delta_k}\right\}\,, |E|=V_0-f({\delta_k})}-\int_{\{u_{\delta_k}\geq \zeta_{\delta_k}\}\cap E} u_{\delta_k}^2.$$
If $\zeta_{{\delta_k},1}$ is defined through
$$\Big|\left\{u_{\delta_k}>\zeta_{{\delta_k},1}\right\}\Big|=V_0-f({\delta_k})$$ then $\zeta_{{\delta_k},1}>\zeta_{\delta_k}$ and consequently
$$E_{\delta_k}^1=\{u_{\delta_k}>\zeta_{{\delta_k},1}\}.$$
In the same way, we define $E_{\delta_k}^2$ as the solution of
$$\inf_{E\subset \Big(\left\{u_{\delta_k}>\zeta_{\delta_k}\right\}\Big)^c\,, |E|=f({\delta_k})}-\int_{\{u_{\delta_k}\geq \zeta_{\delta_k}\}^c\cap E} u_{\delta_k}^2$$ and, if we define $\zeta_{{\delta_k},2}$ through the equation
$$\Big|\left\{\zeta_{\delta_k}>u_{\delta_k}>\zeta_{{\delta_k},2}\right\}\Big|=f({\delta_k})$$ then
$$E_{\delta_k}^2=\left\{\zeta_{\delta_k}>u_{\delta_k}>\zeta_{{\delta_k},2}\right\}.$$
We replace $V_{\delta_k}$ by 
$$\mathcal W_{\delta_k}:=\chi_{E_{\delta_k}^1}+\chi_{E_{\delta_k}^2}.$$ From the bathutb principle,
$$-\int_\O \mathcal V_{\delta_k} u_{\delta_k}^2\geq -\int_\O \mathcal W_{\delta_k} u_{\delta_k}^2.$$ However, $\mathcal W_{\delta_k}$ might not satisfy
$$\int_\O |\mathcal W_{\delta_k}-\chi_{\B^*}|={\delta_k}.$$
We represent $E_{\delta_k}^i$, $i=1,2$, below:

\begin{figure}[H]
\begin{center}

\begin{tikzpicture}[scale=0.6]
\draw [gray] (0,0) circle(4cm);
\draw[magenta,thin,pattern=north west lines, pattern color=gray] plot [smooth, tension=1] coordinates{ (0.8,1.84)  (1.9,1.5) (2.5,0.5) (2.8,0) (2.5,-0.5) (1.9,-1.5)(0.8,-1.84)};
\draw[dotted,cyan,thick,fill=white] plot [smooth, tension=0.4] coordinates{ (2.5,0) (1.7,1.3) (1,1.77) (-1,1.3) (-1.7,0)(-1,-1.3)(1,-1.77)(1.7,-1.3)(2.5,0)};
\fill[blue,opacity=0.1] (2.5,0)--(1.7,1.3)--(1.3,1.7)--(1.15,1.74)--(1,1.75)--(1,-1.75)--(1.15,-1.74)--(1.3,-1.7)--(1.7,-1.3);
\draw[gray,fill=white]  (0,0) circle(2cm);
\draw[gray,fill=blue,opacity=0.1](2,0) arc (0:60:2cm);
\draw[gray,fill=blue,opacity=0.1](2,0) arc (0:-60:2cm);
\fill[blue,opacity=0.1] (1,1.732050)--(2,0)--(1,-1.732050)--(1,1.732050);


\draw[magenta,thin,fill=blue, fill opacity=0.1] plot [smooth, tension=1] coordinates{ (1,1.732050)  (0,1.5)(-1,1.1) (-1.5,0) (-1,-1.1) (0,-1.5)(1,-1.732050)};

\draw[dotted,cyan,thick] plot [smooth, tension=0.4] coordinates{ (2.5,0) (1.7,1.3) (1,1.77) (-1,1.3) (-1.7,0)(-1,-1.3)(1,-1.77)(1.7,-1.3)(2.5,0)};

\end{tikzpicture}
\hspace{3cm}
\begin{tikzpicture}[scale=0.6]
\draw [gray] (0,0) circle(4cm);
\draw[magenta,thin,pattern=north west lines, pattern color=gray] plot [smooth, tension=1] coordinates{ (2.7,0) (1.7,1.4) (1,1.87) (-1.1,1.4) (-1.8,0)(-1.1,-1.4)(1,-1.87)(2.7,0)};
\draw[dotted,cyan,thick, fill=white] plot [smooth, tension=0.4] coordinates{ (2.5,0) (1.7,1.3) (1,1.77) (-1,1.3) (-1.7,0)(-1,-1.3)(1,-1.77)(1.7,-1.3)(2.5,0)};

\draw[magenta,thin,fill=blue, fill opacity=0.1] plot [smooth, tension=1] coordinates{ (2.3,0)(1.1,1.32050)  (0.1,1.5)(-0.9,1.1) (-1.4,0) (-0.9,-1.1) (0.1,-1.5)(1.1,-1.32050) (2.3,0)};
\draw[gray]  (0,0) circle(2cm);

\end{tikzpicture}

\caption{An illustration of the process}
\end{center}
\end{figure}
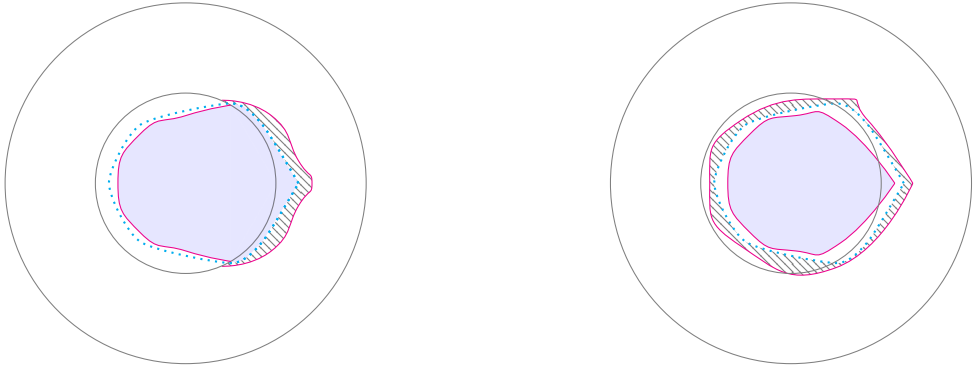
Finally, following the notations of step 3, we recall that $\mathbb A_{f({\delta_k})}$ is defined as 
$$\mathbb A_{f({\delta_k})}:=\left\{|x|\leq r^*-r_{f({\delta_k})}\right\}\cup \{r^*\leq |x|\leq r^*+r_{f({\delta_k})}'\}\,, \chi_{\mathbb A_{f({\delta_k})}}\in \mathcal M_{f({\delta_k})}.$$
Our competitor is $\mathbb A_{f({\delta_k})}$:

\begin{center}
\begin{tikzpicture}[scale=0.6]
\draw [gray] (0,0) circle(4cm);
\draw[magenta, pattern=north west lines, pattern color=gray] (0,0) circle(2.2cm);
\draw[dotted, cyan,thick,fill=white]  (0,0) circle(2cm);
\draw[magenta,fill=blue, fill opacity=0.1]  (0,0) circle(1.8cm);

\end{tikzpicture}\end{center}

Let $u_{\delta_k}^*$ be the Schwarz rearrangement of $u_{\delta_k}$. By equimeasurability of the Schwarz rearrangement, we have 
$$\int_{\mathbb A_{f({\delta_k})}}(u_{\delta_k}^*)^2=\int_\O \mathcal W_{\delta_k} u_{\delta_k}^2.$$ By the Polya-Szego Inequality (see \cite{Kawohl}), 
$$\int_\O |\n u_{\delta_k}^*|^2\leq \int_\O |\n u_{\delta_k}|^2.$$ Finally, we have established the chain of inequalities
\begin{align*}
\lambda(\mathcal V_{\delta_k})&\geq \int_\O |\n u_{\delta_k}|^2-\int_\O \mathcal W_{\delta_k} u_{\delta_k}^2
\\&\geq \int_\O |\n u_{\delta_k}^*|^2-\int_\O \chi_{\mathbb A_{f({\delta_k})}}(u_{\delta_k}^*)^2
\\&\geq \lambda\left(\mathbb A_{(f({\delta_k})}\right)&\text{ by the Rayleigh quotient formulation \eqref{Eq:Rayleigh}.}
\end{align*}
Now, by \eqref{Eq:RadialRate},
$$\lambda\left(\mathbb A_{(f({\delta_k})}\right)\geq \lambda_*+Cf({\delta_k})^2$$ and thus, since $$f({\delta_k})\underset{k\to \infty}\sim \ell {\delta_k}$$ we have 
$$\lambda(\mathcal V_{\delta_k})-\lambda_*\geq C'\ell^2 {\delta_k}^2$$ which  gives the required contradiction and concludes the proof.

\item \underline{Second case: comparison with a normal deformation}

The second case is defined by 
\begin{equation}\label{Eq:SecondCase}\frac{f({\delta_k})}{{\delta_k}}\underset{k\to \infty}\rightarrow 0.\end{equation} In this case, we use Step 4 of the proof, i.e the quantitative inequality for normal deformations of the ball.

Let us replace $\mathcal V_{\delta_k}$ with 
$$\mathcal W_{\delta_k}:=\chi_{\left\{u_{\delta_k}>\zeta_{\delta_k}\right\}}.$$
Recall that $\zeta_{\delta_k}$ was defined in such a way that $\mathcal W_{\delta_k}\in \mathcal M(\O)$. By the bathtub principle, 
$$\lambda(\mathcal V_{\delta_k})\geq \lambda(\mathcal W_{\delta_k}).$$Furthermore, Condition \eqref{Eq:SecondCase} implies
$$\left|\left\{u_{\delta_k}>\zeta_{\delta_k}\right\}\Delta \B^*\right|={\delta_k}+\underset{k\to \infty}o(\delta_k).$$ 
Indeed, 
$$\frac{{\delta_k}}2=\left|\left\{u_{\delta_k}>\eta_{\delta_k}\right\}\cap (\B^*)^c\right|=\left|\left\{u_{\delta_k}>\zeta_{\delta_k}\right\}\cap (\B^*)^c\right|+f({\delta_k}).$$
Finally, standard elliptic estimates imply (in dimension 2 and 3) that
\begin{equation}\label{Eq:CvHolder}u_{\delta_k}\underset{k\to \infty}{\overset{\mathscr C^{1,s}(\overline\O)}\rightarrow}u_*\end{equation} and, since $\frac{\partial u^*}{\partial\nu}|_{\partial \B^*}\neq 0$ and $\zeta_{\delta_k}\underset{k\to \infty}\rightarrow u_*|_{\partial \B^*}$ it follows that 
$\partial \left\{u_{\delta_k}>\zeta_{\delta_k}\right\}$ is a $\mathscr C^1$ hypersurface by the implicit function Theorem.

It remains to prove that $\partial \left\{u_{\delta_k}>\zeta_{\delta_k}\right\}$ is a graph above $\partial \B^*$. 

We start by noticing that \eqref{Eq:CvHolder} implies 
\begin{equation}\label{Eq:ConvHausdorff}d_H(\left\{u_{\delta_k}>\zeta_{\delta_k}\right\}\,, \B^*)\underset{k\to \infty}\rightarrow 0,\end{equation}where $d_H$ is the Hausdorff distance.
We  then argue by contradiction and assume that, a subsequence of $\{{\delta_k}\}_{k\in \N}$ there exists $x_{\delta_k}\in \partial \B^*\,, t_1\neq t_2\in \R$ such that
$$x_{\delta_k}+t_j \nu(x_{\delta_k})\in \partial \left\{u_{\delta_k}>\zeta_{\delta_k}\right\}\,, j=1,2.$$
It follows that 
$$u_\delta(x_{\delta_k}+t_1 \nu(x_{\delta_k}))=u_{\delta_k}(x_{\delta_k}+t_2\nu(x_{\delta_k}))$$ and  by the intermediate value Theorem and \eqref{Eq:ConvHausdorff}, there exists $t_{\delta_k}\in \R$ such that 
$$\langle \n u_\delta(x_{\delta_k}+t_{\delta_k} \nu(x_{\delta_k}))\,,\nu(x_{\delta_k})\rangle=0\,, t_{\delta_k} \underset{k\to \infty}\rightarrow 0.$$ By passing to the limit in this equation up to a subsequence, there exists a point $x_*\in \partial \B^*$ such that 
$$\langle \n u_*(x_*)\,,\nu (x_*)\rangle =0.$$ This is a contradiction since 
$$\left.\frac{\partial u_*}{\partial \nu}\right|_{\partial \B^*}\neq 0.$$
We can then say that $\partial  \left\{u_{\delta_k}>\zeta_{\delta_k}\right\}$ is the graph of a function $\varphi_{\delta_k}$ over $\partial \B^*$. Besides, the Convergence result \eqref{Eq:CvHolder} implies that 
$$\varphi_{\delta_k}\underset{k\to \infty}{\overset{\mathscr C^{1}(\partial \B^*)}\rightarrow }0.$$ Finally, since the set $\{u_{\delta_k}\geq \mu_{\delta_k}\}$ converges in the $\mathscr C^{1,s}$ topology to $\B^*$, there exists a uniform radius $r>0$ such that, for any $x \in \partial \{u_{\delta_k}\geq \mu_{\delta_k}\}$, there exists $y_x$ satisfying
$$x\in \mathbb B(y_x,r)\,, \B(y_x,r)\subset \{u_{\delta_k} \geq \mu_{\delta_k}\}.$$ Since $V_{\delta_k}$ is constant in $\B(y_x,r)$, we can apply elliptic regularity results to get a uniform $\mathscr C^2$ norm on $u_{\delta_k}$ in $\{u_{\delta_k}\geq \mu_{\delta_k}\}$: there exists $M>0$ such that, for any $x\in \{u_{\delta_k}\geq \mu_{\delta_k}\}$, $dist(x\,, \partial \{u_{\delta_k},\mu_{\delta_k}\})\leq r$, $|\n^2u(x)|\leq M$. Hence, the curvature of $\{u_{\delta_k}\geq \mu_{\delta_k}\}$ is uniformly bounded by some constant $M$.

To prove that the curvature of $\{u_{\delta_k}>\zeta_{\delta_k}\}$ is uniformly bounded as well,  we note the following fact: for any $x\in \partial \{u_{\delta_k}>\zeta_{\delta_k}\}$, let $x_{\delta_k}$ be its orthogonal projection on $\{u_{\delta_k} \geq \mu_{\delta_k}\}$.  Then, 
$$\mu_{\delta_k}-\zeta_{\delta_k}\underset{k\to \infty}\sim{|x_{\delta_k}-x|}{\frac{\partial u_{\delta_k}}{\partial \nu}(x_{\delta_k})}$$ and so the map 
$\partial \{u_{\delta_k}\geq \zeta_{\delta_k} \}\ni (x,y)\mapsto\frac{|x-x_{\delta_k}|}{|y-y_{\delta_k}|}$ converges uniformly to $1$. $\{u_{\delta_k}>\zeta_{\delta_k}\}$ can thus be described, asymptotically, as $\{u_{\delta_k}\geq \mu_{\delta_k}\}+\B(0;t_{\delta_k})$, so that it also has a uniformly bounded curvature.
\begin{remark}
We could have worked directly with $\{u_{\delta_k}\geq \mu_{\delta_k}\}+\B(0;t_{\delta_k})$, by choosing a suitable $t_{\delta_k}$ but in this context, it seemed more relevant to work with level sets.
\end{remark}

We can hence apply Step 4:
\begin{align*}
\lambda(\mathcal V_{\delta_k})&>\lambda(\mathcal W_{\delta_k})
\\&\geq \lambda_*+C\left|\left\{u_{\delta_k}>\zeta_{\delta_k}\right\}\Delta \B^*\right|^2\text{ by Step 4}
\\&\geq \lambda_*+C({\delta_k}+\underset{k\to \infty}o({\delta_k}))^2
\\&\geq \lambda_*+C'{\delta_k}^2.\end{align*}

This gives a contradiction.
\end{enumerate}
The proof of Theorem \ref{Th:Quanti} is now complete.
\begin{flushright}$\square$\end{flushright}
\color{black}

\section{Concluding remarks and conjecture}

\subsection{Extension to other domains}
We do believe that this quantitative inequality is valid not only in the ball but for more general domains. Let, for any domain $\O$, $V_\O$ be a solution of \eqref{Eq:PvG}. Let $u_\O$ be the associated eigenfunction. By the bathtub principle, it is easy to see that there exists $\mu_\O\in \R$ such that 
$$V_\O=\chi_{\{u_\O\geq \mu_\O\}}=\chi_{E_\O}.$$
We give the following conjecture:
\begin{conjecture}
Assume that 
\begin{enumerate}
\item The minimizer is regular in the sense that $\frac{\partial u_\O}{\partial \nu}\leq -C<0$ on $\partial E_\O$, 
\item $E_\O$ is a non-degenerate shape minimizer: for any admissible variation $\Phi\in \mathcal X_1(E_\O)$, if $L_\tau$ is the associated lagrangian, there holds
$$L_\tau''(E_\O)[\Phi,\Phi]>0.$$
\end{enumerate}
Then there exists a parameter $\eta>0$ such that, for any $V \in \mathcal M(\O)$, 
$$||V-V_\O||_{L^1(\O)}\leq \eta\Rightarrow \lambda(V)-\lambda(V^*)\geq C ||V-V^*||_{L^1(\O)}^2.$$
\end{conjecture}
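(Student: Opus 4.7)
My plan is to adapt the five-step architecture of the proof of Theorem~\ref{Th:Quanti} to the general setting, the two main differences being that (a) the radial analysis of Step~3 has to be replaced since $E_\Omega$ need not be a ball, and (b) the $L^2$-coercivity of the second-order shape derivative at the optimum is now assumed rather than proved. Steps~1 and~2 of the original proof transpose essentially verbatim: the truncated class $\mathcal{M}_\delta := \{V\in\mathcal{M}(\Omega)\,,\,\|V-V_\Omega\|_{L^1}=\delta\}$ is weakly-$*$ compact in $L^\infty$, the auxiliary minimization admits a solution $\mathcal{V}_\delta$, and a contradiction-and-compactness argument shows that it suffices to prove the conjecture for $\delta$ small.

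Second, I would reproduce the shape-derivative part (Step~4 of the main proof) at $E_\Omega$. The optimality of $V_\Omega$ gives $u_\Omega|_{\partial E_\Omega}\equiv\mu_\Omega$, so Lemma~\ref{Le:CriticalityBall} adapts verbatim with Lagrange multiplier $\tau=-\mu_\Omega^2$, and the formula of Lemma~\ref{Le:SecondDerivative} for $L''_\tau(E_\Omega)[\Phi,\Phi]$ is unchanged. The comparison-principle argument of Proposition~\ref{Le:SecondDerivativeBall} is replaced directly by assumption~(2), while assumption~(1) (lower bound on $|\nabla u_\Omega|$ at $\partial E_\Omega$) plays the role that Hopf's lemma played in the ball case: it ensures the level-set regularity and elliptic estimates needed to transfer the control of the remainder (Proposition~\ref{Pr:ControlRemainder}) from the ball to $E_\Omega$. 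The Taylor--Lagrange argument of Subsubsection~\ref{Susu:ConclStep4} then yields, for every $E$ which is a sufficiently $C^{1,s}$-small normal graph over $E_\Omega$ of bounded mean curvature, the \emph{normal-deformation inequality} $\lambda(E)-\lambda(V_\Omega)\geq C|E\Delta E_\Omega|^2$.

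The core of the proof is then a dichotomy à la Step~5, in which the radial case must be handled without symmetry. Given $V\in\mathcal{M}_\delta$ with $\delta$ small, introduce the bathtub-rearranged potential $\mathcal{W}:=\chi_{\{u_V\geq\zeta\}}$ with $\zeta$ chosen so that $\mathcal{W}\in\mathcal{M}(\Omega)$. The bathtub principle yields
$$\lambda(V)\geq\lambda(\mathcal{W})+\int_\Omega u_V^2(\mathcal{W}-V),$$
both terms being non-negative. The $L^1$-convergence $V\to V_\Omega$ gives $u_V\to u_\Omega$ in $C^{1,s}$, and (using assumption~(1)) the implicit function theorem ensures that, for $\delta$ small, $\{u_V=\zeta\}$ is a $C^1$-small normal graph over $\partial E_\Omega$ of uniformly bounded mean curvature. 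If $|\{u_V\geq\zeta\}\Delta E_\Omega|\geq c\delta$, the normal-deformation inequality applied to $\mathcal{W}$ gives $\lambda(\mathcal{W})-\lambda(V_\Omega)\geq C\delta^2$ and we conclude. Otherwise $\|V-\mathcal{W}\|_{L^1}\geq \delta/2$ and we directly quantify the bathtub gain by writing, after subtracting $\zeta^2\int(\mathcal{W}-V)=0$,
$$\int u_V^2(\mathcal{W}-V)=\int_{u_V\geq\zeta}(u_V^2-\zeta^2)(1-V)+\int_{u_V<\zeta}(\zeta^2-u_V^2)V,$$
and using the lower bound $|\nabla u_V|\geq c>0$ near $\{u_V=\zeta\}$ together with $V\leq 1$ to run a layer-cake argument: a mass $m=\|V-\mathcal{W}\|_{L^1}/2$ of unit-density material cannot all sit within distance $\ll m/\mathcal{H}^{n-1}(\{u_V=\zeta\})$ of the level set, whence $\int u_V^2(\mathcal{W}-V)\geq c m^2\geq c'\delta^2$.

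The main obstacle I anticipate is this last layer-cake estimate, which replaces the explicit annular computation of Step~3. Technically, one needs the lower bound $\int u_V^2(\mathcal{W}-V)\geq c\|V-\mathcal{W}\|_{L^1}^2$ to be uniform in $V$ in a small $L^1$-neighbourhood of $V_\Omega$; this requires that the geometry of $\{u_V=\zeta\}$ (perimeter, curvature, normal derivative of $u_V$) be controlled uniformly in $V$, which is exactly where assumption~(1) is crucial. If the direct bathtub estimate proves too weak, an alternative route is a full contradiction argument on a minimizing sequence $V_k\to V_\Omega$: extract a normalized limit of $h_k=(V_k-V_\Omega)/\|V_k-V_\Omega\|_{L^1}$ and use uniqueness of the principal eigenfunction together with the coercivity provided by assumption~(2) to force $h_k$ to align with a normal deformation of $E_\Omega$, thereby reducing the problem to the shape-derivative analysis of the second paragraph.
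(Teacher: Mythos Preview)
The statement you are attempting is labelled a \emph{conjecture} in the paper; there is no proof to compare against. The paragraph immediately following the conjecture explains the author's obstructions: the radial machinery of Step~3 and the Fourier diagonalisation of Step~4 are both specific to the centred ball. Your outline is a reasonable plan of attack and departs from the ball proof at the right place: where the paper's Step~5, Case~1, relies on Schwarz rearrangement and the Polya--Szego inequality to reduce to a radial annulus (a device with no analogue for a general $E_\Omega$), you propose instead to extract the quadratic gain directly from the bathtub remainder $\int_\Omega u_V^2(\mathcal W-V)$ via a layer-cake estimate. The paper's own discussion after the conjecture (establishing $\dot\lambda(V_\Omega)[h]\geq C\|h\|_{L^1}^2$) confirms that this works at the infinitesimal level, and your co-area sketch is in the same spirit.

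There is, however, a genuine gap in your adaptation of Step~4. Assumption~(2) of the conjecture asserts only \emph{strict positivity} $L_\tau''(E_\Omega)[\Phi,\Phi]>0$ for each admissible $\Phi$, whereas the Taylor--Lagrange argument of Subsubsection~\ref{Susu:ConclStep4} requires the uniform $L^2$-coercivity $L_\tau''(E_\Omega)[g,g]\geq C\|g\|_{L^2(\partial E_\Omega)}^2$. In the ball this coercivity is obtained via the comparison principle of Lemma~\ref{Le:OmegaPositivity} (giving $\omega_k\geq\omega_1>0$), not from abstract positivity; your sentence ``the comparison-principle argument of Proposition~\ref{Le:SecondDerivativeBall} is replaced directly by assumption~(2)'' therefore substitutes a weaker hypothesis for a strictly stronger conclusion. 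One can hope to recover coercivity by observing that $L_\tau''$ decomposes as an $L^2$-coercive principal part, namely $-2\int_{\partial E_\Omega}u_\Omega\,\partial_\nu u_\Omega\,g^2$ (coercive thanks to assumption~(1)), plus a compact perturbation coming from the trace of $u'$, so that pointwise positivity would imply a uniform lower bound by a Fredholm-type argument; but this step is absent from your sketch and is part of why the paper leaves the statement open.
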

Here the main difficulty lies not only in the quantitative inequality for normal perturbations of the domain (Step 4 of the proof of Theorem \ref{Th:Quanti}) but also in the quantitative inequality for possibly disconnected competitors (Step 3 of the proof). Indeed, since the parametric derivatives $\dot u_\O$ are no longer constant on the boundary of the set $E_\O$, the approach used in Step 3 might fail. We note, however, that the infinitesimal quantitative bound
$$\dot \lambda(V_\O)[h]\geq C ||h||_{L^1(\O)}^2$$ still holds. To see why, we notice that 
$$\dot \lambda(V_\O)[h]=-\int_\O hu_\O^2$$ and consider, for a parameter $\delta$, the solution $h_\delta$ of 
$$\min_{h\text{ admissible at $V_\O$}\,,||h||_{L^1(\O)}=\delta}\dot \lambda(V_\O)[h].$$
By the bathtub principle, $h_\delta$ can be written for any $\delta$ as a level set of $u_\delta$ and, for $\delta$ small enough, one can prove that $h_\delta$ writes as follows:
$$h_\delta=\chi_{E_\delta^+}-\chi_{E_\delta^-}\,,$$ where $E_\delta^+\subset E_\O^c$ and $E_\delta^-\subset E_\O$ and can be described as follows: if $\nu$ is the unit normal vector to $E_\O$,
$$E_\delta^\pm:=\{x\pm t \nu (x)\,, t\in (0;t_\pm^\delta(x))\}.$$
We can then prove that 
$$\frac{t_\pm^\delta}{\delta}\underset{\delta \to 0}\rightarrow f_\pm>0$$ uniformly in $x\in \partial E$. It remains to apply the methods of Step 3 of the Proof of Theorem \ref{Th:Quanti} and to do a Taylor expansion of $-u_\O^2$ at $\partial E_\O$ to get
$$\dot\lambda(V_\O)[h_\delta]\geq C \delta^2$$ for some constant $C$ that depends on $\inf \frac{\partial u_\O}{\partial \nu}$. Thus, the infinitesimal inequality seems valid. However, it seems complicated to go further using only this information, since the parametric derivatives $\dot u$ are no longer constant on $\partial E_\O$.
\subsection{Other constraints}
It would be relevant to consider perimeter constraints instead of volume constraints, but we expect the behaviour of the sequences of solutions to the auxiliary problems to be quite different. We nonetheless believe that the free boundary techniques used in \cite{BDPV} might apply directly to get regularity.


\newpage
\appendix

\section{Proof of Lemma \ref{Le:Schwarz}}\label{An:Schwarz}
We briefly recall that the Schwarz rearrangement of a function $u\in W^{1,2}_0(\B)$, $u\geq 0$ is defined as the only radially symmetric non-increasing function $u^*$ such that, for any $t\in \R$, 
$$\left|\{u\geq t\}\right|=\left|\{u^*\geq t\}\right|.$$ 
\begin{proof}[Proof of Lemma \ref{Le:Schwarz}]
We use the Polya-Szego Inequality for the Schwarz rearrangement: for any $u \in W^{1,2}_0(\B)\,, u\geq 0$,
$$\int_{\B}|\n u^*|^2\leq \int_\B |\n u^2|.$$ We also use the Hardy-Littlewood Inequality: for any $u,v\in L^2(\O)$,
$$\int_\B u^*v^*\geq \int_\B uv$$ and the equimeasurability of the rearrangement:
$$\int_\B u^2=\int_\B (u^*)^2.$$We refer to \cite{Kawohl} for proofs.
Using the Rayleigh quotient formulation \eqref{Eq:Rayleigh}, for any $V \in \mathcal M(\B)$,
\begin{align*}
\lambda(V)&=\frac{\int_\B |\n u_V|^2-\int_\B Vu_V^2}{\int_\B u_V^2}
\\&\geq \frac{\int_\B |\n u_V^*|^2-\int_\B V^*(u_V^*)^2}{\int_\B (u_V^*)^2}
\\&\geq \lambda(V^*).\end{align*}
This also proves that $u_{V^*}=u_{V^*}^*$. Since the eigenvalue is simple, the eigenfunction is radially symmetric. The fact that it is decreasing follows from the Equation satisfied by $u_{V^*}$ in polar coordinates.

\end{proof}

\section{Proof of the shape differentiability of $\lambda$}\label{An:Diff}

\subsection{Proof of the shape differentiability}\label{AA}
\begin{proof}[Proof of the shape differentiability]
Let $E$ be a regular subdomain of $\B$,  $(u_0,\lambda_0)$ be the eigenpair associated with $V:=\chi_E$, and let $\Phi$ be an admissible vector field at $E$. Let $T_{\Phi}:=(Id+ \Phi)$ and $E_{\Phi}^*:=T_{\Phi}(E)$. Let $u_{\Phi}$ be the eigenvalue associated with $V_\Phi:=\chi_{E_\Phi}$ and $\lambda_{\Phi}$ be the associated eigenvalue. If we introduce 
$$J_\O(\Phi):=\det(\n T_\Phi)\, , A_\Phi:=J_\O(\Phi)DT_{\Phi}^{-1}\left(DT_{\Phi}^{-1}\right)^t$$ then the weak formulation of the equation on $u_\Phi$ is: for any $v\in W^{1,2}_0(\O)$,
$$\int_\B  \langle A_\Phi\n u_\Phi,\n v\rangle=\lambda_{\Phi} \int_\B u_\Phi v J_\O({\Phi})+\int_\B V_\Phi u_\Phi v J_\O(\Phi).$$
We define the map $F$ in the following way:
$$
F:\left\{\begin{array}{ll}
W^{1,\infty}(\R^n,\R^n)\times W^{1,2}_0(\B)\times \R\to W^{-1,2}(\B)\times \R,&
\\(\Phi, v, \lambda)\mapsto \left(-\n \cdot (-\n \cdot(A_\Phi\n v)-\lambda v J_\O(\Phi)-V_\Phi v J_\O(\Phi), \int_\B v^2J_\O(\Phi)-1\right).&\end{array}
\right.$$
It is clear from the definition of the eigenvalue that 
$$F(0,u_{0},\lambda_{0})=0.$$
Furthermore, the same arguments as in \cite[Lemma 2.3]{DambrineKateb} show that $F$ is $\mathscr C^\infty$ in $B\times W^{1,2}_0(\O)\times \R$, where $B$ is an open ball centered at $\vec 0$.
\\The differential of $F$ at $(0,u_0,\lambda_0)$ is given by 
$$D_{v,\lambda}F(0,u_0,\lambda_0)[w,\mu]=\left(-\Delta w-\mu u_0-\lambda_0 w-Vw, \int_\B 2u_0w\right).$$
To prove that this differential is invertible, it suffices to show that, if $(z,k)\in W^{-1,2}(\O)\times \R$, then there exists a unique couple  $(w,\mu)$ such that 
$$D_{v,\lambda}F(0,u_0,\lambda_0)[w,\mu]=(z,k).$$
By the Fredholm alternative, we know that we must have
$$\mu=-\langle z,u_0\rangle.$$
There  exists a solution $w_1$ to the equation $$-\Delta w-\mu u_0-\lambda_0 w-m^*w=z.$$ We fix  such  a solution. Any other solution is of the form $w=w_1+tu_0$ for a real parameter $t$. We look for such a $t$. From the equation 
$$2\int_\O u_0w=k$$ there comes
$$t=\frac{k}2-\int_\O w_1u_0.$$
hence the couple $(w,\mu)$ is uniquely determined. From the implicit function theorem, the map ${\Phi}\mapsto (u_\Phi,\lambda_{\Phi})$ is $\mathscr C^\infty$  in a neighbourhood of $\vec 0$.
\end{proof}

\subsection{Computation of the first order shape derivative}\label{An:F}
\def\so{{}}
\begin{proof}[Proof of Lemma \ref{Le:SecondDerivative}]
\def\ue{{u_{\e,\alpha}}} 
Let $\Phi$ be a smooth vector field at $E$ and
$E_t=T_t(E)$ where $T_t:=Id+t\Phi$. We define $J_t:=J_\O(t\Phi)$ and $A_t:= A_{t\Phi}$. The other notations are the same as in the previous paragraph.
\\Let $(\lambda_t,u_t)$ be the eigencouple associated with $V_t:= \chi_{E_t}.$ We first define $$v_t:=u_t\circ T_t:\B\rightarrow \R.$$ The derivative of $v_t$ with respect to $t$ will be denoted $\dot u$. This is the material derivative, while we aim at computing the shape derivative $u'$ defined as
$$u'=\dot u+\langle \Phi ,\n u_0\rangle.$$ For more on these notions, we refer to \cite{HenrotPierre}.
\\Obivously $v_t \in W^{1,2}_0(\O)$. The weak formulation on $u_t$ writes:  for any  $\p \in W^{1,2}_0(\O)$, 
$$\int_{\B} \langle \n u_t,\n \p\rangle=\lambda_t \int_{\B} u_t \p+\int_{E_t} u_t \p.$$
We do the change of variables
$$x=T_t(y),$$ so that, for any test function $\p$, 
\begin{equation}\label{Eq:Poitiers}\int_{\B} \langle A_t \n v_t,\n \p\rangle=\lambda_t \int_{\B} v_t\p J_t+ \int_{E} v_t \p J_t.\end{equation}
 It is known that
$$\mathcal J(x):=\left.\frac{\partial J_t}{\partial t}\right|_{t=0}(t,x)=\n \cdot \Phi,$$ and that $$\mathcal A(x):=\left.\frac{\partial A_t}{\partial t}\right|_{t=0}(t,x)=(\n \cdot \Phi)I_n-\left(\n \Phi+(\n \Phi)^T\right).$$

We recall that $\mathcal A$ has the following property: if $\Phi_1$ and $\Phi_2$ are two vector fields, there holds
\begin{equation}\label{PrA}\langle \mathcal A \Phi_1,\Phi_2\rangle=\n \cdot\left(\langle \Phi_1,\Phi_2\rangle \Phi\right)-\langle \n (\Phi\cdot \Phi_1),\Phi_2\rangle-\langle \n (\Phi\cdot \Phi_2),\Phi_1\rangle.\end{equation} 
We differentiate  Equation \eqref{Eq:Poitiers} with respect to $t$ to get the following equation on $\dot u$:
\begin{equation}\label{Eq1}\int_{\B} \langle \n \p,\n \dot u+ \mathcal A \n u_0\rangle=\dot \lambda \int_{\B} u_0 \p+\lambda_0\int_{\B}\mathcal J u_0\p+\lambda_0\int_{\B}\dot u \p+ \int_{E } \dot u \p + \int_{E }\mathcal J(x) u_0\p.\end{equation}
Through Property \eqref{PrA} we get 
\begin{align*}
 \langle \mathcal A \n u_0,\n \p \rangle&= \n \cdot( \langle \n u_0,\n \p\rangle \Phi)- \langle \n (\langle \Phi,\n u_0\rangle),\n \p \rangle- \langle \n (\langle \Phi,\n \p\rangle),\n u_0 \rangle.
\end{align*}
We deal with these three terms separately: from the divergence Formula$$\int_{\B}  \n \cdot( \langle \n u_0,\n \p\rangle \Phi)=-\int_{\partial E}\left[\langle   \n u_0,\n \p\rangle \right]\langle \Phi,\nu\rangle.$$
\\We do not touch the second term.
\\The third term is dealt with using the weak equation on $u_0$:
\begin{align*}
\int_{\B} \langle \n (\langle \Phi,\n \p\rangle), \n u_0 \rangle&=\lambda_0\int_{\B} \langle \Phi,\n \p\rangle u_0+ \int_{E }u_0 \langle \Phi,\n \p\rangle-\int_{\partial E}\left[\langle   \n u_0,\n\p\rangle\right]\langle \Phi,\nu\rangle.
\end{align*}

Hence\begin{align*}
\int_{\B}\langle  \mathcal A \n u_0,\n \p \rangle&=\int_{\B}\so\n \cdot( \langle \n u_0,\n \p\rangle \Phi)-\so\langle \n (\langle \Phi,\n u_0\rangle),\n \p \rangle-\so\langle \n (\langle \Phi,\n \p\rangle),\n u_0 \rangle
\\&=-\int_{\B}\langle  \n (\langle \Phi,\n u_0\rangle),\n \p \rangle
\\&-\lambda_0\int_{\B} \langle \Phi,\n \p\rangle u_0- \int_{E }u_0 \langle \Phi,\n \p\rangle.
\end{align*}
The left hand term of \eqref{Eq1} becomes
\begin{align*}
\int_{\B} \langle \so\n \dot u+\so\mathcal A \n u_0,\n \p\rangle&=\int_{\B}\left\langle \so\n \Big(\dot u-\Phi\cdot \n u_0\Big),\n \p\right\rangle
\\&-\lambda_0\int_{\B} \langle \Phi,\n \p\rangle u_0- \int_{E }u_0 \langle \Phi,\p\rangle.
\end{align*}
Thus\begin{align*}
\int_{\B}\left\langle\so \n \Big(\dot u-\Phi\cdot \n u_0\Big),\n \p\right\rangle
-\lambda_0\int_{\B} \langle \Phi,\n \p\rangle u_0- \int_{E }u_0 \langle \Phi,\n\p\rangle
\\=\dot \lambda \int_{\B} u_0 \p+\lambda_0\int_{\B}\mathcal J(x) u_0\p+\lambda_0\int_{\B}\dot u \p+ \int_{E } \dot u \p + \int_{E }\mathcal J(x) u_0\p.
\end{align*}
By rearranging the terms, we get
\begin{align*}
\int_{\B}\left\langle \so\n \Big(\dot u-\Phi\cdot \n u_0\Big),\n \p\right\rangle
\\=
+\dot \lambda \int_{\B} u_0 \p+\lambda_0\left(\int_{\B}\mathcal J(x) u_0\p
+\int_{\B} \langle \Phi,\n \p\rangle u_0\right)\\+\lambda_0\int_{\B}\dot u \p+ \int_{E } \dot u \p + \int_{E }\mathcal J(x) u_0\p+ \int_{E }u_0\langle \Phi,\n \p\rangle.
\end{align*}
However, since $\mathcal J(x)=\n \cdot \Phi(x)$, we have
$$\mathcal J(x)\p +\langle \Phi,\n \p(x)\rangle=\n \cdot \left(\p \Phi\right).$$
Hence
\begin{align*}
\int_{\B}\mathcal J(x) u_0\p
+\int_{\B} \langle \Phi,\n \p\rangle u_0&=\int_{\B} \n \cdot(\Phi\p)u_0
\\&=-\int_{\B} \p \langle \Phi,\n u_0\rangle,
\end{align*}
because $u_0$ satisfies homogeneous Dirichlet boundary conditions. In the same way
\begin{align*}
 \int_{E }\mathcal J(x) u_0\p
+ \int_{E }\langle \Phi,\n \p\rangle u_0&=\int_{\B} \n \cdot(\Phi\p)u_0
\\&=- \int_{E } \p \langle \Phi,\n u_0\rangle+\int_{\partial E }\langle \Phi,\nu\rangle u_0\p.
\end{align*}
We turn back to the shape derivative; recall that it is  defined as
$$u':=\dot u-\langle \Phi,\n u\rangle.$$
The previous equation rewrites
\begin{align*}
\int_{\B} \langle\so \n u',\n \p\rangle=&\dot \lambda_0\int_{\B} u_0\p+\lambda_0 \int_{\B} u'\p+ \int_{E } u'\p
\\&+\int_{\partial E }\langle \Phi,\nu\rangle u_0\p
\end{align*}
Thus there appears that $u'$ solves
$$-\so \Delta u'=\lambda' u_{0}+\lambda_{0}u_{1}+Vu'{}$$ along with Dirichlet boundary conditions and 
$$\left[\so\frac{\partial u'}{\partial r}\right]=- \langle \Phi,\nu\rangle u_{0}.$$
Obtaining the jump condition on $u''$ is done in the same way as in \cite{DambrineKateb}.
\end{proof}

\subsection{G\^ateaux-differentiability of the eigenvalue}\label{Ap:Differentiability}
The parametric differentiability is also proved using the implicit function theorem applied to the following map:
$$
G:\left\{\begin{array}{ll}
L^\infty(\O)\times W^{1,2}_0(\O)\times \R\to W^{-1,2}(\O)\times \R,&
\\(h, v, \lambda)\mapsto \left(-\Delta v-\lambda v-(V+h)v, \int_\O v^2-1\right).&\end{array}
\right.$$The invertibility of the differential follows from the same arguments as the ones used to prove the invertibility of $DF$ in the previous section.

\section{Proof of Proposition \ref{Pr:ControlRemainder}}\label{An:Prop}

\begin{proof}[Proof of Proposition \ref{Pr:ControlRemainder}]
We can not apply in a straightforward manner the methods of \cite{DambrineLamboley}, which are well-suited for the proof of a convergence in the $H^{\frac12}$ topology. Some minor adjustments are in order.
\\Let us define $T_\Phi:=(Id+\Phi)$ and, for any function $f:\O\rightarrow \R$, 
$$\hat f:=f\circ T_\Phi.$$
We define the surface Jacobian
$$J_\Sigma(\Phi):=\det(\n T_\Phi)\left|\left( ^t\n T_\Phi^{-1}\right)\nu\right|,$$
the volume Jacobian
$$J_\Omega(\Phi):=\det\left(\n \Phi\right)$$ and, finally
$$A_\Phi:=J_\O(\Phi) (Id+\n \Phi)^{-1}(Id+ ^t\n \Phi)^{-1}.$$ 
It is known (see \cite[Lemma 4.8]{DambrineLamboley}) that
\begin{equation}\label{Eq:EstimateJacobian}
\left|\left| J_{\Omega/\Sigma}(\Phi)-1\right|\right|_{L^\infty}\leq C ||\Phi||_{W^{1,\infty}}\,, \left|\left| A_\Phi-1\right|\right|_{L^\infty}\leq C ||\Phi||_{W^{1,\infty}}.\end{equation} We define $u_0$ as the eigenfunction asociated with $\B^*$ and $u_0'$ the shape derivative of $u_0$ in the direction $\Phi$.
\\Finally let $u'_\Phi$ be the shape derivative in the direction $\Phi$ and  $\hat u'_\Phi:=u_\Phi'\circ T_\Phi$. Let $H_\Phi$ be the mean curvature of $\B_\Phi$. Using the change of variable $y=T_\Phi(x)$, the fact that $\Phi$ is normal to $\B^*$ and the value of the Lagrange multiplier $\tau$ given by \eqref{Eq:LM}, we get 
\begin{align*}
L_\tau''(\B_\Phi)[\Phi,\Phi]&=-2\int_{\partial \B^*}J_\Sigma(\Phi)\hat u_\Phi \hat u_\Phi'\langle \Phi,\nu\rangle+\int_{\partial \B^*}J_\Sigma(\Phi)\left(-\hat H {\hat u_\Phi}^2-2\hat u_\Phi\frac{\partial \hat u_\Phi}{\partial \nu}\right)\langle \Phi,\nu\rangle^2
\\&-\tau \int_{\partial \B^*} \hat H\langle \Phi,\nu\rangle^2
\\&=-2\int_{\partial \B^*}J_\Sigma(\Phi)\hat u_\Phi \hat u_\Phi'\langle \Phi,\nu\rangle+\int_{\partial \B^*}J_\Sigma(\Phi)\left(\hat H ({\hat u_\Phi}^2-u_0^2)-2\hat u_\Phi\frac{\partial \hat u_\Phi}{\partial \nu}\right)\langle \Phi,\nu\rangle^2.
\end{align*}
Hence we have 
\begin{align}
\begin{split}\label{Eq:DifferenceSeconde}
L_\tau''(\B_\Phi)[\Phi,\Phi]-L_\tau(\B^*)[\Phi,\Phi]&=-2\int_{\partial \B^*}(J_\Sigma(\Phi)\hat u_\Phi \hat u_\Phi'-u_0u_0')\langle \Phi,\nu\rangle
\\&+\int_{\partial \B^*}\left(\hat H (J_\Sigma(\Phi){\hat u_\Phi}^2-u_0^2)\right)\langle \Phi,\nu\rangle^2
\\&+\int_{\partial \B^*}\left(2u_0\frac{\partial u_0}{\partial \nu}-2J_\Sigma(\Phi)\hat u_\Phi\frac{\partial \hat u_\Phi}{\partial \nu}\right)\langle \Phi,\nu\rangle^2.
\end{split}
\end{align}
We will prove the Proposition using the following estimates
\begin{claim}\label{Cl:EstimateMain}For any $\eta>0$ there exists $\e>0$ such that, for any $\Phi$ satisfying 
$$||\Phi||_{\mathscr C^1}\leq \e$$ there holds
\begin{enumerate}
\item \begin{equation}\label{Eq:1}
||\hat u_\Phi-u_*||_{\mathscr C^1(\O)}\leq \eta,\end{equation}
\item 
 \begin{equation}\label{Eq:Controle}
||\hat u_\Phi'-u_0'||_{W^{1,2}_0}\leq \eta\left|\left| \langle \Phi,\nu\rangle\right|\right|_{L^2(\Sigma)}.\end{equation}\end{enumerate}
\end{claim}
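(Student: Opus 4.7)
My plan is to work on the fixed reference domain $\O$ by pulling back via $T_\Phi=Id+\Phi$. A change of variables $y=T_\Phi(x)$ in the weak formulation shows that $\hat u_\Phi$ satisfies, in the weak sense,
\begin{equation*}
-\nabla\cdot(A_\Phi\nabla \hat u_\Phi)=\lambda_\Phi J_\O(\Phi)\hat u_\Phi+V^*J_\O(\Phi)\hat u_\Phi\text{ in }\O,\quad \hat u_\Phi\in W^{1,2}_0(\O),\quad \int_\O \hat u_\Phi^2 J_\O(\Phi)=1,
\end{equation*}
where the crucial observation is that the pulled-back potential $\chi_{\B_\Phi}\circ T_\Phi=\chi_{\B^*}=V^*$ becomes the fixed reference potential. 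Differentiating the analogous pulled-back relation for the shape derivative produces the equation for $\hat u'_\Phi$, which differs from the one solved by $u_0'$ only by (i) replacement of $-\Delta$ by $-\nabla\cdot(A_\Phi\nabla\cdot)$, (ii) Jacobian-factor modifications $J_\O(\Phi)-1$ on the lower-order terms, (iii) substitutions $\lambda_*\mapsto\lambda_\Phi$, $\lambda_0'\mapsto\lambda_\Phi'$, $u_*\mapsto \hat u_\Phi$, and (iv) a jump condition $[\partial_\nu \hat u'_\Phi]|_{\partial \B^*}=-g\hat u_\Phi|_{\partial \B^*}$ up to surface-Jacobian corrections of order $\e$.

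For \eqref{Eq:1}, the shape differentiability proved in Appendix \ref{An:Diff} delivers $\lambda_\Phi\to \lambda_*$ as $\e\to 0$. Combining $\Vert A_\Phi-I_n\Vert_{L^\infty}+\Vert J_\O(\Phi)-1\Vert_{L^\infty}\leq C\e$ from \eqref{Eq:EstimateJacobian} with the conjugated equation, one sees that $\hat u_\Phi-u_*$ satisfies a uniformly elliptic equation whose right-hand side is of order $\e$ in $L^\infty$. Since $V^*$ is a bounded characteristic function with a single discontinuity across $\partial \B^*$, standard $L^p$-elliptic theory yields a uniform $W^{2,p}(\O\setminus \partial \B^*)$ bound on $\hat u_\Phi-u_*$ of size $O(\e)$. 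Sobolev embedding in dimension $n=2$ then produces $\mathscr C^{1,s}$ control on each side of $\partial \B^*$, and the transmission conditions (continuity of $\hat u_\Phi$ and of $\partial_\nu \hat u_\Phi$ across $\partial \B^*$) upgrade this to the global bound $\Vert \hat u_\Phi-u_*\Vert_{\mathscr C^1(\O)}\leq \eta$ provided $\e$ is sufficiently small.

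For \eqref{Eq:Controle}, set $w:=\hat u'_\Phi-u'_0$ and subtract the two equations. The source terms are $\nabla\cdot((A_\Phi-I_n)\nabla u'_0)$, the zero-order perturbations $(J_\O(\Phi)-1)(\lambda_*+V^*)u'_0$ and $(\lambda_\Phi J_\O(\Phi)-\lambda_*)\hat u'_\Phi$, and the $\lambda'$-contribution $\lambda_\Phi' J_\O(\Phi)\hat u_\Phi-\lambda_0' u_*$, while the jump is $[\partial_\nu w]|_{\partial\B^*}=-g(\hat u_\Phi-u_*)|_{\partial \B^*}$ up to $\e$-perturbations. Using Lemma \ref{Le:CriticalityBall}, which gives $\lambda_0'=0$ because $\B^*$ is a critical shape, together with the first-order shape derivative formula $\lambda'_\Phi=-\int_{\partial\B_\Phi}\langle\Phi,\nu_{\B_\Phi}\rangle u_\Phi^2$ and the Cauchy-Schwarz inequality, one gets $|\lambda'_\Phi|\leq C\Vert\Phi\Vert_{\mathscr C^1}\Vert g\Vert_{L^2}$. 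Combined with the baseline elliptic estimate $\Vert u'_0\Vert_{W^{1,2}}+\Vert \hat u'_\Phi\Vert_{W^{1,2}}\leq C\Vert g\Vert_{L^2(\partial \B^*)}$ applied to both transmission problems, every source term is then bounded in $W^{-1,2}(\O)$ by $C(\e+\eta)\Vert g\Vert_{L^2}$; the jump term contributes $\Vert g(\hat u_\Phi-u_*)\Vert_{L^2(\partial \B^*)}\leq C\eta\Vert g\Vert_{L^2}$ thanks to part (1). A standard transmission elliptic estimate for $w$, coupled with the compatibility condition $\int_\O \hat u_\Phi\hat u'_\Phi J_\O(\Phi)=0$ arising from differentiation of the normalization, yields $\Vert w\Vert_{W^{1,2}_0}\leq C(\e+\eta)\Vert g\Vert_{L^2}$, and shrinking $\e$ closes the estimate.

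The principal technical obstacle is the $\lambda'_\Phi$-term: the source carries a factor $\hat u_\Phi$ of order $1$, so the smallness must come entirely from $\lambda'_\Phi$ itself. To prove $\lambda'_\Phi$ is $O(\e)$ rather than $O(1)$ one crucially exploits that $\B^*$ is critical so that $\lambda'_0=0$, and one must quantify the rate at which shape criticality is lost as $\Phi$ grows; this requires the $\mathscr C^{1,s}$ convergence of part (1) as a genuine input to part (2), and it is precisely here that the coercivity analysis from Section \ref{Susu:D} is eventually invoked in the Taylor-Lagrange argument of Subsubsection \ref{Susu:ConclStep4}.
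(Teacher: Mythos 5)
Your overall architecture (pull everything back to $\O$ via $T_\Phi$, subtract the equations for $\hat u_\Phi'$ and $u_0'$, bound the sources, and use the criticality $\lambda_0'=0$ to make the $\lambda_\Phi'$-term small) is reasonable and close in spirit to the paper, which however organizes the estimate for \eqref{Eq:Controle} through a splitting $u'_\Phi=-\pi_\Phi H_\Phi+\xi_\Phi$ with $H_\Phi$ carrying the jump condition. The genuine gap is where you invoke a ``baseline elliptic estimate'' $\Vert \hat u_\Phi'\Vert_{W^{1,2}}\leq C\Vert g\Vert_{L^2}$ and then a ``standard transmission elliptic estimate'' for $w=\hat u_\Phi'-u_0'$. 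The operator governing both problems is (to leading order) $-\Delta-\lambda_*-V^*$, whose kernel contains $u_*$; no estimate of the form $\Vert w\Vert_{W^{1,2}_0}\leq C\Vert f\Vert_{W^{-1,2}}$ can hold for it, and the resolvent bound on $\langle u_*\rangle^\perp$ is not ``standard'': it requires a quantitative lower bound on $\lambda_2(V)-\lambda(V)$, uniform over the pulled-back potentials involved. This uniform spectral gap, together with the fact that $w$ is only \emph{approximately} orthogonal to $u_*$ (the normalizations give $\int_\O \hat u_\Phi \hat u_\Phi' J_\O(\Phi)=0$ and $\int_\O u_0'u_*=0$, not $\int_\O w\,u_*=0$, so the component of $w$ along $u_*$ must be estimated separately), is precisely the content that the paper has to supply and that your proposal assumes away.

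A second, related omission: even granting the spectral gap, testing the equation for $\hat u_\Phi'$ against itself produces the boundary term $\int_{\partial \B^*}J_\Sigma \hat u_\Phi\hat u_\Phi' g$, which trace continuity bounds by $C\Vert g\Vert_{L^2}\Vert \hat u_\Phi'\Vert_{W^{1,2}_0}$, while the spectral gap only controls $\Vert \hat u_\Phi'\Vert_{L^2}^2$ on the left. One therefore does not get the linear bound $\Vert\hat u_\Phi'\Vert_{W^{1,2}_0}\leq M\Vert g\Vert_{L^2}$ in one pass; the paper needs a two-step bootstrap (first $\Vert\hat u_\Phi'\Vert_{L^2}\lesssim \Vert g\Vert_{L^2}^{1/2}$, then reinjection into the weak formulation) to close it, and the same device is needed again for $w$. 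Your bound on $\lambda_\Phi'$ and your use of part (1) in the jump term are fine; the proof becomes complete once you add the uniform spectral gap, the treatment of the $u_*$-component of $w$ via the two normalization conditions, and the bootstrap.
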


\begin{proof}[Proof of Claim \ref{Cl:EstimateMain}]
Estimate \eqref{Eq:1} follows from a simple contradiction argument and by using the fact that, if a sequence $\{\Phi_k\}_{k\in \N}$ converges in the $\mathscr C^1$ norm to 0, then $u_{\Phi_k}$ converges, in every $\mathscr C^{1,s}(\O)$ ($s<1$) to $u_0$.
\def\NormePhi{{\left|\left| \langle \Phi,\nu\rangle\right|\right|_{L^2(\partial \O)}}}
To prove \eqref{Eq:Controle}, we first prove that there exists a constant $M$ such that 
\begin{equation}\label{Eq:Deri}
||\hat u_\Phi'||_{W^{1,2}_0}\leq M\left|\left| \langle \Phi,\nu\rangle\right|\right|_{L^2(\Sigma)}.\end{equation}
 By the change of variable $y:=T_\Phi(x)$, we see that $\hat u_\Phi'$ satisfies
\begin{equation}\label{Marre}-\n \cdot \Big(A_\Phi \n \hat u_\Phi'\Big)=J_\O(\Phi)\left(\lambda_\Phi \hat u_\Phi' +(V^*) \hat u_\Phi'+\lambda_\Phi' \hat u_\Phi\right)\,, [A_\Phi \partial_\nu \hat u_\Phi']=-J_\Sigma\langle \Phi,\nu\rangle \hat u_\Phi\end{equation} with homogeneous Dirichlet boundary conditions. The orthogonality conditions gives 
$$\int_\O J_\O(\Phi)\hat u_\Phi\hat u_\Phi'=0$$ and we will use a Spectral Gap Estimate \eqref{Eq:SpectralGap}  combined with a bootstrap argument.
\paragraph{Spectral gap estimate}
For any $V\in \mathcal M(\B)$, $\lambda(V)$ was defined as the first eigenvalue of  the operator $\mathcal L_V$ defined in \eqref{Eq:Operator}. We recalled in the Introduction that this eigenvalue is simple. Let, for any $V\in \mathcal M(\O)$, $\lambda_2(V)>\lambda(V)$ and $u_{2,V}$ be the second eigenvalue and an associated eigenfunction (we choose a $L^2$ normalization). We claim there exists $\omega>0$ such that, for any $V\in \mathcal M(\O)$,
\begin{equation}\label{Eq:SG}\omega\leq \lambda_2(V)-\lambda(V).\end{equation} 
To prove this, we use a direct argument. Let $S(V):=\lambda_2(V)-\lambda(V)$ be the spectral gap associated with $V$. We consider a minimizing sequence $\{V_k\}_{k\in \N}\in \tilde{ \mathcal M}(\O)$ (the radiality assumption is not necessary here) which, up to a subsequence, converges weakly in $L^\infty-*$ to some $V_\infty \in \tilde{\mathcal M}(\O).$  It is standard to see that $$\lambda(V_k)\underset{k\to\infty}\rightarrow \lambda(V_\infty)\,, u_{V_k}\underset{k\to \infty}\rightarrow u_{V_\infty} \text{ strongly in $L^2(\B)$, weakly in $W^{1,2}_0(\B)$.}$$ The only part which is not completely classical is to prove that 
\begin{equation}\label{Eq:ConvergenceLambda2}
\lambda_2(V_k)\underset{k\to \infty}\rightarrow \lambda_2(V_\infty).\end{equation}
However, for any $k\in \N$, $\lambda_2(V_k)$ is defined as 
\begin{equation}\label{Eq:Rayleigh2}\lambda_2(V_k)=\min_{u\in W^{1,2}_0(\B)\,,\int_\B u^2=1\,u\in \langle u_{V_k}\rangle^\perp} R_{V_k}[u],\end{equation}  where $\langle u\rangle ^\perp$ is the subspace of functions that are $L^2$-orthogonal to $u$, and $u_{2,V_k}$ is defined as a minimizer for this problem (there a possibly multiple eigenfunctions). In the same way we proved that $\lambda(V)$ is uniformly bounded in $V$, one proves that $\lambda_2(V)$ is uniformly bounded in$V$. Let $\lambda_{2,\infty}$ be such that 
$$\lambda_2(V_k)\underset{k\to \infty}\rightarrow \lambda_{2,\infty}.$$ 
Standard elliptic estimates prove that there exists a function $u_{2,\infty}\in W^{1,2}_0(\O)$ such that 
$u_{2,V_k}\underset{k\to \infty}\rightarrow u_{2,\infty}$ strongly in $L^2(\B)$ and weakly in $W^{1,2}_0(\B)$. Passing to the limit in
$$\int_\B u_{2,V_k}u_{V_k}=0$$ gives 
\begin{equation}\label{Orthogona}\int_\B u_{2,\infty}u_{V_\infty}=0.\end{equation} Passing to the limit in the weak formulation of the equation on $u_{2,V_k}$ proves that $u_{2,\infty}$ is an eigenfunction of $\mathcal L_{V_\infty}$ associated with $\lambda_{2,\infty}$. It follows from the orthogonality relation \eqref{Orthogona} that 
$$\lambda_{2,\infty}\geq \lambda_2(V_\infty).$$ Hence
$$\underset{k\to \infty}{\lim\inf}\text{ }S(V_k)\geq \lambda_2(V_\infty)-\lambda(V_\infty)\geq \omega_1>0$$ because $\lambda(V_\infty)$ is a simple eigenvalue.
\\As a consequence of the spectral gap estimate \eqref{Eq:SG}, we get the following estimate:
\begin{equation}\label{Eq:SpectralGap}
\forall V \in \mathcal M(\O)\,, \forall u\in \langle u_V\rangle^\perp\,,\omega \int_\O u^2\leq \int_\O |\n u|^2-\int_\O Vu^2-\lambda(V)\int_\B u^2.\end{equation}
Indeed, let $V \in \mathcal M(\O)$ and $u\in \langle u_V\rangle^\perp, u\neq 0$. Then, by the Rayleigh quotient formulation on $\lambda_2(V)$, see Equation \eqref{Eq:Rayleigh2},  
$$\int_\B |\n u|^2-\int_\B Vu^2\geq \lambda_2(V)\int_\B u^2\geq \omega \int_\B u^2+\lambda(V)\int_\B u^2\text{ by \eqref{Eq:SG}},$$ which is exactly the desired conclusion.

\paragraph{Proof of \eqref{Eq:Deri}} First of all, multiplying \eqref{Marre} by $\hat u_\Phi'$ and integrating by parts gives
$$\int_\O A_\Phi|\n \hat u_\Phi'|^2-\int_\O V^* J_\O(\Phi)(\hat u_\Phi')^2=\int_{\partial \B^*}J_\Sigma \hat u_\Phi \hat u_\Phi'\langle V,\nu\rangle.$$
By the Spectral gap estimate, using the fact that eigenfunctions are uniformly bounded  and by continuity of the trace operator we get the existence of a constant $M$ such that
\begin{equation}\label{Eq:Esquire}\int_\O (\hat u_\Phi')^2\leq M||\langle \Phi,\nu\rangle||_{L^2(\partial \B^*)} ||\hat u_\Phi'||_{W^{1,2}_0(\O)}.\end{equation}
We rewrite 
$$||\hat u_\Phi'||_{W^{1,2}_0(\O)}=||\hat u_\Phi'||_{L^2(\O)}+||\n \hat u_\Phi'||_{L^2(\O)}.$$
By the shape differentiability of $E\mapsto (\lambda(E),u_E)$, there exists 
$C$ such that
$$||\n \hat u_\Phi'||_{L^2(\O)}\leq C$$ for any $\Phi$ such that $||\Phi||_{W^{1,\infty}}\leq 1$. 
\\We then let $X:=||\hat u_\Phi'||_{L^2(\O)}$. The estimate \eqref{Eq:Esquire} rewrites
$$X^2\leq M\NormePhi X+MC\NormePhi,$$ from where it follows that there exists $M>0$ such that
$$||\hat u_\Phi'||_{L^2(\O)}\leq M\sqrt{\NormePhi}.$$
We now multiply \eqref{Eq:Deri} by  $\hat u_\Phi'$ and integrate by part. Using the continuitiy of the trace operator, this gives, for some constant $M$,
$$\int_\O |\n \hat u_\Phi'|^2\leq C \NormePhi+\NormePhi||\n \hat u_\Phi'||_{L^2(\O)}+\NormePhi^{\frac32}$$ which in turn yields, using the same arguments, 
$$||\n \hat u_\Phi'||_{L^2(\O)}\leq \sqrt{\NormePhi}.$$ We use this in \eqref{Eq:Esquire}, giving
$$||\hat u_\Phi'||_{L^2(\O)}^2\leq M\left(\NormePhi||\hat u_\Phi'||_{L^2(\O)}+\NormePhi^{\frac32}\right).$$
This yields
$$||\hat u_\Phi'||_{L^2(\O)}\leq M\NormePhi$$ and, finally, from the weak formulation of the equation, 
$$||\n \hat u_\Phi'||_{L^2(\O)}\leq M\NormePhi.$$
\paragraph{Proof of \eqref{Eq:Controle}}
We now turn to the proof of the continuity estimate \eqref{Eq:Controle}, for which we will apply the same kind of bootstrap arguments, combined with a version of the splitting method, see \cite[Lemma 4.10]{DambrineLamboley}.
\\ Let us define $H_\Phi$ as the solution of 
\begin{equation}\label{Eq:Splitting1} 
\left\{
\begin{array}{ll}
-\Delta H_\Phi={V_\Phi} H_\Phi,&
\\\left[\frac{\partial H_\Phi}{\partial \nu}\right]=u_\Phi\langle \Phi,\nu_\Phi\rangle\text{ on }\partial E_t,&
\\H_\Phi=0\text{ on }\partial \O.\end{array}\right.\end{equation}
Then it appears that
$$\lambda_\Phi'=-\int_{\partial E_t} u_\Phi^2\langle \Phi,\nu_\Phi\rangle=\lambda_\Phi \int_\O H_\Phi u_\Phi.$$
We can  prove using the same bootstrap arguments used to prove \eqref{Eq:Deri} that 
$$||\hat H_\Phi||_{W^{1,2}_0(\O)}\leq C\NormePhi.$$ Indeed, multiplying \eqref{Eq:Splitting1} by $H_\Phi$, doing a change of variables and integrating by parts gives
$$\int_\O A_t|\n \hat H_\Phi|^2-\int_\O J_t \hat H_\Phi^2\leq \NormePhi||\hat H_\Phi||_{W^{1,2}_0(\O)}$$ and, by the variational formulation of the eigenvalue, 
$$\int_\O \hat H_\Phi^2\leq \NormePhi ||\hat H_\Phi||_{W^{1,2}_0(\O)}.$$ We then use the same bootstrap argument: we first prove that this implies $||\hat H_\Phi||_{L^2(\O)}\leq M\sqrt{\NormePhi}$ and plug this estimate in the weak formulation of the equation. The conclusion follows.
\\We turn back to \eqref{Eq:Controle}.
\\Let $\pi_\Phi$ be the orthogonal projection on $\langle u_\Phi\rangle^\perp$.  We decompose $u_\Phi'$ as 
$$u_\Phi'=-\pi_\Phi H_\Phi+\xi_\Phi$$ where $\xi_\Phi$ solves
\begin{equation}\label{Eq:Splitting1}
\left\{
\begin{array}{ll}
-\Delta \xi_\Phi=\lambda_\Phi\xi_\Phi+{V_\Phi}\xi_\Phi-\lambda_\Phi\pi_\Phi H_\Phi,&
\\\xi_\Phi=0\text{ on }\partial \O.
\\\int_\O \xi_\Phi u_\Phi=0.\end{array}\right.\end{equation}
Thanks to the Fredholm alternative, such a $\xi_\Phi$ exists and is uniquely defined. 
\\We now prove that 
\begin{equation}\label{Eq:Ht}
\left|\left| \hat H_\Phi-H_0\right|\right|_{W^{1,2}_0(\O)}\leq M\eta \NormePhi\end{equation} for $||\Phi||_{\mathscr C^1}$ small enough. To that end, we define 
$$\mathfrak H_\Phi:=\hat H_\Phi-H_0.$$ Direct computation shows that 
$$-\Delta \mathfrak H_\Phi=(V^*) \mathfrak H_\Phi+(V^*)\hat H_\Phi (J_t-1)+\n \cdot\left((A_t-Id)\n \hat H_\Phi\right)$$ along with Dirichlet boundary conditions and 
$$\left[\frac{\partial \mathfrak H_\Phi}{\partial \nu}\right]=(u_0-J_\Sigma \hat u_\Phi) \langle \Phi,\nu\rangle+\left[\langle(Id-A_t)\n \hat H_\Phi\,, \nu\rangle\right].$$ We proceed in the same fashion: we first multiply the equation on $\mathfrak H_\Phi$ by $\mathfrak H_\Phi$, integrate by parts and use the variational formulation of the eigenvalue to get 
\begin{align*}
||\mathfrak H_\Phi||_{L^2(\O)}^2&\leq ||J_t-1||_{L^\infty}||\hat H_\Phi||_{L^2(\O)}+||A_t-Id||_{L^\infty(\O)}||\n \hat H_\Phi||_{L^2(\O)}||\n \mathfrak H_\Phi||_{L^2(\O)}
\\&+\NormePhi||\mathfrak H_\Phi||_{W^{1,2}_0(\O)}||u_0-J_\Sigma\hat u_\Phi||_{L^\infty(\partial \O)}\end{align*} up to a multiplicative constant. This first gives, using \eqref{Eq:1},
$$||\mathfrak H_\Phi||_{L^2(\O)}\leq M\sqrt{\NormePhi(||\n \Phi||_{L^\infty}+\eta)}.$$ We then apply the same bootstrap method to get the desired conclusion.
\\Finally, we need to show the following estimate, which will conclude the proof:
\begin{equation}\label{Eq:ControleSplitting2}
||\hat \xi_\Phi-\xi_0||_{W^{1,2}_0(\O)}\leq\eta \NormePhi
\end{equation}
for $||\Phi||_{\mathscr C^1}$ small enough. However, this follows from the same arguments as in \cite[Lemma 4.10, Paragraph 3 of the proof]{DambrineLamboley} and from the  bootstrap strategy already used.
\end{proof}
Finally, going back to \eqref{Eq:DifferenceSeconde}, it suffices to use the continuity of the trace to control the terms involving $u_\Phi'$ and Estimates \eqref{Eq:1}-\eqref{Eq:Controle} to conclude the proof of Proposition \ref{Pr:ControlRemainder}.
\end{proof}
\newpage

\bibliographystyle{abbrv}
\bibliography{biblio}

\begin{thebibliography}{10}

\bibitem{AcerbiFuscoMoreni}
E.~Acerbi, N.~Fusco, and M.~Morini.
\newblock Minimality via second variation for a nonlocal isoperimetric problem.
\newblock {\em Communications in Mathematical Physics}, 322(2):515--557, Sep
  2013.

\bibitem{Adams}
R.~A. Adams and J.~J. Fournier.
\newblock {\em Sobolev Spaces}.
\newblock Academic Press, 1975.

\bibitem{AlvinoTrombettiLions}
A.~Alvino, P.-L. Lions, and G.~Trombetti.
\newblock Comparison results for elliptic and parabolic equations via
  symmetrization: a new approach.
\newblock {\em Differential Integral Equations}, 4(1):25--50, 1991.

\bibitem{AlvinoTrombetti}
A.~Alvino and G.~Trombetti.
\newblock A lower bound for the first eigenvalue of an elliptic operator.
\newblock {\em Journal of Mathematical Analysis and Applications}, 94(2):328 --
  337, 1983.

\bibitem{Belgacem}
F.~Belgacem.
\newblock {\em Elliptic Boundary Value Problems with Indefinite Weights,
  Variational Formulations of the Principal Eigenvalue, and Applications}.
\newblock Chapman \& Hall/CRC Research Notes in Mathematics Series. Taylor \&
  Francis, 1997.

\bibitem{BelgacemCosner}
F.~Belgacem and C.~Cosner.
\newblock The effect of dispersal along environmental gradients on the dynamics
  of populations in heterogeneous environment.
\newblock {\em Canadian Applied Mathematics Quarterly}, 3:379--397, 01 1995.

\bibitem{BHR}
H.~Berestycki, F.~Hamel, and L.~Roques.
\newblock Analysis of the periodically fragmented environment model : I --
  species persistence.
\newblock {\em Journal of Mathematical Biology}, 51(1):75--113, 2005.

\bibitem{BLR}
H.~Berestycki and T.~Lachand-Robert.
\newblock Some properties of monotone rearrangement with applications to
  elliptic equations in cylinders.
\newblock {\em Mathematische Nachrichten}, 266(1):3--19, mar 2004.

\bibitem{Bossel}
M.-H. Bossel.
\newblock Membranes {\'e}lastiquement li{\'e}es inhomog{\`e}nes ou sur une
  surface: Une nouvelle extension du th{\'e}or{\`e}me isop{\'e}rim{\'e}trique
  de rayleigh-faber-krahn.
\newblock {\em Zeitschrift f{\"u}r angewandte Mathematik und Physik ZAMP},
  39(5):733--742, Sep 1988.

\bibitem{BrascoButtazzo}
L.~Brasco and G.~Buttazzo.
\newblock Improved energy bounds for schr\"{o}dinger operators.
\newblock {\em Calculus of Variations and Partial Differential Equations},
  53(3-4):977--1014, Sept. 2014.

\bibitem{BrascoDePhilippis}
L.~Brasco and G.~De~Philippis.
\newblock {\em Shape optimization and spectral theory (Dir. A. Henrot)},
  chapter Spectral inequalities in quantitative form.
\newblock De Gruyter, 2017.

\bibitem{BDPV}
L.~Brasco, G.~De~Philippis, and B.~Velichkov.
\newblock Faber--krahn inequalities in sharp quantitative form.
\newblock {\em Duke Math. J.}, 164(9):1777--1831, 06 2015.

\bibitem{BFNT}
D.~Bucur, V.~Ferone, C.~Nitsch, and C.~Trombetti.
\newblock The quantitative faber-krahn inequality for the robin laplacian.
\newblock {\em Journal of Differential Equations}, 11 2016.

\bibitem{Buttazzo2014}
G.~Buttazzo, A.~Gerolin, B.~Ruffini, and B.~Velichkov.
\newblock Optimal potentials for schr\"{o}dinger~operators.
\newblock {\em Journal de l'{\'{E}}cole polytechnique {\textemdash}
  Math{\'{e}}matiques}, 1:71--100, 2014.

\bibitem{CantrellCosner1}
R.~S. Cantrell and C.~Cosner.
\newblock Diffusive logistic equations with indefinite weights: Population
  models in disrupted environments {II}.
\newblock {\em {SIAM} Journal on Mathematical Analysis}, 22(4):1043--1064, jul
  1991.

\bibitem{CarlenLieb}
E.~A. Carlen, R.~L. Frank, and E.~H. Lieb.
\newblock Stability estimates for the lowest eigenvalue of a schr\"{o}dinger
  operator.
\newblock {\em Geometric and Functional Analysis}, 24(1):63--84, Feb. 2014.

\bibitem{CaubetDeheuvelsPrivat}
F.~Caubet, T.~Deheuvels, and Y.~Privat.
\newblock {Optimal location of resources for biased movement of species: the 1D
  case}.
\newblock {\em {SIAM Journal on Applied Mathematics}}, 77(6):1876--1903, 2017.

\bibitem{CianchiEspositoFusco}
A.~Cianchi, L.~Esposito, N.~Fusco, and C.~Trombetti.
\newblock A quantitative polya?szego principle.
\newblock {\em J. Reine Angew. Math}.

\bibitem{Cosner2003}
C.~Cosner and Y.~Lou.
\newblock Does movement toward better environments always benefit a population?
\newblock {\em Journal of Mathematical Analysis and Applications},
  277(2):489--503, jan 2003.

\bibitem{Dambrine}
M.~Dambrine.
\newblock On variations of the shape hessian and sufficient conditions for the
  stability of critical shapes.
\newblock {\em Revista de la Real Academia de Ciencias Exactas, Fisicas y
  Naturales - Serie A: Matematicas}, 96:95--122, 01 2002.

\bibitem{DambrineKateb}
M.~Dambrine and D.~Kateb.
\newblock On the shape sensitivity of the first dirichlet eigenvalue for
  two-phase problems.
\newblock {\em Applied Mathematics {\&} Optimization}, 63(1):45--74, jul 2010.

\bibitem{DambrineLamboley}
M.~Dambrine and J.~Lamboley.
\newblock Stability in shape optimization with second variation.
\newblock {\em Journal of Differential Equations}, Apr. 2019.

\bibitem{DambrinePierre}
M.~Dambrine and M.~Pierre.
\newblock About stability of equilibrium shapes.
\newblock {\em {ESAIM}: Mathematical Modelling and Numerical Analysis},
  34(4):811--834, July 2000.

\bibitem{Daners}
D.~Daners.
\newblock A faber-krahn inequality for robin problems in any space dimension.
\newblock {\em Mathematische Annalen}, 335(4):767--785, Aug 2006.

\bibitem{Faber}
G.~Faber.
\newblock Beweis, dass unter allen homogenen membranen von gleicher fl\"{a}che
  und gleicher spannung die kreisf\"{o}rmige den tiefsten grundton gibt.
\newblock {\em Sitzungsberichte der mathematisch-physikalischen Klasse der
  Bauerischen Akademie der Wissenschaften zu M\"{u}nchen Jahrgang}, 1923.

\bibitem{FeroneVolpicelli}
A.~Ferone and R.~Volpicelli.
\newblock Minimal rearrangements of sobolev functions: a new proof.
\newblock {\em Annales de l'Institut Henri Poincare (C) Non Linear Analysis},
  20(2):333--339, mar 2003.

\bibitem{Fisher}
R.~A. Fisher.
\newblock The wave of advances of advantageous genes.
\newblock {\em Annals of Eugenics}, 7(4):355--369, 1937.

\bibitem{GilbargTrudinger}
D.~Gilbarg and N.~S. Trudinger.
\newblock {\em Elliptic Partial Differential Equations of Second Order}.
\newblock Springer Berlin Heidelberg, 1983.

\bibitem{Hamel2011}
F.~Hamel, N.~Nadirashvili, and E.~Russ.
\newblock Rearrangement inequalities and applications to isoperimetric problems
  for eigenvalues.
\newblock {\em Annals of Mathematics}, 174(2):647--755, sep 2011.

\bibitem{NadirashviliHansen}
W.~Hansen and N.~Nadirashvili.
\newblock Isoperimetric inequalities in potential theory.
\newblock {\em Potential Analysis}, 3(1):1--14, Mar 1994.

\bibitem{Henrot2006}
A.~Henrot.
\newblock {\em Extremum Problems for Eigenvalues of Elliptic Operators}.
\newblock Birkh\"{a}user Basel, 2006.

\bibitem{HenrotMazariPrivat}
A.~Henrot, I.~Mazari, and Y.~Privat.
\newblock Shape optimization of the dirichlet energy for semilinear elliptic
  partial differential equations.
\newblock Preprint.

\bibitem{HenrotPierre}
A.~Henrot and M.~Pierre.
\newblock {\em Shape Variation and Optimization}.
\newblock European Mathematical Society Publishing House, feb 2018.

\bibitem{KaoLouYanagida}
C.-Y. Kao, Y.~Lou, and E.~Yanagida.
\newblock Principal eigenvalue for an elliptic problem with indefinite weight
  on cylindrical domains.
\newblock {\em Mathematical biosciences and engineering : MBE}, 5 2:315--35,
  2008.

\bibitem{Kawohl}
B.~Kawohl.
\newblock {\em Rearrangements and Convexity of Level Sets in {PDE}}.
\newblock Springer Berlin Heidelberg, 1985.

\bibitem{KPP}
A.~Kolmogoroff, I.~Petrovsky, and N.~Piscounoff.
\newblock \'etudes de l'\'equation avec croissance de la quantit\'e de
  mati\`ere et son application \`a un probl\`eme biologique.
\newblock {\em Moscow University Bulletin Of Mathematics}, 1:1--25, 01 1937.

\bibitem{Krahn}
E.~Krahn.
\newblock {\"U}ber eine von rayleigh formulierte minimaleigenschaft des
  kreises.
\newblock {\em Mathematische Annalen}, 1925.

\bibitem{LamboleyLaurainNadinPrivat}
J.~Lamboley, A.~Laurain, G.~Nadin, and Y.~Privat.
\newblock {Properties of optimizers of the principal eigenvalue with indefinite
  weight and Robin conditions}.
\newblock {\em {Calculus of Variations and Partial Differential Equations}},
  55(6), Dec. 2016.

\bibitem{Lou2008}
Y.~Lou.
\newblock {\em Some Challenging Mathematical Problems in Evolution of Dispersal
  and Population Dynamics}, pages 171--205.
\newblock Springer Berlin Heidelberg, Berlin, Heidelberg, 2008.

\bibitem{Lou2006}
Y.~Lou and E.~Yanagida.
\newblock Minimization of the principal eigenvalue for an elliptic boundary
  value problem with indefinite weight, and applications to population
  dynamics.
\newblock {\em Japan Journal of Industrial and Applied Mathematics}, 23(3):275,
  Oct 2006.

\bibitem{LouYanagida}
Y.~Lou and E.~Yanagida.
\newblock Minimization of the principal eigenvalue for an elliptic boundary
  value problem with indefinite weight, and applications to population
  dynamics.
\newblock {\em Japan J. Indust. Appl. Math.}, 23(3):275--292, 10 2006.

\bibitem{MazariNadinPrivat}
I.~Mazari, G.~Nadin, and Y.~Privat.
\newblock Optimization of a two-phase, weighted eigenvalue with dirichlet
  boundary conditions.
\newblock Preprint, 2019.

\bibitem{Melas}
A.~D. Melas.
\newblock The stability of some eigenvalue estimates.
\newblock {\em J. Differential Geom.}, 36(1):19--33, 1992.

\bibitem{MignotMuratPuel}
F.~Mignot, J.~Puel, and F.~Murat.
\newblock Variation d'un point de retournement par rapport au domaine.
\newblock {\em Communications in Partial Differential Equations},
  4(11):1263--1297, 1979.

\bibitem{Skellam}
J.~G. Skellam.
\newblock Random dispersal in theoretical populations.
\newblock {\em Biometrika}, 38(1-2):196--218, 06 1951.

\end{thebibliography}
\paragraph{Acknowledgment.}
I. Mazari was partially supported by the Project ''Analysis and simulation of optimal shapes - application to lifesciences'' of the Paris City Hall and by the ANR Project "Optimisation de forme - SHAPO".

The author would like to thank D. Bucur and J. Lamboley for the useful discussions they had with him, and express his gratitude to D. Ruiz-Balet for his help in obtaining the numerical simulations of Remark 3.

The author would also like to warmly thank the anonymous referee for his or her insightful comments and advice.

\end{document}